\newtheorem{thrm}{Theorem}[section]
\newtheorem{prop}[thrm]{Proposition}
\newtheorem{cor}[thrm]{Corollary}
\theoremstyle{definition}
\newtheorem{remark}[thrm]{Remark}
\numberwithin{equation}{section}
\newcommand{\mc}[1]{\mathcal{#1}}
\newcommand{\e}[1]{\emph{#1}}
\newcommand{\la}{\langle}
\newcommand{\ra}{\rangle}
\newcommand{\rmv}[1]{}
\newcommand{\hs}{\hskip10pt}
\newcommand{\LG}{VN(G)}
\newcommand{\LO}{L^1(G)}
\newcommand{\LOQ}{L^1(\mathbb{G})}
\newcommand{\LOQH}{L^1(\widehat{\mathbb{G}})}
\newcommand{\LTQ}{L^2(\mathbb{G})}
\newcommand{\LTQH}{L^2(\widehat{\mathbb{G}})}
\newcommand{\LI}{L^{\infty}(G)}
\newcommand{\LIQ}{L^{\infty}(\mathbb{G})}
\newcommand{\LIQH}{L^{\infty}(\widehat{\mathbb{G}})}
\newcommand{\LIQHP}{L^{\infty}(\widehat{\mathbb{G}}')}
\newcommand{\BLT}{\mc{B}(L^2(G))}
\newcommand{\BLTQ}{\mc{B}(L^2(\mathbb{G}))}
\newcommand{\TC}{\mc{T}(L^2(G))}
\newcommand{\TCQ}{\mc{T}(L^2(\mathbb{G}))}
\newcommand{\CBTCrr}{\mc{CB}_{\mc{T}_{\rhd}}(\mc{B}(L^2(\mathbb{G})))}
\newcommand{\CBTCrhr}{\mc{CB}_{\mc{T}_{\widehat{\rhd}}}(\mc{B}(L^2(\mathbb{G})))}
\newcommand{\CBTClhr}{_{\mc{T}_{\widehat{\rhd}}}\mc{CB}(\mc{B}(L^2(\mathbb{G})))}
\newcommand{\CBTClhl}{_{\mc{T}_{\widehat{\lhd}}}\mc{CB}(\mc{B}(L^2(\mathbb{G})))}
\newcommand{\CBTCrhl}{\mc{CB}_{\mc{T}_{\widehat{\lhd}}}(\mc{B}(L^2(\mathbb{G})))}
\newcommand{\CBLTQ}{\mc{CB}_{L^{\infty}(\widehat{\mathbb{G}})}^{L^{\infty}(\mathbb{G})}(\mc{B}(L_2(\mathbb{G})))}
\newcommand{\Mcb}{M_{cb}A(G)}
\newcommand{\McbQl}{M_{cb}^l(L^1(\mathbb{G}))}
\newcommand{\McbQr}{M_{cb}^r(L^1(\mathbb{G}))}
\newcommand{\QcbQr}{Q_{cb}^r(L^1(\mathbb{G}))}
\newcommand{\vphi}{\varphi}
\newcommand{\lm}{\lambda}
\newcommand{\Gam}{\Gamma}
\newcommand{\om}{\omega}
\newcommand{\Om}{\Omega}
\newcommand{\ten}{\otimes}
\newcommand{\oten}{\overline{\otimes}}
\newcommand{\hten}{\widehat{\otimes}}
\newcommand{\id}{\textnormal{id}}
\newcommand{\h}[1]{\widehat{#1}}
\newcommand{\wh}[1]{\widehat{#1}}
\newcommand{\Irr}{\mathrm{Irr}(\mathbb{G})}
\providecommand{\norm}[1]{\lVert#1\rVert}
\newcommand{\G}{\mathbb{G}}
\newcommand{\C}{\mathbb{C}}
\newcommand{\N}{\mathbb{N}}
\newcommand{\R}{\mathbb{R}}
\author{Jason Crann}
\address{School of Mathematics and Statistics, Carleton University, Ottawa, ON, Canada K1S 5B6}
\address{Department of Pure Mathematics, University of Waterloo, Waterloo, ON, Canada N2L 3G1}
\address{Institute for Quantum Computing, University of Waterloo, Waterloo, ON, Canada N2L 3G1}
\address{Department of Mathematics and Statistics, University of Guelph, Guelph, ON, Canada N1G 2W1}
\email{jcrann@uwaterloo.ca}
\thanks{This work was completed as part of the author's doctoral thesis, and was supported by an NSERC Canada Graduate Scholarship and a FCRF Joint PhD Scholarship. The author would like to thank Matthias Neufang and Zhong-Jin Ruan for helpful discussions.}
\keywords{Locally compact quantum groups; Amenability; Injective Modules.}
\subjclass[2010]{Primary 22D35; Secondary 46M10, 46L89.}
\begin{document}

\title[Amenability and covariant injectivity of LCQG II]
 {Amenability and covariant injectivity of locally compact quantum groups II}

\begin{abstract}
Building on our previous work, we study the non-relative homology of quantum group convolution algebras. Our main result establishes the equivalence of amenability of a locally compact quantum group $\G$ and 1-injectivity of $\LIQH$ as an operator $\LOQH$-module. In particular, a locally compact group $G$ is amenable if and only if its group von Neumann algebra $\LG$ is 1-injective as an operator module over the Fourier algebra $A(G)$. As an application, we provide a decomposability result for completely bounded $\LOQH$-module maps on $\LIQH$, and give a simplified proof that amenable discrete quantum groups have co-amenable compact duals which avoids the use of modular theory and the Powers--St\o rmer inequality, suggesting that our homological techniques may yield a new approach to the open problem of duality between amenability and co-amenability.
\end{abstract}
\maketitle

\section{Introduction}
\label{intro}

The connection between amenability of a locally compact group $G$ and injectivity of its group von Neumann algebra $\LG$ has been a topic of interest in abstract harmonic analysis for decades. Amenability of $G$ entails injectivity of $\LG$ \cite{Gui}, however, the converse is not true, e.g., if $G=SL(n,\R)$ for $n\geq2$; indeed, a result of Connes' \cite[Corollary 7]{Connes}, attributed to Dixmier, states that $\LG$ is injective for any separable connected locally compact group.

In \cite{CN}, we clarified this connection by exploiting the $\TC$-module structure of $\BLT$, showing the equivalence of amenability of a locally compact group $G$ and covariant injectivity of $\LG$, meaning the existence of a conditional expectation $E:\BLT\rightarrow\LG$ commuting with the canonical $\TC$-action \cite[Theorem 4.2]{CN}. We also established a corresponding result at the level of locally compact quantum groups $\G$ and studied the relationship between amenability of $\G$ and relative 1-injectivity of its various operator modules over $\TCQ$.

In this paper we build on results from \cite{CN}, focusing on the non-relative homology of operator modules over $\LOQ$ and $\TCQ$. Our main result states that a locally compact quantum group $\G$ is amenable if and only if $\LIQH$ is 1-injective as an operator $\LOQH$-module. This new homological manifestation of quantum group duality shows that in order to recover properties of $\G$, one should not only consider the von Neumann algebraic structure of $\LIQH$, but rather its \textit{operator module} structure. As an application, we provide a decomposability result in the spirit of Haagerup \cite{Ha2} for completely bounded $\LOQH$-module maps on $\LIQH$. Specifically, if $\G$ is amenable then
$$\mc{CB}_{\LOQH}(\LIQH)=\textnormal{span} \ \mc{CP}_{\LOQH}(\LIQH).$$
Moreover, the proof of the above equality leads to a characterization of the predual $\QcbQr$ of the completely bounded (right) multipliers $\McbQr$ for an arbitrary locally compact quantum group $\G$ as
$$\QcbQr\cong C_0(\G)\hten_{\LOQ}\LOQ,$$
which is new even in the group setting.

Arguably the biggest open problem in abstract harmonic analysis on locally compact quantum groups is the duality between amenability and co-amenability. In the group setting, this is Leptin's theorem \cite{Lep}, which states that a locally compact group $G$ is amenable if and only if its Fourier algebra $A(G)$ has a bounded approximate identity. In the quantum group setting, many partial results have been obtained over the years. Ruan showed that a compact Kac algebra $\G$ is co-amenable if and only if its discrete dual $\widehat{\G}$ is amenable \cite[Theorem 4.5]{Ru4}. This equivalence was later generalized by Tomatsu (and, independently by Blanchard and Vaes) to arbitrary compact quantum groups \cite[Theorem 3.8]{Tom}. Tomatsu's argument relies on the specific modular theory of discrete quantum groups in order to apply the Powers--St\o rmer inequality in a crucial step. As another application of our main result, we give a considerably simplified proof of Tomatsu's theorem which avoids the use of modular theory and the Powers--St\o rmer inequality, suggesting that our homological techniques may provide a new approach to the general duality problem of amenability and co-amenability.

For regular quantum groups $\G$, we obtain a version of our main result at the predual level, showing the equivalence of discreteness of $\G$ and 1-projectivity of $\LOQ$ as an operator module over itself.

The paper is structured as follows. We begin in section 2 with some preliminaries on the homology of operator modules, and include some new results on the relationship between relative and non-relative homology. Section 3 is devoted to a brief overview of the relevant machinery from locally compact quantum groups, their associated operator modules, and completely bounded multipliers. Section 4 outlines the operator module structure of $\BLTQ$ over $\TCQ$ and contains new results which are used in the proof of the main theorem. Section 5 contains the main result of the paper along with its aforementioned applications.

\section{Preliminaries}

Let $\mc{A}$ be a complete contractive Banach algebra. We say that an operator space $X$ is a right \e{operator $\mc{A}$-module} if it is a right Banach $\mc{A}$-module such that the module map $m_X:X\hten\mc{A}\rightarrow X$ is completely contractive, where $\hten$ denotes the operator space projective tensor product. We say that $X$ is \e{faithful} if for every non-zero $x\in X$, there is $a\in\mc{A}$ such that $x\cdot a\neq 0$, and we say that $X$ is \e{essential} if $\la X\cdot\mc{A}\ra=X$, where $\la\cdot\ra$ denotes the closed linear span. We denote by $\mathbf{mod}-\mc{A}$ the category of right operator $\mc{A}$-modules with morphisms given by completely bounded module homomorphisms. Left operator $\mc{A}$-modules and operator $\mc{A}$-bimodules are defined similarly, and we denote the respective categories by $\mc{A}-\mathbf{mod}$ and $\mc{A}-\mathbf{mod}-\mc{A}$. If $X\in\mathbf{mod}-\mc{A}$ is a dual operator space such that the action of $a$ is weak* continuous for all $a\in\mc{A}$, then we say that $X$ is a \e{dual right operator $\mc{A}$-module}. We let $\mathbf{nmod}-\mc{A}$ denote the category of dual right operator $\mc{A}$-modules with morphisms given by weak*-weak* continuous completely bounded module homomorphisms, and similarly for dual left operator $\mc{A}$-modules.

\begin{remark} Regarding terminology, in what follows we will often omit the term ``operator'' when discussing homological properties of operator modules as we will be working exclusively in the operator space category.
\end{remark}

Let $\mc{A}$ be a completely contractive Banach algebra, $X\in\mathbf{mod}-\mc{A}$ and $Y\in\mc{A}-\mathbf{mod}$. The \e{$\mc{A}$-module tensor product} of $X$ and $Y$ is the quotient space $X\hten_{\mc{A}}Y:=X\hten Y/N$, where
$$N=\la x\cdot a\ten y-x\ten a\cdot y\mid x\in X, \ y\in Y, \ a\in\mc{A}\ra,$$
and, again, $\la\cdot\ra$ denotes the closed linear span. It follows that
$$\mc{CB}_{\mc{A}}(X,Y^*)\cong N^{\perp}\cong(X\hten_{\mc{A}} Y)^*,$$
where $\mc{CB}_{\mc{A}}(X,Y^*)$ denotes the space of completely bounded right $\mc{A}$-module maps $\Phi:X\rightarrow Y^*$.
If $Y=\mc{A}$, then clearly $N\subseteq\mathrm{Ker}(m_X)$ where $m_X:X\hten\mc{A}\rightarrow X$ is the module map. If the induced mapping $\widetilde{m}_X:X\hten_{\mc{A}}\mc{A}\rightarrow X$ is a completely isometric isomorphism we say that $X$ is an \e{induced $\mc{A}$-module}. A similar definition applies for left modules. In particular, we say that $\mc{A}$ is \e{self-induced} if $\widetilde{m}_\mc{A}:\mc{A}\hten_{\mc{A}}\mc{A}\cong\mc{A}$ completely isometrically.

Let $\mc{A}$ be a completely contractive Banach algebra and $X\in\mathbf{mod}-\mc{A}$. The identification $\mc{A}^+=\mc{A}\oplus_1\C$ turns the unitization of $\mc{A}$ into a unital completely contractive Banach algebra, and it follows that $X$ becomes a right operator $\mc{A}^+$-module via the extended action
\begin{equation*}x\cdot(a+\lm e)=x\cdot a+\lm x, \ \ \ a\in\mc{A}^+, \ \lm\in\C, \ x\in X.\end{equation*}
Let $C\geq1$. We say that $X$ is \e{relatively $C$-projective} if there exists a morphism $\Phi^+:X\rightarrow X\hten\mc{A}^+$ satisfying $\norm{\Phi^+}_{cb}\leq C$ which is a right inverse to the extended module map $m_X^+:X\hten\mc{A}^+\rightarrow X$. When $X$ is essential, this is equivalent to the existence of a morphism $\Phi:X\rightarrow X\hten\mc{A}$ satisfying $\norm{\Phi}_{cb}\leq C$ and $m_X\circ\Phi=\id_{X}$ by the operator analogue of \cite[Proposition 1.2]{DP}. We say that $X$ is \e{$C$-projective} if for every $Y,Z\in\mathbf{mod}-\mc{A}$, every complete quotient morphism $\Psi:Y\twoheadrightarrow Z$, every morphism $\Phi:X\rightarrow Z$, and every $\varepsilon>0$, there exists a morphism $\widetilde{\Phi}_\varepsilon:X\rightarrow Y$ such that $\norm{\widetilde{\Phi}_\varepsilon}_{cb}< C\norm{\Phi}_{cb}+\varepsilon$  and $\Psi\circ\widetilde{\Phi}_\varepsilon=\Phi$.


%

When $\mc{A}=\C$, the definition of $C$-projectivity coincides with that of a $C$-projective operator space \cite[Definition 3.3]{Blech}. The following relationship between relative projectivity and projectivity appears to be new, and will be used to characterize the 1-projectivity of quantum group convolution algebras.

\begin{prop}\label{p:rel+proj} Let $\mc{A}$ be a completely contractive Banach algebra and let $X\in\mathbf{mod}-\mc{A}$. If $X$ is $C_1$-projective in $\mathbf{mod}-\C$ and is relatively $C_2$-projective in $\mathbf{mod}-\mc{A}$, then $X$ is $C_1C_2$-projective in $\mathbf{mod}-\mc{A}$.\end{prop}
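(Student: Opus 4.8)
The plan is to prove the statement by composing, in a controlled way, the factorization property witnessing relative $C_2$-projectivity over $\mc{A}$ with the lifting property witnessing $C_1$-projectivity over $\C$. So suppose we are given a complete quotient morphism $\Psi:Y\twoheadrightarrow Z$ in $\mathbf{mod}-\mc{A}$, a morphism $\Phi:X\rightarrow Z$, and $\varepsilon>0$; we must produce $\widetilde{\Phi}:X\rightarrow Y$ in $\mathbf{mod}-\mc{A}$ with $\norm{\widetilde{\Phi}}_{cb}<C_1C_2\norm{\Phi}_{cb}+\varepsilon$ and $\Psi\circ\widetilde{\Phi}=\Phi$. We may assume $X$ is essential (if not, one first passes to $\langle X\cdot\mc{A}\rangle$, or argues via the unitization; I expect the essential case to be the substantive one and the general case to follow formally from the $\mc{A}^+$-description of relative projectivity). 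By the essential case of relative $C_2$-projectivity, there is a morphism $\rho=\Phi^{\mc{A}}:X\rightarrow X\hten\mc{A}$ with $\norm{\rho}_{cb}\leq C_2$ and $m_X\circ\rho=\id_X$.

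Next I would use $C_1$-projectivity of $X$ \emph{as an operator space}, i.e. in $\mathbf{mod}-\C$. The key move is to apply the $\C$-lifting not to the original diagram but to the diagram obtained by tensoring with $\mc{A}$ on the right. Concretely, $\Psi\hten\id_{\mc{A}}:Y\hten\mc{A}\twoheadrightarrow Z\hten\mc{A}$ is again a complete quotient map (the operator space projective tensor product is projective, i.e. preserves complete quotient maps), and it is a morphism in $\mathbf{mod}-\mc{A}$ for the outer action on the $\mc{A}$-leg. The composite $(\Phi\hten\id_{\mc{A}})\circ\rho:X\rightarrow Z\hten\mc{A}$ has $cb$-norm at most $C_2\norm{\Phi}_{cb}$. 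Applying $C_1$-projectivity of $X$ in $\mathbf{mod}-\C$ to lift this map through the complete quotient map $\Psi\hten\id_{\mc{A}}$, with error tolerance $\varepsilon$, yields a completely bounded (but a priori only $\C$-linear) map $\sigma:X\rightarrow Y\hten\mc{A}$ with $\norm{\sigma}_{cb}<C_1(C_2\norm{\Phi}_{cb})+\varepsilon = C_1C_2\norm{\Phi}_{cb}+\varepsilon$ and $(\Psi\hten\id_{\mc{A}})\circ\sigma=(\Phi\hten\id_{\mc{A}})\circ\rho$. Finally set $\widetilde{\Phi}:=m_Y\circ\sigma:X\rightarrow Y$, where $m_Y:Y\hten\mc{A}\rightarrow Y$ is the (completely contractive) module map. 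Then $\norm{\widetilde{\Phi}}_{cb}\leq\norm{\sigma}_{cb}<C_1C_2\norm{\Phi}_{cb}+\varepsilon$, and $\Psi\circ\widetilde{\Phi}=\Psi\circ m_Y\circ\sigma=m_Z\circ(\Psi\hten\id_{\mc{A}})\circ\sigma=m_Z\circ(\Phi\hten\id_{\mc{A}})\circ\rho=\Phi\circ m_X\circ\rho=\Phi$, using naturality of the module maps with respect to the morphism $\Psi$ (resp. $\Phi$) and $m_X\circ\rho=\id_X$.

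The one genuine point to check — and the step I expect to be the main obstacle — is that $\widetilde{\Phi}=m_Y\circ\sigma$ is actually an $\mc{A}$-module map, since $\sigma$ is only $\C$-linear. This is exactly where relative projectivity ``corrects'' the lift: for $x\in X$ and $a\in\mc{A}$ one computes $\widetilde{\Phi}(x\cdot a)=m_Y(\sigma(x\cdot a))$ and wants this to equal $m_Y(\sigma(x))\cdot a=m_Y(\sigma(x)\cdot a)$, where the $\mc{A}$-action on $Y\hten\mc{A}$ is the outer one. This does \emph{not} hold for an arbitrary lift $\sigma$; the standard fix is to replace $\sigma$ by $\sigma':=m_{Y\hten\mc{A}}\circ(\sigma\hten\id_{\mc{A}})\circ\rho$, i.e. to symmetrize using $\rho$ once more, or equivalently to work throughout with the canonical morphism $X\xrightarrow{\rho} X\hten\mc{A}\xrightarrow{\Phi_{\text{lift}}\hten\id}Y\hten\mc{A}$ and observe that post-composing with $m_Y$ lands one in the module category precisely because $\rho$ splits $m_X$ as a module map and $m_Y$ intertwines the two $\mc{A}$-actions. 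I would carry out this verification carefully via the defining relation $N\subseteq\mathrm{Ker}(m_X)$ for the module tensor product, exactly as in the operator analogue of \cite[Proposition 1.2]{DP}; modulo this bookkeeping, the norm estimate $C_1C_2$ is immediate from multiplicativity of $cb$-norms under composition and the projectivity of $\hten$.
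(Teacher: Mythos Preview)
Your detour through a lift of $(\Phi\hten\id_{\mc{A}})\circ\rho$ is where the argument loses traction. As you correctly observe, the resulting $\sigma$ is only $\C$-linear, so $m_Y\circ\sigma$ need not be an $\mc{A}$-module map. Your first proposed repair, $\sigma':=m_{Y\hten\mc{A}}\circ(\sigma\hten\id_{\mc{A}})\circ\rho$, does yield a module map and the lifting diagram still commutes, but it invokes $\rho$ a second time and therefore picks up an extra factor of $C_2$: one obtains only
$$\norm{m_Y\circ\sigma'}_{cb}\leq\norm{\sigma}_{cb}\,\norm{\rho}_{cb}<\bigl(C_1C_2\norm{\Phi}_{cb}+\varepsilon\bigr)C_2=C_1C_2^2\norm{\Phi}_{cb}+C_2\varepsilon,$$
which establishes $C_1C_2^2$-projectivity rather than $C_1C_2$-projectivity. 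So your two ``equivalent'' fixes are not in fact equivalent.

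Your second alternative is the correct one, and it is exactly the paper's argument: lift $\Phi$ itself (not the tensored map) through $\Psi$ to a $\C$-linear $\Phi_\varepsilon:X\to Y$ with $\norm{\Phi_\varepsilon}_{cb}<C_1\norm{\Phi}_{cb}+\varepsilon/C_2$, and then set $\widetilde{\Phi}_\varepsilon:=m_Y^+\circ(\Phi_\varepsilon\ten\id)\circ\alpha^+$, where $\alpha^+:X\to X\hten\mc{A}^+$ is the splitting witnessing relative $C_2$-projectivity. The key point you were circling is then automatic: $\Phi_\varepsilon\ten\id$ is already a right $\mc{A}$-module map for the outer action, since the $\C$-linear lift sits in the first tensor leg while $\mc{A}$ acts on the second. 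No second averaging is needed, and the bound $\norm{\widetilde{\Phi}_\varepsilon}_{cb}<C_1C_2\norm{\Phi}_{cb}+\varepsilon$ follows directly. Working with $\mc{A}^+$ throughout, as the paper does, also dispenses with your essentiality reduction.
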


\begin{proof} Let $Y,Z\in\mathbf{mod}-\mc{A}$, let $\Psi:Y\twoheadrightarrow Z$ be a complete quotient morphism and $\Phi:X\rightarrow Z$ be a morphism. By relative $C_2$-projectivity, there exists a morphism $\alpha^+:X\rightarrow X\hten\mc{A}^+$ satisfying $m_X^+\circ\alpha^+=\id_X$ and $\norm{\alpha^+}_{cb}\leq C_2$. Since $X$ is a $C_1$-projective operator space, for every $\varepsilon>0$, there exists a lifting $\Phi_{\varepsilon}:X\rightarrow Y$ satisfying $\Psi\circ\Phi_{\varepsilon}=\Phi$ and $\norm{\Phi_{\varepsilon}}_{cb}<C_1\norm{\Phi}_{cb}+\varepsilon/C_2$. The morphism $(\Phi_{\varepsilon}\ten\id):X\hten\mc{A}^+\rightarrow Y\hten\mc{A}^+$ then satisfies $\norm{\Phi_{\varepsilon}\ten\id}_{cb}< C_1\norm{\Phi}_{cb}+\varepsilon/C_2$, and composing with $\alpha^+$ together with the multiplication $m_Y^+:Y\hten\mc{A}^+\rightarrow Y$, we obtain a morphism $\widetilde{\Phi}_{\varepsilon}:=m_Y^+\circ(\Phi_{\varepsilon}\ten\id)\circ\alpha^+:X\rightarrow Y$ satisfying $\norm{\widetilde{\Phi}_{\varepsilon}}_{cb}< C_1C_2\norm{\Phi}_{cb}+\varepsilon$. Moreover, using the module properties of the relevant morphisms we have
\begin{align*}\Psi\circ\widetilde{\Phi}_{\varepsilon}&=\Psi\circ m_Y^+\circ(\Phi_{\varepsilon}\ten\id)\circ\alpha^+\\
&=m_Z^+\circ(\Psi\ten\id)\circ(\Phi_{\varepsilon}\ten\id)\circ\alpha^+\\
&=m_Z^+\circ(\Phi\ten\id)\circ\alpha^+\\
&=\Phi\circ m_X^+\circ\alpha^+\\
&=\Phi.
\end{align*}
Hence, $X$ is $C_1C_2$-projective.
\end{proof}

Note that the converse of Proposition \ref{p:rel+proj} (when $C_1=C_2=1$) is not true in general as $\mc{A}$ is both $1$-projective and relatively $1$-projective in $\mathbf{mod}-\mc{A}$ for any unital $C^*$-algebra. However, the only $C^*$-algebra which is a 1-projective operator space is $\C$ by \cite[Theorem 3.4]{Blech}.

Given a completely contractive Banach algebra $\mc{A}$ and $X\in\mathbf{mod}-\mc{A}$, there is a canonical completely contractive morphism $\Delta^+:X\rightarrow\mc{CB}(\mc{A}^+,X)$ given by
\begin{equation*}\Delta^+(x)(a)=x\cdot a, \ \ \ x\in X, \ a\in\mc{A}^+,\end{equation*}
where the right $\mc{A}$-module structure on $\mc{CB}(\mc{A}^+,X)$ is defined by
\begin{equation*}(\Psi\cdot a)(b)=\Psi(ab), \ \ \ a\in\mc{A}, \ \Psi\in\mc{CB}(\mc{A}^+,X), \ b\in\mc{A}^+.\end{equation*}
An analogous construction exists for objects in $\mc{A}-\mathbf{mod}$. Let $C\geq 1$. We say that $X$ is \e{relatively $C$-injective} if there exists a morphism $\Phi^+:\mc{CB}(\mc{A}^+,X)\rightarrow X$ such that $\Phi^+\circ\Delta^+=\id_{X}$ and $\norm{\Phi^+}_{cb}\leq C$. When $X$ is faithful, this is equivalent to the existence a morphism $\Phi:\mc{CB}(\mc{A},X)\rightarrow X$ such that $\Phi\circ\Delta=\id_{X}$ and $\norm{\Phi}_{cb}\leq C$ by the operator analogue of \cite[Proposition 1.7]{DP}, where $\Delta(x)(a):=\Delta^+(x)(a)$ for $x\in X$ and $a\in\mc{A}$. We say that $X$ is \e{$C$-injective} if for every $Y,Z\in\mathbf{mod}-\mc{A}$, every completely isometric morphism $\Psi:Y\hookrightarrow Z$, and every morphism $\Phi:Y\rightarrow X$, there exists a morphism $\widetilde{\Phi}:Z\rightarrow X$ such that $\norm{\widetilde{\Phi}}_{cb}\leq C\norm{\Phi}_{cb}$ and $\widetilde{\Phi}\circ\Psi=\Phi$.


Clearly, when $\mc{A}=\C$, the definition of $C$-injectivity coincides with that of a $C$-injective operator space \cite[\S24]{Pi}. For general $\mc{A}$,
the dual $X^*$ of any $X\in\mathbf{mod}-\mc{A}$ has a canonical left $\mc{A}$-module structure, and it follows that $X^*$ is $C$-injective in $\mc{A}-\mathbf{mod}$ whenever $X$ is $C$-projective in $\mathbf{mod}-\mc{A}$ by an operator module version of \cite[Theorem 3.5]{Blech}. Moreover, $X^*$ is relatively $C$-injective in $\mc{A}-\mathbf{nmod}$ if and only if $X$ is relatively $C$-projective in $\mathbf{mod}-\mc{A}$.
The next proposition also appears to be new and will be used in the proof of our main result.



\begin{prop}\label{p:rel+inj} Let $\mc{A}$ be a completely contractive Banach algebra and let $X\in\mathbf{mod}-\mc{A}$. If $X$ is $C_1$-injective in $\mathbf{mod}-\C$ and is relatively $C_2$-injective in $\mathbf{mod}-\mc{A}$, then $X$ is $C_1C_2$-injective in $\mathbf{mod}-\mc{A}$.\end{prop}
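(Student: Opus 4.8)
The plan is to mirror the structure of the proof of Proposition \ref{p:rel+proj}, but dualizing every step: where projectivity uses a lifting through a complete quotient map and then \emph{pushes forward} along the module multiplication, injectivity uses an extension along a completely isometric inclusion and then \emph{pulls back} along the canonical map $\Delta^+$. So let $Y,Z\in\mathbf{mod}-\mc{A}$, let $\Psi:Y\hookrightarrow Z$ be a completely isometric morphism, and let $\Phi:Y\rightarrow X$ be a morphism; I must produce $\widetilde{\Phi}:Z\rightarrow X$ with $\widetilde{\Phi}\circ\Psi=\Phi$ and $\norm{\widetilde{\Phi}}_{cb}\leq C_1C_2\norm{\Phi}_{cb}$.

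First I would use relative $C_2$-injectivity to get a morphism $\beta^+:\mc{CB}(\mc{A}^+,X)\rightarrow X$ with $\beta^+\circ\Delta^+=\id_X$ and $\norm{\beta^+}_{cb}\leq C_2$. The idea is to replace the target $X$ by the ``bigger'' module $\mc{CB}(\mc{A}^+,X)$, which is injective as an operator space because $X$ is (mapping spaces into $C_1$-injective operator spaces are $C_1$-injective, using that $\mc{CB}(\mc{A}^+,X)=\mc{CB}(\mc{A}^+)\,\check\otimes\,X$-type reasoning, or more simply that $\mc{CB}(V,X)$ is a $C_1$-injective operator space for any operator space $V$ whenever $X$ is). Concretely, consider the composite $\Delta^+\circ\Phi:Y\rightarrow\mc{CB}(\mc{A}^+,X)$, which is a morphism of $\mc{A}$-modules of cb-norm $\leq\norm{\Phi}_{cb}$ (since $\Delta^+$ is a completely contractive morphism). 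Viewing $\mc{CB}(\mc{A}^+,X)$ merely as an operator space, its $C_1$-injectivity yields an extension $\gamma:Z\rightarrow\mc{CB}(\mc{A}^+,X)$ along $\Psi$ with $\gamma\circ\Psi=\Delta^+\circ\Phi$ and $\norm{\gamma}_{cb}\leq C_1\norm{\Phi}_{cb}$ — but $\gamma$ need not be a module map.

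The key trick, exactly as in the projective case, is to \emph{average $\gamma$ over the module action} to repair the module property. Define $\widetilde{\Phi}:Z\rightarrow X$ by $\widetilde{\Phi}=\beta^+\circ\widehat{\gamma}$, where $\widehat{\gamma}:Z\rightarrow\mc{CB}(\mc{A}^+,X)$ is the ``module-corrected'' version given by $\widehat{\gamma}(z)(a)=\gamma(z\cdot a)(e)$ for $z\in Z$, $a\in\mc{A}^+$ (here $e$ is the unit of $\mc{A}^+$), or equivalently one checks directly that $z\mapsto(a\mapsto\gamma(z\cdot a)(e))$ is a morphism into $\mc{CB}(\mc{A}^+,X)$ with cb-norm $\leq\norm{\gamma}_{cb}\leq C_1\norm{\Phi}_{cb}$. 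Then $\norm{\widetilde{\Phi}}_{cb}\leq\norm{\beta^+}_{cb}\norm{\widehat\gamma}_{cb}\leq C_1C_2\norm{\Phi}_{cb}$. To verify $\widetilde{\Phi}\circ\Psi=\Phi$: for $y\in Y$ and $a\in\mc{A}^+$ we have $\widehat{\gamma}(\Psi(y))(a)=\gamma(\Psi(y)\cdot a)(e)=\gamma(\Psi(y\cdot a))(e)=(\Delta^+\circ\Phi)(y\cdot a)(e)=\Delta^+(\Phi(y\cdot a))(e)=\Phi(y\cdot a)=\Phi(y)\cdot a=\Delta^+(\Phi(y))(a)$, using that $\Psi$ and $\Phi$ are module maps and $\Delta^+(x)(e)=x$; hence $\widehat{\gamma}\circ\Psi=\Delta^+\circ\Phi$, and applying $\beta^+$ gives $\widetilde{\Phi}\circ\Psi=\beta^+\circ\Delta^+\circ\Phi=\Phi$.

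The main obstacle — the one point that needs genuine care rather than bookkeeping — is verifying that $\widehat{\gamma}$ (the averaged map) is honestly a right $\mc{A}$-module morphism into $\mc{CB}(\mc{A}^+,X)$ with the correct module structure $(\Psi\cdot a)(b)=\Psi(ab)$, and that its cb-norm is controlled. The module-map identity $\widehat\gamma(z\cdot a)=\widehat\gamma(z)\cdot a$ reduces, via the definition of the action on $\mc{CB}(\mc{A}^+,X)$, to the associativity identity $(z\cdot a)\cdot b=z\cdot(ab)$ in $Z$, so it goes through cleanly; the cb-norm bound follows because $\widehat\gamma$ is a composition of $\gamma$ with the completely contractive ``evaluate-at-$e$ after acting'' map $z\mapsto(a\mapsto z\cdot a)$ from $Z$ to $\mc{CB}(\mc{A}^+,Z)$ — one must just confirm this auxiliary map is indeed completely contractive, which is immediate from $\norm{m_Z^+}_{cb}\leq1$. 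I would also need the fact (used implicitly above) that $\mc{CB}(V,X)$ is $C_1$-injective as an operator space whenever $X$ is, for an arbitrary operator space $V$; this is standard and follows from $\mc{CB}(V,X)\cong(V\hten X_*)^*$-type duality together with injectivity passing to such mapping spaces, but it should be cited or remarked explicitly.
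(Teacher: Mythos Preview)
Your overall strategy matches the paper's, and the module-correction step $\widehat\gamma(z)(a)=\gamma(z\cdot a)(e)$ together with the final composition with $\beta^+$ is exactly the mechanism the paper uses. However, there is one genuine error: the assertion that $\mc{CB}(V,X)$ is $C_1$-injective as an operator space whenever $X$ is, is \emph{false}. Taking $X=\C$ gives $\mc{CB}(V,\C)=V^*$ with its standard dual operator space structure, and dual operator spaces need not be $1$-injective; for instance $OH_2^*\cong OH_2$ is not $1$-injective (the finite-dimensional $1$-injective operator spaces are $\ell^\infty$-sums of rectangular matrix blocks, and $OH_2$ is none of these). Equipping $\mc{A}=OH_2$ with the zero product makes it a completely contractive Banach algebra, and then $\mc{CB}(\mc{A}^+,\C)=(\mc{A}^+)^*\cong OH_2\oplus_\infty\C$ is not $1$-injective either. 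So you cannot produce the operator-space extension $\gamma:Z\rightarrow\mc{CB}(\mc{A}^+,X)$ of $\Delta^+\circ\Phi$ in the way you propose.

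The repair is immediate and is already latent in your own computation: you only ever use $\mathrm{ev}_e\circ\gamma:Z\rightarrow X$, which would be an operator-space extension of $\mathrm{ev}_e\circ\Delta^+\circ\Phi=\Phi$ itself. So skip the detour through $\mc{CB}(\mc{A}^+,X)$: use $C_1$-injectivity of $X$ directly to extend $\Phi:Y\rightarrow X$ to some $\widetilde\beta:Z\rightarrow X$ with $\norm{\widetilde\beta}_{cb}\leq C_1\norm{\Phi}_{cb}$, and then set $\widehat\gamma(z)(a)=\widetilde\beta(z\cdot a)$. Your verifications that $\widehat\gamma$ is a right $\mc{A}$-module map with $\norm{\widehat\gamma}_{cb}\leq\norm{\widetilde\beta}_{cb}$ and that $\widehat\gamma\circ\Psi=\Delta^+\circ\Phi$ go through verbatim. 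This is precisely the paper's argument, which packages the same construction as the statement that $\mc{CB}(\mc{A}^+,X)$ is $C_1$-injective \emph{in $\mathbf{mod}-\mc{A}$} (not merely as an operator space): given a module map $\alpha:Y\rightarrow\mc{CB}(\mc{A}^+,X)$, evaluate at $e$ to land in $X$, extend there, and re-induce a module map via $z\mapsto(a\mapsto\widetilde\beta(z\cdot a))$.
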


\begin{proof} We first show that $\mc{CB}(\mc{A}^+,X)$ is $C_1$-injective in $\mathbf{mod}-\mc{A}$ using the standard argument. To this end, let $Y,Z\in\mathbf{mod}-\mc{A}$, let $\kappa:Y\hookrightarrow Z$ be a completely isometric morphism, and let $\alpha:Y\rightarrow \mc{CB}(\mc{A}^+,X)$ be a morphism.
Define $\beta:Y\rightarrow X$ by $\beta(y)=\alpha(y)(e)$, $y\in Y$, where $e$ is the identity in $\mc{A}^+$. Then $\norm{\beta}_{cb}\leq\norm{\alpha}_{cb}$, and by $C_1$-injectivity of $X$ in $\mathbf{mod}-\C$, there exists an extension $\widetilde{\beta}:Z\rightarrow X$ satisfying $\beta=\widetilde{\beta}\circ\kappa$ and $\norm{\widetilde{\beta}}_{cb}\leq C_1\norm{\beta}_{cb}\leq C_1\norm{\alpha}_{cb}$.
Define $\widetilde{\alpha}:Z\rightarrow\mc{CB}(\mc{A}^+,X)$ by $\widetilde{\alpha}(z)(a)=\widetilde{\beta}(z\cdot a)$, for $z\in Z$, $a\in\mc{A}^+$. Then
$$(\widetilde{\alpha}(z)\cdot a)(b)=\widetilde{\alpha}(z)(ab)=\widetilde{\beta}(z\cdot ab)=\widetilde{\alpha}(z\cdot a)(b)$$
for all $z\in Z$ and $a,b\in\mc{A}$. Thus, $\widetilde{\alpha}$ is a module map extending $\alpha$ such that $\norm{\widetilde{\alpha}}_{cb} \leq \norm{\widetilde{\beta}}_{cb}\leq C_1\norm{\alpha}_{cb}$.

Now, by relative $C_2$-injectivity of $X$ in $\mathbf{mod}-\mc{A}$ there exists a morphism $\Phi^+:\mc{CB}(\mc{A}^+,X)\rightarrow X$ satisfying $\Phi^+\circ\Delta^+=\id_X$ and $\norm{\Phi^+}_{cb}\leq C_2$. Thus, if $Y,Z\in\mathbf{mod}-\mc{A}$ with $\kappa:Y\hookrightarrow Z$ a completely isometric morphism, and $\alpha:Y\rightarrow X$ is a morphism, then we may extend the morphism $\Delta^+\circ\alpha:Y\rightarrow\mc{CB}(\mc{A}^+,X)$ to a morphism $\widetilde{\alpha}:Z\rightarrow\mc{CB}(\mc{A}^+,X)$ with $\norm{\widetilde{\alpha}}_{cb}\leq C_1\norm{\Delta^+\circ\alpha}_{cb}\leq C_1\norm{\alpha}_{cb}$. The morphism $\Phi^+\circ\widetilde{\alpha}:Z\rightarrow X$ satisfies $\Phi^+\circ\widetilde{\alpha}\circ\kappa=\alpha$ and $\norm{\Phi\circ\widetilde{\alpha}}_{cb}\leq C_1C_2\norm{\alpha}_{cb}$, and is therefore the desired extension.
\end{proof}

The converse of Proposition \ref{p:rel+inj} is not true in general (when $C_1=C_2=1$). Indeed, for any unital completely contractive Banach algebra $\mc{A}$ and any $1$-injective operator space $X$, it follows from the proof of Proposition \ref{p:rel+inj} that $\mc{CB}(\mc{A},X)$ is $1$-injective in $\mathbf{mod}-\mc{A}$. This clearly implies relative $1$-injectivity in $\mathbf{mod}-\mc{A}$. However, consider $\mc{A}=B(G)$ and $X=\C$, where $B(G)$ is the Fourier-Stieltjes algebra of a non-amenable discrete group $G$. Since $B(G)$ is the operator dual of the full group $C^*$-algebra $C^*(G)$, we have $\mc{CB}(\mc{A},X)=B(G)^*=C^*(G)^{**}$. If this were a $1$-injective operator space, the group $C^*$-algebra $C^*(G)$ would be nuclear \cite{CE1}, forcing $G$ to be amenable by \cite[Theorem 4.2]{L}.

\begin{remark} Our notions of projectivity and injectivity are closer in spirit to the approach taken in operator space theory \cite{Blech} and the recent approach of Helemskii \cite{Helem} rather than Banach algebra homology, where the related notions are usually studied solely from the relative perspective. In particular, we caution the reader that ``injectivity'' as defined in our previous work \cite{CN} coincides with relative 1-injectivity as defined above. \end{remark}

\section{Locally Compact Quantum Groups}

A \e{locally compact quantum group} is a quadruple $\G=(\LIQ,\Gam,\vphi,\psi)$, where $\LIQ$ is a Hopf-von Neumann algebra with co-multiplication $\Gam:\LIQ\rightarrow\LIQ\oten\LIQ$, and $\vphi$ and $\psi$ are fixed left and right Haar weights on $\LIQ$, respectively \cite{KV2,V}. For every locally compact quantum group $\G$, there exists a \e{left fundamental unitary operator} $W$ on $L_2(\G,\vphi)\ten L_2(\G,\vphi)$ and a \e{right fundamental unitary operator} $V$ on $L_2(\G,\psi)\ten L_2(\G,\psi)$ implementing the co-multiplication $\Gam$ via
\begin{equation*}\Gam(x)=W^*(1\ten x)W=V(x\ten 1)V^*, \ \ \ x\in\LIQ.\end{equation*}
Both unitaries satisfy the \e{pentagonal relation}; that is,
\begin{equation}\label{penta}W_{12}W_{13}W_{23}=W_{23}W_{12}\hs\hs\mathrm{and}\hs\hs V_{12}V_{13}V_{23}=V_{23}V_{12}.\end{equation}
By \cite[Proposition 2.11]{KV2}, we may identify $L_2(\G,\vphi)$ and $L_2(\G,\psi)$, so we will simply use $\LTQ$ for this Hilbert space throughout the paper. We denote by $R$ the unitary antipode of $\G$.

Let $\LOQ$ denote the predual of $\LIQ$. Then the pre-adjoint of $\Gam$ induces an associative completely contractive multiplication on $\LOQ$, defined by
\begin{equation*}\star:\LOQ\hten\LOQ\ni f\ten g\mapsto f\star g=\Gam_*(f\ten g)\in\LOQ.\end{equation*}
The multiplication $\star$ is a complete quotient map from $\LOQ\hten\LOQ$ onto $\LOQ$, implying
\begin{equation*}\la\LOQ\star\LOQ\ra=\LOQ.\end{equation*}
Moreover, $\LOQ$ is always self-induced. The proof is a simple application of \cite[Theorem 2.7]{Vaes}, but we provide the details for the convenience of the reader.

\begin{prop}\label{p:self-induced} Let $\G$ be a locally compact quantum group. Then $\LOQ$ is a self-induced completely contractive Banach algebra.
\end{prop}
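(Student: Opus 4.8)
The plan is to establish that the induced multiplication map $\widetilde{\star}:\LOQ\hten_{\LOQ}\LOQ\rightarrow\LOQ$ is a complete isometry onto $\LOQ$, and the natural tool is the criterion of \cite[Theorem 2.7]{Vaes}, which gives a sufficient condition for self-inducedness of a completely contractive Banach algebra in terms of a suitable approximate factorization. Concretely, I would recall that $\widetilde{\star}$ is automatically a complete quotient map onto $\LOQ$ (since $\star$ itself is a complete quotient map), so the entire content is to verify injectivity together with the norm estimate in the other direction, i.e. that $\widetilde{\star}$ is a complete contraction with completely contractive inverse.

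First I would set up the right-module and left-module structures of $\LOQ$ over itself, both given by $\star$, so that $\LOQ\hten_{\LOQ}\LOQ$ is the quotient of $\LOQ\hten\LOQ$ by the closed linear span $N$ of elements $f\star g\ten h - f\ten g\star h$. The key algebraic input is the coassociativity of $\Gam$, equivalently associativity of $\star$, which guarantees that $\star$ descends to a well-defined map $\widetilde{\star}$ on the quotient. Then I would invoke the structural fact about locally compact quantum groups that there is a bounded (indeed contractive) approximate identity for $\LOQ$ coming from states, or more precisely that the map $\LIQ\ten_{\LOQ}\LOQ\to\LIQ$ or the relevant slice-map factorization is isometric; the cleanest route is exactly Vaes's theorem, whose hypothesis is met because $W\in\LIQ\oten\BLTQ$ (or the analogous statement for the fundamental unitary) allows one to write every $f\in\LOQ$ as a limit of sums $\sum g_i\star h_i$ with control on $\sum \|g_i\|\|h_i\|$, uniformly — this is the manifestation of the pentagonal relation at the predual level.

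The main obstacle, and the step I would spend the most care on, is producing the completely isometric inverse: one must show that the natural map $\LOQ\to\LOQ\hten_{\LOQ}\LOQ$ obtained from such a factorization is well-defined on the quotient (independent of the chosen factorization, which is where $N$ does its work) and completely contractive. Here I would lean on the fact that for any $f\in\LOQ$ and the fundamental unitary $W$, one has $f = (f\ten\id)(W^*)\cdots$ type identities that let the factorization be taken canonically; combined with the density $\la\LOQ\star\LOQ\ra=\LOQ$ and the fact that $\star:\LOQ\hten\LOQ\to\LOQ$ is a complete quotient map, a standard diagram chase in the operator space category shows $\widetilde{\star}$ is a complete isometry. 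Since the excerpt explicitly says ``the proof is a simple application of \cite[Theorem 2.7]{Vaes}'', I expect the actual write-up to be short: verify the hypotheses of that theorem (the algebra is completely contractive, has the relevant factorization property witnessed by $W$), quote its conclusion, and note that complete isometry of $\widetilde{\star}$ is exactly the definition of self-induced.
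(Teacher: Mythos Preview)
Your proposal rests on a misreading of \cite[Theorem 2.7]{Vaes}. That result is not a criterion for self-inducedness via approximate factorization; it is the statement that an element $X\in\LIQ\oten\LIQ$ satisfying $(\Gam\ten\id)(X)=(\id\ten\Gam)(X)$ necessarily lies in $\Gam(\LIQ)$. Consequently, the whole ``approximate factorization'' plan is off target. More seriously, you invoke a (contractive) bounded approximate identity for $\LOQ$; such an approximate identity exists precisely when $\G$ is co-amenable, so your argument cannot go through for general locally compact quantum groups.

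The paper's route is entirely dual. One observes that the adjoint $\widetilde{m}^*:\LIQ\to(\LOQ\hten_{\LOQ}\LOQ)^*\cong N^{\perp}\subseteq\LIQ\oten\LIQ$ is nothing but $\Gam$, which is already a complete isometry. The only issue is surjectivity of $\widetilde{m}^*$, i.e.\ identifying $N^{\perp}$. But $X\in N^{\perp}$ means $\la X,f\star g\ten h\ra=\la X,f\ten g\star h\ra$ for all $f,g,h\in\LOQ$, which is exactly $(\Gam\ten\id)(X)=(\id\ten\Gam)(X)$; Vaes's theorem then gives $X\in\Gam(\LIQ)$. Thus $\widetilde{m}^*$ is a surjective complete isometry, hence a completely isometric isomorphism, and so is $\widetilde{m}$. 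No factorization or approximate identity is needed.
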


\begin{proof} Let $\widetilde{m}:\LOQ\hten_{\LOQ}\LOQ\rightarrow\LOQ$ be the induced multiplication map. Then $\widetilde{m}^*:\LIQ\rightarrow(\LOQ\hten_{\LOQ}\LOQ)^*$ is nothing but the co-multiplication $\Gam$. Since $(\LOQ\hten_{\LOQ}\LOQ)^*\cong N^{\perp}$, where $N\subseteq\LOQ\hten\LOQ$ is the closed linear span of $\{f\star g\ten h-f\ten g\star h\mid f,g,h\in\LOQ\}$, given $X\in(\LOQ\hten_{\LOQ}\LOQ)^*\subseteq\LIQ\oten\LIQ$, it follows that $(\Gam\ten\id)(X)=(\id\ten\Gam)(X)$. Hence, $X\in\Gam(\LIQ)$ by \cite[Theorem 2.7]{Vaes}, and $\widetilde{m}^*$ is surjective. Since $\widetilde{m}^*=\Gam$ is also a complete isometry, the result follows.\end{proof}

For any locally compact quantum group $\G$, the canonical $\LOQ$-bimodule structure on $\LIQ$ is given by
\begin{equation*}f\star x=(\id\ten f)\Gam(x)\hs\hs\mathrm{and}\hs\hs x\star f=(f\ten\id)\Gam(x)\end{equation*}
for $x\in\LIQ$, and $f\in\LOQ$. A \e{left invariant mean on $\LIQ$} is a state $m\in \LIQ^*$ satisfying
\begin{equation}\label{leftinv}\la m,x\star f \ra=\la f,1\ra\la m,x\ra, \ \ \ x\in\LIQ, \ f\in\LOQ.\end{equation}
Right and two-sided invariant means are defined similarly. A locally compact quantum group $\G$ is said to be \e{amenable} if there exists a left invariant mean on $\LIQ$. It is known that $\G$ is amenable if and only if there exists a right (equivalently, two-sided) invariant mean (cf. \cite[Proposition 3]{DQV}). We say that $\G$ is \e{co-amenable} if $\LOQ$ has a bounded left (equivalently, right or two-sided) approximate identity (cf. \cite[Theorem 3.1]{BT}).

The \e{left regular representation} $\lm:\LOQ\rightarrow\BLTQ$ of $\G$ is defined by
\begin{equation*}\lm(f)=(f\ten\id)(W), \ \ \ f\in\LOQ,\end{equation*}
and is an injective, completely contractive homomorphism from $\LOQ$ into $\BLTQ$. Then $\LIQH:=\{\lm(f) : f\in\LOQ\}''$ is the von Neumann algebra associated with the dual quantum group $\h{\G}$. Analogously, we have the \e{right regular representation} $\rho:\LOQ\rightarrow\BLTQ$ defined by
\begin{equation*}\rho(f)=(\id\ten f)(V), \ \ \ f\in\LOQ,\end{equation*}
which is also an injective, completely contractive homomorphism from $\LOQ$ into $\BLTQ$. Then $\LIQHP:=\{\rho(f) : f\in\LOQ\}''$ is the von Neumann algebra associated to the quantum group $\h{\G}'$. It follows that $\LIQHP=\LIQH'$, and the left and right fundamental unitaries satisfy $W\in\LIQ\oten\LIQH$ and $V\in\LIQHP\oten\LIQ$ \cite[Proposition 2.15]{KV2}. Moreover, dual quantum groups always satisfy $\LIQ\cap\LIQH=\LIQ\cap\LIQHP=\C1$ \cite[Proposition 3.4]{VD}.


If $G$ is a locally compact group, we let $\G_a=(\LI,\Gam_a,\vphi_a,\psi_a)$ denote the \e{commutative} quantum group associated with the commutative von Neumann algebra $\LI$, where the co-multiplication is given by $\Gam_a(f)(s,t)=f(st)$, and $\vphi_a$ and $\psi_a$ are integration with respect to a left and right Haar measure, respectively. The dual $\h{\G}_a$ of $\G_a$ is the \e{co-commutative} quantum group $\G_s=(\LG,\Gam_s,\vphi_s,\psi_s)$, where $\LG$ is the left group von Neumann algebra with co-multiplication $\Gam_s(\lm(t))=\lm(t)\ten\lm(t)$, and $\vphi_s=\psi_s$ is Haagerup's Plancherel weight (cf. \cite[\S VII.3]{T2}). Then $L^1(\G_a)$ is the usual group convolution algebra $\LO$, and $L^1(\G_s)$ is the Fourier algebra $A(G)$. It is known that every commutative locally compact quantum group is of the form $\G_a$ \cite[Theorem 2; \S2]{T,VV}. Therefore, every commutative locally compact quantum group is co-amenable, and is amenable if and only if the underlying locally compact group is amenable. By duality, every co-commutative locally compact quantum group is of the form $\G_s$, which is always amenable \cite[Theorem 4]{Re}, and is co-amenable if and only if the underlying locally compact group is amenable, by Leptin's theorem \cite{Lep}.

For a locally compact quantum group $\G$, we let $C_0(\G):=\overline{\h{\lm}(\LOQH)}^{\norm{\cdot}}$ denote the \e{reduced quantum group $C^*$-algebra} of $\G$. We say that $\G$ is \e{compact} if $C_0(\G)$ is a unital $C^*$-algebra, in which case we denote $C_0(\G)$ by $C(\G)$. We say that $\G$ is \e{discrete} if $\LOQ$ is unital. It is well-known that $\G$ is compact if and only if $\h{\G}$ is discrete, and in that case, $\LOQH\cong\bigoplus_{1} \{T_{n_\alpha}(\C)\mid\alpha\in\Irr\}$,
where $T_{n_\alpha}(\C)$ is the space of $n_\alpha\times n_\alpha$ trace-class operators, and $\Irr$ denotes the set of (equivalence classes of) irreducible co-representations of the compact quantum group $\G$ \cite{Wo}. For general $\G$, the operator dual $M(\G):=C_0(\G)^*$ is a completely contractive Banach algebra containing $\LOQ$ as a norm closed two-sided ideal via the map $\LOQ\ni f\mapsto f|_{C_0(\G)}\in M(\G)$ \cite{HNR2}.

We let $C_u(\G)$ be the \e{universal quantum group $C^*$-algebra} of $\G$, and denote the canonical surjective *-homomorphism by $\pi_u:C_u(\G)\rightarrow C_0(\G)$ \cite{K}. The space $C_u(\G)^*$ then has the structure of a unital completely contractive Banach algebra
such that the map $\LOQ\rightarrow C_u(\G)^*$ given by the composition of the inclusion $\LOQ\subseteq M(\G)$ and $\pi_u^*:M(\G)\rightarrow C_u(\G)^*$ is a completely isometric homomorphism, and it follows that $\LOQ$ is a norm closed two-sided ideal in $C_u(\G)^*$ \cite[Proposition 8.3]{K}.

Let $\G$ be a locally compact quantum group. An element $\hat{b}'\in\LIQH'$ is said to be a \e{completely bounded right multiplier} of $\LOQ$ if $\rho(f)\hat{b}'\in\rho(\LOQ)$ for all $f\in\LOQ$ and the induced map
\begin{equation*}m_{\hat{b}'}^r:\LOQ\ni f\mapsto\rho^{-1}(\rho(f)\hat{b}')\in\LOQ\end{equation*}
is completely bounded on $\LOQ$. We let $\McbQr$ denote the space of all completely bounded right multipliers of $\LOQ$, which is a completely contractive Banach algebra with respect to the norm
\begin{equation*}\norm{[\hat{b}'_{ij}]}_{M_n(\McbQr)}=\norm{[m^r_{\hat{b}_{ij}}]}_{cb}.\end{equation*}
Completely bounded left multipliers are defined analogously and we denote by $\McbQl$ the corresponding completely contractive Banach algebra. We now review the relevant properties of completely bounded multipliers, adopting the notation of \cite{JNR}.

Given $\hat{b}'\in\McbQr$, the adjoint $\Theta^r(\hat{b}'):=(m_{\hat{b}'}^r)^*$ defines a normal completely bounded right $\LOQ$-module map on $\LIQ$.
In general, the restriction $\Theta^r(\hat{b}')|_{C_0(\G)}$ leaves $C_0(\G)$ invariant by \cite[Proposition 4.1]{JNR}, and, together with \cite[Proposition 4.2]{JNR}, we have the completely isometric identifications
\begin{equation}\label{e:Mcbidentifications}\Theta^r:\McbQr\cong\mc{CB}^{\sigma}_{\LOQ}(\LIQ)\cong\mc{CB}_{\LOQ}(C_0(\G)).\end{equation}

It is known that $\McbQr$ is a dual operator space \cite[Theorem 3.5]{HNR2}, with predual $\QcbQr$. When $\G=\G_s$ is co-commutative, Haagerup and Kraus gave a representation for elements of $Q_{cb}(L^1(\G_s))=Q_{cb}(A(G))$ as $\Om_{A,\rho}$ for $A\in C^*_\lm(G)\ten_{\min}K_\infty$ and $\rho\in A(G)\hten T_\infty$ \cite[Proposition 1.5]{HK}, where
$$\la\vphi,\Om_{A,\rho}\ra=\la(\Theta^r(\vphi)\ten\id_{K_{\infty}})(A),\rho\ra, \ \ \ \vphi\in M_{cb}A(G);$$
$C^*_\lm(G)$ is the reduced $C^*$-algebra of $G$; the spaces $K_\infty$ and $T_\infty$ denote the compact and trace-class operators on a countably infinite-dimensional Hilbert space, respectively, and $\ten_{\min}$ denotes the minimum tensor product of $C^*$-algebras. This was later generalized to the setting of Kac algbras by Kraus and Ruan \cite[Theorem 3.3]{KR}. Relying upon the general result \cite[Lemma 1.6]{HK}, their argument readily extends to arbitrary locally compact quantum groups.

\begin{prop}\label{p:QcbQ} Let $\G$ be a locally compact quantum group. Then
$$\QcbQr=\{\Om_{A,\rho}\mid A\in C_0(\G)\ten_{\min}K_\infty, \ \rho\in \LOQ\hten T_\infty\},$$
where $\la\hat{b}',\Om_{A,\rho}\ra=\la(\Theta^r(\hat{b}')\ten\id_{K_{\infty}})(A),\rho\ra$, $\hat{b}'\in\McbQr$.
\end{prop}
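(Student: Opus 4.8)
The plan is to follow the strategy of Haagerup--Kraus \cite[Proposition 1.5]{HK} and its Kac-algebra generalization by Kraus--Ruan \cite[Theorem 3.3]{KR}, isolating exactly where co-commutativity or the Kac property was used and replacing those steps with facts available for arbitrary $\G$. First I would observe that, by the completely isometric identification \eqref{e:Mcbidentifications}, an element $\hat{b}'\in\McbQr$ is determined by the normal completely bounded right $\LOQ$-module map $\Theta^r(\hat{b}')$ on $\LIQ$, and that the assignment $\hat{b}'\mapsto\Theta^r(\hat{b}')\ten\id_{K_\infty}$ is completely isometric from $\McbQr$ into $\mc{CB}^\sigma_{\LOQ}(\LIQ\oten\mc{B}(\ell^2))$. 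Hence the bilinear pairing $(\hat{b}',(A,\rho))\mapsto\la(\Theta^r(\hat{b}')\ten\id_{K_\infty})(A),\rho\ra$ for $A\in C_0(\G)\ten_{\min}K_\infty$ and $\rho\in\LOQ\hten T_\infty$ is well defined and, for each fixed $(A,\rho)$, defines a weak*-continuous functional $\Om_{A,\rho}$ on $\McbQr$; this shows $\{\Om_{A,\rho}\}$ is contained in the predual $\QcbQr$, and linearity plus the norm estimate $\norm{\Om_{A,\rho}}\leq\norm{A}\norm{\rho}$ show it is a linear subspace of the correct space.

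Next I would show the span of the $\Om_{A,\rho}$ is norm dense in $\QcbQr$, equivalently (by bipolar) that it is weak*-dense and separates points of $\McbQr$. The key input here is the general lemma \cite[Lemma 1.6]{HK}, which the excerpt explicitly advertises as the tool that makes the Kraus--Ruan argument go through for arbitrary $\G$: it says (roughly) that a space of the form $\mc{CB}^\sigma_{\LOQ}(\LIQ)$, realized concretely inside $\mc{B}(\LTQ)$ via the comultiplication, has predual generated by the functionals built from $C_0(\G)\ten_{\min}K_\infty$ and $\LOQ\hten T_\infty$, provided one has (i) a concrete spatial description of $\Theta^r(\hat{b}')$ and (ii) the fact that $\Theta^r(\hat{b}')$ restricts to $C_0(\G)$ — which is precisely \cite[Proposition 4.1]{JNR}. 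So the step is: invoke \cite[Proposition 4.1]{JNR} to get $\Theta^r(\hat{b}')|_{C_0(\G)}\in\mc{CB}_{\LOQ}(C_0(\G))$, then feed the resulting concrete representation of multipliers into \cite[Lemma 1.6]{HK} to conclude that $\mathrm{span}\{\Om_{A,\rho}\}$ is dense in $\QcbQr$ and that the pairing is nondegenerate.

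Finally I would argue that the span is already all of $\QcbQr$, not merely dense. Here I would use that $C_0(\G)\ten_{\min}K_\infty$ and $\LOQ\hten T_\infty$ are honest Banach spaces and that the map $(A,\rho)\mapsto\Om_{A,\rho}$ factors through the operator space projective tensor product $(C_0(\G)\ten_{\min}K_\infty)\hten(\LOQ\hten T_\infty)$ — or, more efficiently, through $C_0(\G)\hten_{\LOQ}\LOQ$ after absorbing the $K_\infty$, $T_\infty$ factors by stability of $\hten$ under tensoring with $\mc{B}(\ell^2)$, $T_\infty$. Since a bounded linear image of a Banach space that is norm-dense in another Banach space, and which is itself complete (being a continuous image of a quotient of a projective tensor product, hence with closed range once one checks the induced map is a quotient map onto its closure), must be everything; concretely, one shows the induced map onto $\QcbQr$ is a complete quotient map, exactly as in the Haagerup--Kraus and Kraus--Ruan proofs. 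I expect the main obstacle to be verifying that the module and normality hypotheses of \cite[Lemma 1.6]{HK} are met in the fully general (non-Kac, non-co-commutative) setting — that is, checking that the only place the earlier proofs used the Kac condition was to guarantee $\Theta^r(\hat{b}')$ leaves $C_0(\G)$ invariant and is implemented spatially, and that \cite[Proposition 4.1]{JNR} now supplies exactly this; once that bookkeeping is done, the rest is the standard duality/density argument and carries over verbatim.
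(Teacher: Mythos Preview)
Your proposal is correct and matches the paper's approach exactly: the paper does not give a detailed proof but simply asserts that the Kraus--Ruan argument \cite[Theorem 3.3]{KR} ``readily extends to arbitrary locally compact quantum groups'' via the general result \cite[Lemma 1.6]{HK}, which is precisely the strategy you outline. Your identification of \cite[Proposition 4.1]{JNR} as the ingredient replacing any Kac-specific steps is the right bookkeeping.
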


\section{$\TCQ\curvearrowright\BLTQ$}

Let $\G$ be a locally compact quantum group. The right fundamental unitary $V$ of $\G$ induces a co-associative co-multiplication
\begin{equation*}\Gam^r:\BLTQ\ni T\mapsto V(T\ten 1)V^*\in\BLTQ\oten\BLTQ,\end{equation*}
and the restriction of $\Gam^r$ to $\LIQ$ yields the original co-multiplication $\Gam$ on $\LIQ$. The pre-adjoint of $\Gam^r$ induces an associative completely contractive multiplication on the space of trace-class operators $\TCQ$, defined by
\begin{equation*}\rhd:\TCQ\hten\TCQ\ni\om\ten\tau\mapsto\om\rhd\tau=\Gam^r_*(\om\ten\tau)\in\TCQ.\end{equation*}
Since $\Gam^r$ is a complete isometry, it follows that $\Gam_*^r$ is a complete quotient map, so we have
\begin{equation}\label{e:densityofproducts}\TCQ=\la\TCQ\rhd\TCQ\ra.\end{equation}
Analogously, the left fundamental unitary $W$ of $\G$ induces a co-associative co-multiplication
\begin{equation*}\Gam^l:\BLTQ\ni T\mapsto W^*(1\ten T)W\in\BLTQ\oten\BLTQ,\end{equation*}
and the restriction of $\Gam^l$ to $\LIQ$ is also equal to $\Gam$. The pre-adjoint of $\Gam^l$ induces another associative completely contractive multiplication \begin{equation*}\lhd:\TCQ\hten\TCQ\ni\om\ten\tau\mapsto\om\lhd\tau=\Gam^l_*(\om\ten\tau)\in\TCQ.\end{equation*}

It was shown in \cite[Lemma 5.2]{HNR2} that the pre-annihilator $\LIQ_{\perp}$ of $\LIQ$ in $\TCQ$ is a norm closed two sided ideal in $(\TCQ,\rhd)$ and $(\TCQ,\lhd)$, respectively, and the complete quotient map
\begin{equation}\label{pi}\pi:\TCQ\ni\om\mapsto f=\om|_{\LIQ}\in\LOQ\end{equation}
is an algebra homomorphism from $(\TCQ,\rhd)$, respectively,
$(\TCQ,\lhd)$, onto $\LOQ$.

By \cite[Proposition 2.1]{KV2} the unitary antipode $R$ satisfies $R(x)=\widehat{J}x^*\widehat{J}$, for $x\in\LIQ$. It therefore extends to a *-anti-automorphism (still denoted) $R:\BLTQ\rightarrow\BLTQ$, via $R(T)=\h{J}T^*\h{J}$, $T\in\BLTQ$. The extended antipode maps $\LIQ$ and $\LIQH$ onto $\LIQ$ and $\LIQHP$, respectively, and satisfies the generalized antipode relations; that is,
\begin{equation}\label{R}( R\ten R)\circ\Gam^r=\Sigma\circ\Gam^l\circ R\hs\mathrm{and}
\hs( R\ten R)\circ\Gam^l=\Sigma\circ\Gam^r\circ R,\end{equation}
where $\Sigma$ is the flip map on $\BLTQ\oten\BLTQ$. At the level of $\TCQ$, the relations (\ref{R}) mean
$$ R_*(\om\rhd\tau)= R_*(\tau)\lhd R_*(\om)\hs\mathrm{and}\hs
 R_*(\om\lhd\tau)= R_*(\tau)\rhd R_*(\om)$$
for all $\om,\tau\in\TCQ$. We may therefore pass between the left and right products using $R$, and as a result, we will often focus on the right product $\rhd$ throughout the article.


Since $\LTQ\cong\LTQH$ for any locally compact quantum group $\G$, applying the above construction to the co-multiplication $\h{\Gam}$ on $\LIQH$ yields two \e{dual} products
\begin{align*}\wh{\rhd}&:\TCQ\hten\TCQ\ni\om\ten\tau\mapsto\om\wh{\rhd}\tau=\h{\Gam}^r_*(\om\ten\tau)\in\TCQ,\\
\wh{\lhd}&:\TCQ\hten\TCQ\ni\om\ten\tau\mapsto\om\wh{\lhd}\tau=\h{\Gam}^l_*(\om\ten\tau)\in\TCQ.\end{align*}
This lifting of quantum group convolution to $\TCQ$ allows one to study properties of $\G$ and $\widehat{\G}$ as well as their interactions a \e{single space}. One such interaction was obtained in \cite{KN}, and states that the dual products anti-commute.

\begin{thrm}\cite[Theorem 3.3]{KN} Let $\G$ be a locally compact quantum group. Then for every $\rho,\om,\tau\in\TCQ$ we have
\begin{equation}\label{e:commutationrelation}(\rho \rhd \om) \wh{\rhd} \tau=(\rho \wh{\rhd} \tau) \rhd \om.\end{equation}
\end{thrm}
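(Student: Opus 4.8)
The plan is to prove the commutation relation \eqref{e:commutationrelation} by unravelling both sides in terms of the fundamental unitaries $W$ and $V$ and their hatted counterparts $\h{W}$, $\h{V}$, then reducing everything to the pentagonal relations \eqref{penta} and the standard intertwining relations between the four unitaries. First I would recall the explicit formulas: for $\om,\tau\in\TCQ$ the product $\om\rhd\tau$ satisfies $\la\om\rhd\tau,T\ra=\la\om\ten\tau,V(T\ten 1)V^*\ra$, so on the level of operators $\om\rhd\tau=(\id\ten\tau)(V^*(\cdot\ten 1)V)$ viewed in the appropriate predual sense; similarly $\om\wh{\rhd}\tau$ is governed by $\h{V}$, which acts on $\LTQ\cong\LTQH$. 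The key structural input is that $\h{V}$ lives in $\LIQH\oten\LIQ$-type position relative to $V\in\LIQHP\oten\LIQ$, and more precisely there are commutation identities (essentially $V_{13}$ commuting with $\h{V}_{12}$ up to a pentagon-type move, together with $W=\Sigma\h{W}^*\Sigma$ and the relation between $V$ and $W$ via the unitary antipode) that let one interchange the order in which the two coproducts $\Gam^r$ and $\h{\Gam}^r$ are applied.

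The core computation I would carry out: apply $(\rho\rhd\om)\wh{\rhd}\tau$ to a test operator $T\in\BLTQ$ and expand. One gets an expression of the form $\la\rho\ten\om\ten\tau, X\ra$ where $X$ is a product of ampliations of $V$ and $\h{V}$ acting on $\LTQ^{\ten 3}$ — schematically something like $\h{V}_{\bullet}V_{\bullet}(T\ten1\ten1)V_{\bullet}^*\h{V}_{\bullet}^*$ on appropriate legs. Doing the same for $(\rho\wh{\rhd}\tau)\rhd\om$ produces a similar expression with the roles of $V$ and $\h{V}$ legs permuted. The two agree precisely when one can slide a leg of $\h{V}$ past a leg of $V$; this is exactly where a pentagon relation (or the known commutation of $V_{12}$ with $\h{V}_{13}$, say) is consumed. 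I would set up the bookkeeping of legs carefully — this is the step most prone to indexing errors — and then verify that after all allowed moves the two operator expressions coincide on the nose, whence the functionals agree for all $\rho,\om,\tau$.

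By density \eqref{e:densityofproducts} and continuity of the products it suffices to check the identity on a dense set, which gives some freedom in choosing $\rho,\om,\tau$ of convenient form, e.g. vector functionals, so that all manipulations take place among bounded operators on $\LTQ^{\ten 3}$ rather than in preduals. Alternatively — and this may be cleaner — one can dualize: the claimed identity on $\TCQ$ is equivalent to an identity of the associated coproducts on $\BLTQ$, namely a compatibility of $\Gam^r$ and $\h{\Gam}^r$ of the shape $(\id\ten\Gam^r)\circ\h{\Gam}^r$ versus $\Sigma_{23}\circ(\h{\Gam}^r\ten\id)\circ\Gam^r$ or similar, which can be checked directly on generators using $\Gam^r(x)=V(x\ten1)V^*$ and $\h{\Gam}^r(x)=\h{V}(x\ten1)\h{V}^*$.

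The main obstacle I anticipate is purely organizational rather than conceptual: correctly tracking which Hilbert-space leg each copy of $V$ and $\h{V}$ acts on after the coproducts are iterated, and identifying precisely which pentagonal move (for $V$, for $\h{V}$, or the mixed $V$–$\h{V}$ commutation coming from $\h{V}=\Sigma(\h{J}\ten\h{J})V^*(\h{J}\ten\h{J})\Sigma$-type relations) is invoked at each interchange. Once the leg diagram is drawn correctly, the identity should fall out of \eqref{penta} together with the known relations $W\in\LIQ\oten\LIQH$, $V\in\LIQHP\oten\LIQ$ and their hatted analogues; but getting that diagram right, and checking that no relation beyond the pentagon and the basic intertwiners is needed, is the delicate part.
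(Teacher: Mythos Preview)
The paper does not prove this theorem at all: it is stated with a citation to \cite[Theorem 3.3]{KN} and used as a black box. So there is no ``paper's own proof'' to compare against beyond the external reference.

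Your approach is the correct one and is essentially what is carried out in \cite{KN}. The cleanest route is the dualization you mention at the end: pairing both sides against an arbitrary $T\in\BLTQ$, the identity \eqref{e:commutationrelation} is equivalent to
\[
(\Gam^r\ten\id)\circ\h{\Gam}^r \;=\; \sigma_{23}\circ(\h{\Gam}^r\ten\id)\circ\Gam^r
\]
on $\BLTQ$, which in turn unwinds to the operator identity
\[
V_{12}\,\h{V}_{13}\,(T\ten 1\ten 1)\,\h{V}_{13}^*\,V_{12}^*
\;=\;
\h{V}_{13}\,V_{12}\,(T\ten 1\ten 1)\,V_{12}^*\,\h{V}_{13}^*
\]
for all $T$. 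One word of caution: this does \emph{not} follow merely from the coarse information $V\in\LIQHP\oten\LIQ$ and $\h{V}\in\LIQ'\oten\LIQH$, since $\LIQHP$ and $\LIQ'$ do not commute in the first leg. What is actually used in \cite{KN} is precisely the ``mixed $V$--$\h{V}$ commutation'' you flag as the delicate step, obtained by expressing $\h{V}$ (equivalently $V$) in terms of $W$ and the modular conjugations $J,\h{J}$ and then invoking the pentagonal relation \eqref{penta}; once that identity is in hand the rest is bookkeeping. So your diagnosis of where the content lies is accurate, and your sketch would go through once that single relation is pinned down.
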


Equation (\ref{e:commutationrelation}) has an important consequence (Proposition \ref{p:rightimpliesleft}) that will be used in the proof of the main result.

For a locally compact quantum group $\G$, the multiplications $\rhd$ and $\lhd$ define operator $\TCQ$-bimodule structures on $\BLTQ$ such that
for $x\in\LIQ$ and $f=\om|_{\LIQ}$ with $\om\in\TCQ$, we have
\begin{equation}\label{mod1}x\rhd\om=x\lhd\om=x\star f\hs\mathrm{and}\hs\om\lhd x=\om\rhd x=f\star x.\end{equation}
The bimodule actions of $(\TCQ,\rhd)$ and $(\TCQ,\lhd)$ on $\BLTQ$ are therefore liftings of the usual bimodule action of $\LOQ$ on $\LIQ$. For details on these bimodules we refer the reader to \cite{CN,HNR2}. In what follows we denote the algebra of completely bounded right $(\TCQ,\rhd)$-module (respectively, left $(\TCQ,\wh{\rhd})$-module) maps by $\CBTCrr$ (respectively, $\CBTClhr$).

In \cite[Remark 7.4]{HNR2}, the authors observe that for co-amenable $\G$ we have
\begin{equation*}\CBTCrr\subseteq\CBLTQ,\end{equation*}
where $\CBLTQ$ is the algebra of completely bounded $\LIQH$-bimodule maps on $\BLTQ$ that leave $\LIQ$ globally invariant. As a corollary to the commutation relation (\ref{e:commutationrelation}), we can remove the co-amenability hypothesis in the above inclusion using the following ``automatic'' module property.

\begin{prop}\label{p:rightimpliesleft} Let $\G$ be locally compact quantum group. Then
$$\CBTCrr\subseteq \ \CBTClhr.$$
\end{prop}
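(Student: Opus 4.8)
The plan is to show that every completely bounded right $(\TCQ,\rhd)$-module map $\Phi$ on $\BLTQ$ is automatically a left $(\TCQ,\wh{\rhd})$-module map, i.e. $\Phi(\om\wh{\rhd}\,T)=\om\wh{\rhd}\,\Phi(T)$ for all $\om\in\TCQ$ and $T\in\BLTQ$. The key leverage is the commutation relation (\ref{e:commutationrelation}), which says $(\rho\rhd\om)\wh{\rhd}\tau=(\rho\wh{\rhd}\tau)\rhd\om$ for all $\rho,\om,\tau\in\TCQ$. First I would note that, since the module actions of $\TCQ$ on $\BLTQ$ extend the actions on $\TCQ$ itself (via the pre-adjoints of the various co-multiplications), the identity (\ref{e:commutationrelation}) should promote to a mixed associativity statement of the form $(\om\wh{\rhd}\,T)\rhd\sigma = \om\wh{\rhd}(T\rhd\sigma)$ for $\om,\sigma\in\TCQ$ and $T\in\BLTQ$ --- one checks this first on $T\in\TCQ$ using (\ref{e:commutationrelation}) directly (after identifying the action by an element of $\TCQ$ on a trace-class operator with the appropriate convolution), and then extends to all $T\in\BLTQ$ by weak* density of $\TCQ$ in $\BLTQ$ together with weak* continuity of the relevant module maps.

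With that mixed associativity in hand, the argument is as follows. Fix $\Phi\in\CBTCrr$, $\om\in\TCQ$, $T\in\BLTQ$, and $\sigma\in\TCQ$. Using that $\Phi$ is a right $\rhd$-module map and then the mixed associativity in both directions,
\begin{align*}
\Phi\big((\om\wh{\rhd}\,T)\rhd\sigma\big) &= \Phi(\om\wh{\rhd}\,T)\rhd\sigma,\\
\Phi\big(\om\wh{\rhd}(T\rhd\sigma)\big) &= ?,
\end{align*}
where the point is that the left-hand sides agree by mixed associativity, so $\Phi(\om\wh{\rhd}\,T)\rhd\sigma=\Phi(\om\wh{\rhd}(T\rhd\sigma))$. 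Now I would run the same computation with $\om\wh{\rhd}\Phi(T)$ in place of $\Phi(\om\wh{\rhd}\,T)$: again by mixed associativity $(\om\wh{\rhd}\Phi(T))\rhd\sigma=\om\wh{\rhd}(\Phi(T)\rhd\sigma)=\om\wh{\rhd}(\Phi(T\rhd\sigma))$, where the last equality uses that $\Phi$ is a right $\rhd$-module map. Comparing, both $\Phi(\om\wh{\rhd}\,T)\rhd\sigma$ and $(\om\wh{\rhd}\Phi(T))\rhd\sigma$ equal $\om\wh{\rhd}\Phi(T\rhd\sigma)$ for \emph{every} $\sigma\in\TCQ$. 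Since $\TCQ=\la\TCQ\rhd\TCQ\ra$ by (\ref{e:densityofproducts}) --- and hence $\BLTQ\rhd\TCQ$ is weak* dense enough to separate, or more precisely since the pairing of $\BLTQ$ with $\TCQ$ is nondegenerate and $S\rhd\sigma$ ranges over a total set --- I can conclude $\Phi(\om\wh{\rhd}\,T)=\om\wh{\rhd}\Phi(T)$.

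The main obstacle I anticipate is the passage from the purely algebraic identity (\ref{e:commutationrelation}) on $\TCQ$ to the mixed associativity statement involving a general operator $T\in\BLTQ$: one must be careful that the two ``outer'' actions (the $\wh{\rhd}$ on the left by $\om$ and the $\rhd$ on the right by $\sigma$) are each separately weak*-weak* continuous on $\BLTQ$, so that a weak* approximation of $T$ by trace-class operators is legitimate, and that the identifications of module actions with co-multiplications are the ones actually used in \cite[Theorem 3.3]{KN}. The final cancellation step --- deducing $\Phi(\om\wh{\rhd}\,T)=\om\wh{\rhd}\Phi(T)$ from equality after applying $\rhd\sigma$ for all $\sigma$ --- is routine given (\ref{e:densityofproducts}) and the separation properties of the $\TCQ$--$\BLTQ$ pairing, but it is worth spelling out to make sure no nondegeneracy is being silently assumed.
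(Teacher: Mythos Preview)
Your overall strategy --- promote the commutation relation (\ref{e:commutationrelation}) to an identity on $\BLTQ$, apply the right $\rhd$-module property of $\Phi$, and conclude via (\ref{e:densityofproducts}) --- is exactly the paper's strategy. The gap is that the specific identity you write down is the wrong one.

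The ``mixed associativity'' $(\om\wh{\rhd}\,T)\rhd\sigma = \om\wh{\rhd}(T\rhd\sigma)$ is false in general. Indeed, since $V\in\LIQHP\oten\LIQ$, one has $T\rhd\sigma=(\sigma\ten\id)V(T\ten 1)V^*\in\LIQ$ for \emph{every} $T\in\BLTQ$; and for $x\in\LIQ$ the left $\wh{\rhd}$-action is trivial because $\h V\in\LIQ'\oten\LIQH$, so $\om\wh{\rhd}(T\rhd\sigma)=\la\om,1\ra(T\rhd\sigma)$. Your identity would therefore force $(\om\wh{\rhd}T)\rhd\sigma=\la\om,1\ra(T\rhd\sigma)$, which by (\ref{e:densityofproducts}) gives $\om\wh{\rhd}T=\la\om,1\ra T$ for all $T$ --- absurd. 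Relatedly, your proposed verification ``first on $T\in\TCQ$'' conflates two different things: the algebra product in $(\TCQ,\wh\rhd)$ and the restriction to $\TCQ\subseteq\BLTQ$ of the dual module action; these do not agree, so the density argument does not get off the ground. There is also a circularity in your final step: from $\Phi(\om\wh{\rhd}T)\rhd\sigma=\Phi(\om\wh{\rhd}(T\rhd\sigma))$ you cannot reach $\om\wh{\rhd}\Phi(T\rhd\sigma)$ without already knowing $\Phi$ is a left $\wh\rhd$-module map.

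The correct lift of (\ref{e:commutationrelation}) to $\BLTQ$ is obtained by duality, not by density: pairing with an arbitrary $\mu\in\TCQ$ and using $\la S\rhd\tau,\mu\ra=\la S,\tau\rhd\mu\ra$, $\la\rho\wh{\rhd}S,\mu\ra=\la S,\mu\wh{\rhd}\rho\ra$ together with (\ref{e:commutationrelation}) gives
\[
(\rho\wh{\rhd}T)\rhd\tau \;=\; T\rhd(\tau\wh{\rhd}\rho)\qquad(T\in\BLTQ,\ \rho,\tau\in\TCQ).
\]
With this identity the remainder of your outline runs cleanly: for $\Phi\in\CBTCrr$,
\[
\Phi(\rho\wh{\rhd}T)\rhd\tau=\Phi\big((\rho\wh{\rhd}T)\rhd\tau\big)=\Phi\big(T\rhd(\tau\wh{\rhd}\rho)\big)=\Phi(T)\rhd(\tau\wh{\rhd}\rho)=(\rho\wh{\rhd}\Phi(T))\rhd\tau,
\]
and (\ref{e:densityofproducts}) finishes as you intended. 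This is precisely the paper's proof.
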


\begin{proof} Let $\Phi\in\CBTCrr$, and fix $\rho\in\TCQ$ and $T\in\BLTQ$. Then for any $\om,\tau\in\TCQ$, we have
\begin{align*}\la(\rho\wh{\rhd} T) \rhd \tau,\om\ra&=\la\rho\wh{\rhd}T,\tau\rhd\om\ra=\la T,(\tau\rhd\om)\wh{\rhd}\rho\ra\\
&=\la T,(\tau\wh{\rhd}\rho)\rhd\om\ra=\la T\rhd(\tau\wh{\rhd}\rho),\om\ra.\end{align*}
Thus,
\begin{align*}\la\Phi(\rho\wh{\rhd}T),\tau\rhd\om\ra&=\la\Phi(\rho\wh{\rhd}T)\rhd\tau,\om\ra=\la\Phi((\rho\wh{\rhd}T)\rhd\tau),\om\ra\\
&=\la\Phi(T\rhd(\tau\wh{\rhd}\rho)),\om\ra=\la\Phi(T)\rhd(\tau\wh{\rhd}\rho),\om\ra\\
&=\la\Phi(T),(\tau\wh{\rhd}\rho)\rhd\om\ra=\la\Phi(T),(\tau\rhd\om)\wh{\rhd}\rho\ra\\
&=\la\rho\wh{\rhd}\Phi(T),\tau\rhd\om\ra.\end{align*}
By (\ref{e:densityofproducts}) it follows that $\Phi(\rho\wh{\rhd}T)=\rho\wh{\rhd}\Phi(T)$, as required.\end{proof}

\begin{cor}\label{c:inclusion} For any locally compact quantum group $\G$, we have
\begin{equation*}\CBTCrr\subseteq \ \CBLTQ.\end{equation*}
\end{cor}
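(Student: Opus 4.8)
The plan is to combine Proposition \ref{p:rightimpliesleft} with its mirror-image statements obtained by applying the unitary antipode $R$, so that a map in $\CBTCrr$ is forced to be a module map for all four of the products $\rhd$, $\lhd$, $\wh{\rhd}$, $\wh{\lhd}$ simultaneously; the $\LIQH$-bimodule property and the invariance of $\LIQ$ then follow from the concrete identifications in \eqref{mod1} and their dual analogues. First I would recall that, by \cite[Remark 7.4]{HNR2} combined with Proposition \ref{p:rightimpliesleft}, the only obstruction to the inclusion $\CBTCrr \subseteq \CBLTQ$ that \cite{HNR2} could not remove was the passage from a one-sided $(\TCQ,\rhd)$-module map to a genuine $\LIQH$-\emph{bimodule} map; Proposition \ref{p:rightimpliesleft} supplies the missing left $(\TCQ,\wh{\rhd})$-module property, and the antipode relations \eqref{R} convert this into the remaining module properties.

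Concretely, let $\Phi \in \CBTCrr$. By Proposition \ref{p:rightimpliesleft}, $\Phi$ is also a left $(\TCQ,\wh{\rhd})$-module map. To see that $\Phi$ is a right $(\TCQ,\lhd)$-module map, I would conjugate by $R$: since $R_*(\om \rhd \tau) = R_*(\tau) \lhd R_*(\om)$, the map $R \circ \Phi \circ R$ is again a (left or right) $(\TCQ,\rhd)$-module map, hence by Proposition \ref{p:rightimpliesleft} a $(\TCQ,\wh{\rhd})$-module map, and undoing the conjugation (using that $R$ maps $\LIQH$ onto $\LIQHP$ and intertwines $\wh{\rhd}$ with $\wh{\lhd}$ at the level of $\TCQ$) shows $\Phi$ is a module map for $\lhd$ and for $\wh{\lhd}$ as well. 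At this point $\Phi$ is a two-sided module map over $(\TCQ,\wh{\rhd})$ and $(\TCQ,\wh{\lhd})$; restricting the dual-product actions via the dual version of \eqref{mod1} — namely $\h{x}\,\wh{\rhd}\,\om = \h{x}\,\wh{\lhd}\,\om = \h{x}\star\h{f}$ and $\om\,\wh{\lhd}\,\h{x} = \om\,\wh{\rhd}\,\h{x} = \h{f}\star\h{x}$ for $\h{x} \in \LIQH$ — together with the fact that the dual-product actions of $\TCQ$ generate (weak* densely) the $\LIQH$-bimodule actions on $\BLTQ$, upgrades $\Phi$ to a normal $\LIQH$-bimodule map on $\BLTQ$. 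Finally, global invariance of $\LIQ$ under $\Phi$ follows because $\LIQ$ is the weak* closure of $\TCQ \rhd \LIQ$ (by \eqref{mod1}, $x \rhd \om = x \star f \in \LIQ$) and $\Phi$ commutes with $\rhd$; hence $\Phi \in \CBLTQ$.

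The main obstacle I anticipate is bookkeeping rather than conceptual: one must be careful that the passage from ``module map over $\TCQ$ for all four products'' to ``$\LIQH$-bimodule map on all of $\BLTQ$ leaving $\LIQ$ invariant'' genuinely uses the right density statements — i.e. that the dual convolution products lift the $\LIQH$-bimodule action faithfully, and that weak* continuity is available so that one can pass from the dense subalgebra of actions coming from $\TCQ$ to all normal $\LIQH$-module actions. This is where the structure theory recalled in Section 3 (especially $W \in \LIQ \oten \LIQH$, $\LIQ \cap \LIQH = \C 1$, and the identifications of \eqref{mod1}) does the work, and I would expect the write-up to lean on \cite{CN,HNR2} for these module-theoretic facts rather than reprove them.
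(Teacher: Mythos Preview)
There are two genuine gaps in your proposal.

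\textbf{The $\LIQH$-bimodule step conflates two different actions.} The dual products $\wh{\rhd},\wh{\lhd}$ lift the \emph{convolution} action of $\LOQH$ on $\LIQH$, not left/right multiplication by elements of $\LIQH$ on $\BLTQ$; these are unrelated module structures, and there is no ``density'' statement making one generate the other. (Also, your $R$-conjugation does not produce another right $\rhd$-module map to which Proposition~\ref{p:rightimpliesleft} applies: one checks $R(T\rhd\rho)=R_*(\rho)\lhd R(T)$, so $R\Phi R$ is a \emph{left} $\lhd$-module map.) The paper bypasses all of this: the $\LIQH$-bimodule property follows \emph{directly} from the right $\rhd$-module property alone, because $V\in\LIQHP\oten\LIQ=\LIQH'\oten\LIQ$ gives $(\h{x}T\h{y})\rhd\rho = T\rhd(\h{y}\cdot\rho\cdot\h{x})$ for $\h{x},\h{y}\in\LIQH$.

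\textbf{The invariance argument is vacuous.} Since $V\in\LIQHP\oten\LIQ$, one has $T\rhd\om\in\LIQ$ for \emph{every} $T\in\BLTQ$, so knowing $\Phi(x)\rhd\om=\Phi(x\rhd\om)$ tells you nothing about $\Phi(x)$. The paper uses Proposition~\ref{p:rightimpliesleft} precisely here (and only here): since $\h{V}\in\LIQ'\oten\LIQH$, for $x\in\LIQ$ one has $\rho\,\wh{\rhd}\,x=\la\rho,1\ra x$, so the left $\wh{\rhd}$-module property gives $\rho\,\wh{\rhd}\,\Phi(x)=\la\rho,1\ra\Phi(x)$, i.e.\ $\h{V}(\Phi(x)\ten1)\h{V}^*=\Phi(x)\ten1$, whence $\Phi(x)$ commutes with $\h{\rho}(\LOQH)$ and thus lies in $(\LIQ')'=\LIQ$.
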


\begin{proof} Let $\Phi\in\CBTCrr$, and $\hat{x},\hat{y}\in\LIQH$. Then for any $\rho\in\TCQ$ and $T\in\BLTQ$ we have
\begin{align*}(\hat{x}T\hat{y})\rhd\rho&=(\rho\ten\id)V(\hat{x}T\hat{y}\ten1)V^*=(\rho\ten\id)((\hat{x}\ten 1)V(T\ten1)V^*(\hat{y}\ten1))\\
&=(\hat{y}\cdot\rho\cdot\hat{x}\ten\id)V(T\ten 1)V^*=T\rhd(\hat{y}\cdot\rho\cdot\hat{x}).\end{align*}
Thus, for any $\om\in\TCQ$ we obtain
\begin{align*}\la\Phi(\hat{x}T\hat{y}),\rho\rhd\om\ra&=\la\Phi(\hat{x}T\hat{y})\rhd\rho,\om\ra=\la\Phi((\hat{x}T\hat{y})\rhd\rho),\om\ra\\
&=\la\Phi(T\rhd(\hat{y}\cdot\rho\cdot\hat{x})),\om\ra=\la\Phi(T)\rhd(\hat{y}\cdot\rho\cdot\hat{x}),\om\ra\\
&=\la(\hat{x}\Phi(T)\hat{y})\rhd\rho,\om\ra=\la\hat{x}\Phi(T)\hat{y},\rho\rhd\om\ra.\end{align*}
Again by (\ref{e:densityofproducts}), it follows that $\Phi$ is an $\LIQH$-bimodule map on $\BLTQ$.

By Proposition \ref{p:rightimpliesleft} $\Phi\in \ \CBTClhr$, and since $\h{V}\in\LIQ'\oten\LIQH$, for any $x\in\LIQ$ and $\rho\in\TCQ$ we have
$$(\id\ten\rho)\h{V}(\Phi(x)\ten1)\h{V}^*=\rho \wh{\rhd} \Phi(x)=\Phi(\rho \wh{\rhd} x)=\la\rho,1\ra\Phi(x)=(\id\ten\rho)(\Phi(x)\ten1).$$
It follows that $\h{V}(\Phi(x)\ten1)\h{V}^*=\Phi(x)\ten1$, which implies that $\h{\rho}(\hat{f})\Phi(x)=\Phi(x)\h{\rho}(\hat{f})$ for every
$\hat{f}\in\LOQH$. Since $\h{\rho}(\LOQH)$ is weak* dense in $\LIQ'$, we have $\Phi(x)\in\LIQ''=\LIQ$. Thus, $\Phi$ leaves $\LIQ$ globally
invariant, and the claim follows.\end{proof}

In \cite{CN}, we studied the existence of conditional expectations $E:\BLTQ\rightarrow\LIQH$ commuting with the four $\TCQ$-module structures on $\BLTQ$ arising from $\G$. We now complete this picture by studying the four remaining $\TCQ$-module structures on $\BLTQ$ arising from $\h{\G}$. We denote by $\CBTClhl$ and $\CBTCrhl$ the algebra of completely bounded left and right $(\TCQ,\wh{\lhd})$-module
maps on $\BLTQ$, respectively, and similarly for $(\TCQ,\wh{\rhd})$.

\begin{prop}\label{p:rightdual} Let $\G$ be a locally compact quantum group. There exists a conditional expectation $E:\BLTQ\rightarrow\LIQH$ in $\CBTCrhr$ if and only if $\G=\C1$.\end{prop}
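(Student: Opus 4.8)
The plan is to show that the existence of such a conditional expectation forces the dual group $\widehat{\G}$ to be trivial in a strong sense, using the $(\TCQ,\wh{\rhd})$-module structure that $E$ must respect. The key observation is that a conditional expectation $E:\BLTQ\rightarrow\LIQH$ lying in $\CBTCrhr$ is, by definition, a completely bounded right $(\TCQ,\wh{\rhd})$-module map with range $\LIQH$, and by Proposition~\ref{p:rightimpliesleft} applied to $\widehat{\G}$ (i.e.\ the analogue of that proposition with the roles of $\G$ and $\widehat{\G}$ interchanged), $E$ is automatically a left $(\TCQ,\rhd)$-module map as well. Recalling from \eqref{mod1} that for $\hat{x}\in\LIQH$ and $\hat{f}=\hat{\om}|_{\LIQH}$ the dual module actions restrict to the canonical bimodule action of $\LOQH$ on $\LIQH$, and that on all of $\BLTQ$ the $\wh{\rhd}$-actions are implemented by $\h{V}\in\LIQHP\oten\LIQH$, I would first translate the module-map condition into a spatial commutation relation. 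Specifically, since $E$ is a conditional expectation onto $\LIQH$, for any $T\in\BLTQ$ we have $E(T)\in\LIQH$ and $E(\hat{x}T\hat{y})=\hat{x}E(T)\hat{y}$ for $\hat{x},\hat{y}\in\LIQH$ (by the $\LIQH$-bimodule property of a conditional expectation), while the dual-module-map property gives $\rho\,\wh{\rhd}\,E(T)=E(\rho\,\wh{\rhd}\,T)$ for all $\rho\in\TCQ$.

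Next I would exploit the \emph{other} dual product. Because $E$ ranges in $\LIQH$, on which the product $\wh{\rhd}$ restricts to convolution by $\LOQH$ via \eqref{leftinv}-type invariance, I expect to deduce that $E$ also intertwines the $\rhd$-action: running the argument of Corollary~\ref{c:inclusion} verbatim (with $\G$ and $\widehat{\G}$ swapped), one gets that $E$ is an $\LIQ$-bimodule map on $\BLTQ$ and leaves $\LIQ$ globally invariant. But $E$ is a projection onto $\LIQH$, so $E|_{\LIQ}:\LIQ\rightarrow\LIQ\cap\LIQH=\C1$ by \cite[Proposition 3.4]{VD}. Being an $\LIQ$-bimodule projection onto the centre of $\LIQ$ inside $\BLTQ$, restricted to $\LIQ$ it must be the canonical normalized trace-type state (or, more precisely, a conditional expectation $\LIQ\to\C1$), which is a \emph{state} $m$ on $\LIQ$. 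I would then check that the left $(\TCQ,\rhd)$-module property of $E$ forces $m$ to be left-invariant in the sense of \eqref{leftinv}: for $x\in\LIQ$ and $f=\om|_{\LIQ}$, $E(x\star f)=E(x)\star f$, and since $E(x)\in\C1$ and $1\star f=\la f,1\ra 1$, this gives $\la m, x\star f\ra\,1=\la f,1\ra\la m,x\ra\,1$.

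Symmetrically—using the right-handed version, or applying $R$ as in the discussion around \eqref{R}—the $\rhd$ and $\lhd$ module properties together yield that $m$ is a two-sided invariant mean on $\LIQ$, \emph{and} that the analogous restriction of $E$ to $\LIQHP=\LIQH'$ produces an invariant mean on $\LIQH$. Having an invariant mean on $\LIQH$ that comes from a normal (weak* continuous, since $E$ is normal as a conditional expectation onto a von Neumann subalgebra—or at least I would argue $E|_{\LIQH}=\id$ forces the relevant piece to be normal) projection is the crux: a \emph{normal} invariant mean on $\LIQ$ exists if and only if $\G$ is compact with trivial—hence I would instead push for the conclusion directly. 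Concretely, $E(1)=1$ and the commutation $\h{V}(E(T)\ten 1)\h{V}^*$ identity from the $\CBTClhl$-reasoning (Proposition~\ref{p:rightimpliesleft} for $\widehat{\G}$) forces $E(T)$ to commute with all of $\h{\rho}(\LOQH)''=\LIQ'$, so $E(T)\in\LIQ''\cap\LIQH=\LIQ\cap\LIQH=\C1$. Thus $E(\BLTQ)=\LIQH=\C1$, which is exactly $\G=\C1$; conversely if $\G=\C1$ then $\BLTQ=\C1=\LIQH$ and $E=\id$ trivially works.

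The main obstacle I anticipate is the bookkeeping of the four dual module actions and making sure Proposition~\ref{p:rightimpliesleft} and Corollary~\ref{c:inclusion} are applied to $\widehat{\G}$ with the correct handedness; in particular one must be careful that a conditional expectation onto $\LIQH$ is automatically an $\LIQH$-bimodule map (this is standard, by the module property of conditional expectations, but it is the hinge that lets the Corollary~\ref{c:inclusion} argument go through), and that the $\wh{\rhd}$-invariance of $E$ combined with $E(\BLTQ)\subseteq\LIQH$ degenerates via $\LIQ\cap\LIQH=\C1$. Once the spatial identity $\h{V}(E(T)\ten 1)\h{V}^* = E(T)\ten 1$ is in hand, the rest is immediate.
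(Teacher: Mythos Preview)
Your approach contains precisely the handedness error you anticipated. Applying Corollary~\ref{c:inclusion} with $\G$ and $\widehat{\G}$ swapped yields that $E$ is an $\LIQ$-bimodule map leaving $\LIQH$---not $\LIQ$---globally invariant, and the latter is trivial since $E$ already projects onto $\LIQH$. Your subsequent claim that $E|_{\LIQ}:\LIQ\to\LIQ\cap\LIQH$ is therefore unjustified, and the detour through invariant means does not reach $\G=\C1$. The spatial identity $\h{V}(E(T)\ten1)\h{V}^*=E(T)\ten1$ you invoke at the end would require $E(T)\in\LIQ$, which is exactly what you are trying to establish; the left $\rhd$-module property you obtain from Proposition~\ref{p:rightimpliesleft} for $\widehat{\G}$ only gives $E(\rho\rhd T)=\rho\rhd E(T)=\la\rho,1\ra E(T)$ (using $E(T)\in\LIQH$), which says nothing about $\h{V}$. (There is also a minor slip earlier: $E\in\CBTCrhr$ means $E(T\,\wh{\rhd}\,\rho)=E(T)\,\wh{\rhd}\,\rho$, not the left-module identity you wrote.) Your route \emph{can} be rescued: once you know $E$ is an $\LIQ$-bimodule map with $E(1)=1$, you get $E(x)=xE(1)=x$ for every $x\in\LIQ$, hence $\LIQ\subseteq\LIQH$ and so $\LIQ=\LIQ\cap\LIQH=\C1$.

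The paper's proof is much shorter and avoids Proposition~\ref{p:rightimpliesleft} and Corollary~\ref{c:inclusion} entirely. The key observation is that since $\h{V}\in\LIQ'\oten\LIQH$, one has $T\,\wh{\rhd}\,\rho=(\rho\ten\id)\h{V}(T\ten1)\h{V}^*\in\LIQH$ for \emph{every} $T\in\BLTQ$ and $\rho\in\TCQ$. As $E$ fixes $\LIQH$ pointwise and is a right $(\TCQ,\wh{\rhd})$-module map, $T\,\wh{\rhd}\,\rho=E(T\,\wh{\rhd}\,\rho)=E(T)\,\wh{\rhd}\,\rho$ for all $\rho$; by density of products \eqref{e:densityofproducts} this forces $T=E(T)$, so $\BLTQ=\LIQH$ and $\G=\C1$.
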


\begin{proof} For any $T\in\BLTQ$ and $\rho\in\TCQ$, we have $T \wh{\rhd} \rho\in\LIQH$, so if such a conditional expectation $E$ exists, then $T \wh{\rhd} \rho=E(T \wh{\rhd} \rho)=E(T) \wh{\rhd} \rho$. By density of products (\ref{e:densityofproducts}), it follows that $E(T)=T$. In particular $\BLTQ\subseteq\LIQH$, which entails that $\G=\widehat{\G}=\C1$. The converse is trivial.\end{proof}

\begin{prop} Let $\G$ be a locally compact quantum group. There exists a conditional expectation $E:\BLTQ\rightarrow\LIQH$ in $\CBTClhl$ if and only if $\G=\C1$.
\end{prop}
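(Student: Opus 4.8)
The plan is to mimic the proof of Proposition \ref{p:rightdual}, replacing the dual right product $\wh{\rhd}$ with the dual left product $\wh{\lhd}$ and using the appropriate analogue of the density of products \eqref{e:densityofproducts}. First I would recall that applying the general construction of section 4 to the co-multiplication $\h{\Gam}$ on $\LIQH$ — more precisely, to the left fundamental unitary $\h{W}$ — yields that $\h{\Gam}^l_*$ is a complete quotient map, hence $\TCQ=\la\TCQ\,\wh{\lhd}\,\TCQ\ra$, exactly as in \eqref{e:densityofproducts}. The other ingredient is the elementary observation that for $T\in\BLTQ$ and $\rho\in\TCQ$ we have $T\,\wh{\lhd}\,\rho=(\id\ten\rho)\h{W}^*(1\ten T)\h{W}\in\LIQH$, since $\h{W}\in\LIQ\oten\LIQH$ implies $\h{W}^*(1\ten T)\h{W}\in\BLTQ\oten\LIQH$ — this is the dual version of the fact that $\Gam^l$ maps into $\BLTQ\oten\LIQH$, and it guarantees that the range of the partial action $T\mapsto T\,\wh{\lhd}\,\rho$ already lies inside $\LIQH$.

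Granting these two facts, suppose $E:\BLTQ\rightarrow\LIQH$ is a conditional expectation lying in $\CBTClhl$, i.e.\ a left $(\TCQ,\wh{\lhd})$-module map which is idempotent onto $\LIQH$. For any $T\in\BLTQ$ and $\rho\in\TCQ$, the element $\rho\,\wh{\lhd}\,T$ lies in $\LIQH$, so since $E$ restricts to the identity on $\LIQH$ we get $\rho\,\wh{\lhd}\,T=E(\rho\,\wh{\lhd}\,T)=\rho\,\wh{\lhd}\,E(T)$, the last equality being the left-module property of $E$. Hence $\rho\,\wh{\lhd}\,(T-E(T))=0$ for all $\rho\in\TCQ$; testing against $\sigma\in\TCQ$ and using $\la\rho\,\wh{\lhd}\,S,\sigma\ra=\la S,\sigma\,\wh{\lhd}\,\rho\ra$ (the dual of \eqref{mod1}-type pairings) together with the density $\TCQ=\la\TCQ\,\wh{\lhd}\,\TCQ\ra$, we conclude $T-E(T)=0$, i.e.\ $E=\id_{\BLTQ}$. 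This forces $\BLTQ=\LIQH$, whence $\BLTQ$ is commutative-complement-trivial in the sense that $\LIQ\subseteq\LIQH'\cap\BLTQ$ gives $\LIQ\subseteq\C1$, and similarly $\LIQH\subseteq\C1$; so $\G=\h{\G}=\C1$. The converse is trivial, as in the previous proposition.

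The one subtlety — the step I expect to need a word of care — is the precise bracket/pairing bookkeeping for $\wh{\lhd}$: one must make sure that the partial action $T\mapsto T\,\wh{\lhd}\,\rho$ genuinely lands in $\LIQH$ (so that $E$ acts as the identity on it) and that the adjoint relation $\la\rho\,\wh{\lhd}\,T,\sigma\ra=\la T,\sigma\,\wh{\lhd}\,\rho\ra$ is used with the correct order of $\rho,\sigma$, since $\wh{\lhd}$ is not commutative. Once the $\wh{\lhd}$-analogue of \eqref{e:densityofproducts} and the containment $\h{W}\in\LIQ\oten\LIQH$ are in hand, the argument is otherwise a verbatim transcription of the proof of Proposition \ref{p:rightdual}; indeed one could alternatively deduce this statement from Proposition \ref{p:rightdual} by applying the unitary antipode $R$, which intertwines $\wh{\rhd}$ and $\wh{\lhd}$ via relations analogous to \eqref{R}, but the direct argument is cleaner.
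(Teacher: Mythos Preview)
Your direct argument is correct in substance, but the paper actually takes the route you mention only as an alternative: it simply observes that if $E\in\CBTClhl$ is a conditional expectation onto $\LIQH$, then $\h{R}\circ E\circ\h{R}$ is a conditional expectation onto $\LIQH$ lying in $\CBTCrhr$, and appeals to Proposition~\ref{p:rightdual}. Here one needs the antipode $\h{R}$ of the \emph{dual} $\h{\G}$ (not $R$), since it is $\h{R}$ that preserves $\LIQH$ and intertwines $\h{\Gam}^r$ and $\h{\Gam}^l$ via the $\h{\G}$-analogue of \eqref{R}. Your direct transcription of the Proposition~\ref{p:rightdual} argument is a perfectly good alternative and arguably more transparent, since it avoids checking the antipode bookkeeping; the paper's route is shorter on the page but hides that bookkeeping in the one-line appeal to $\h{R}$.

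Two small corrections to your preliminary paragraph, which you should fix even though they do not affect the main argument: first, $\h{W}\in\LIQH\oten\LIQ$ (not $\LIQ\oten\LIQH$), so that $\h{W}^*(1\ten T)\h{W}\in\LIQH\oten\BLTQ$; second, the formula $(\id\ten\rho)\h{W}^*(1\ten T)\h{W}$ computes the \emph{left} action $\rho\,\wh{\lhd}\,T$, not the right action $T\,\wh{\lhd}\,\rho$. Fortunately your second paragraph uses $\rho\,\wh{\lhd}\,T$ throughout, which is exactly what lands in $\LIQH$ and exactly what the left-module property of $E$ controls, so the core of the proof stands once the preliminary notation is repaired.
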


\begin{proof} Using the extended unitary antipode of $\widehat{\G}$, denoted by $\h{R}$, it follows that $\h{R}\circ E\circ \h{R}$ is a conditional expectation onto $\LIQH$ in $\CBTCrhr$, so the result follows from Proposition \ref{p:rightdual}. \end{proof}

\begin{prop}\label{p:leftdual} Let $\G$ be a locally compact quantum group. There exists a conditional expectation $E:\BLTQ\rightarrow\LIQH$ in $\CBTClhr$ if and only if $\G$ is amenable.\end{prop}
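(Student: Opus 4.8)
The plan is to prove the equivalence by relating the existence of a conditional expectation $E:\BLTQ\to\LIQH$ in $\CBTClhr$ to the existence of a left invariant mean on $\LIQH$, i.e.\ to amenability of $\widehat{\G}$ — except that amenability is a self-dual-looking statement here in the wrong variable, so one must be careful: the claim is phrased in terms of amenability of $\G$, and the module structure $\wh{\rhd}$ is the one coming from $\widehat{\G}$ acting on the \emph{left}. First I would settle the easy direction. Suppose $\G$ is amenable, so there is a left invariant mean $m\in\LIQ^*$; equivalently (using the extended antipode $R$ if needed) we may take a right invariant mean. The idea is to average: pick a net (or use $m$ directly) to build $E(T)=(m\ten\id)\h{V}(T\ten1)\h{V}^*$ or an analogous formula using the fundamental unitary, landing in the commutant of the appropriate algebra; one then checks that invariance of $m$ forces the averaged map to (i) be idempotent onto $\LIQH$, (ii) be completely positive and unital, hence a conditional expectation by Tomiyama, and (iii) intertwine the left $(\TCQ,\wh{\rhd})$-action because the averaging happens ``on the other leg'' from where $\wh{\rhd}$ acts. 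The key computational input is the invariance identity \eqref{leftinv} applied on $\LIQ$ together with the fact that $\h{V}\in\LIQ'\oten\LIQH$, so that conjugation by $\h{V}$ and the $\wh{\rhd}$-action occur on compatible/commuting tensor legs.

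For the converse, suppose $E:\BLTQ\to\LIQH$ is a conditional expectation in $\CBTClhr$. I would produce an invariant mean on $\LIQ$ by composing $E$ with a suitable state. Concretely, the restriction of $E$ to $\LIQ\subseteq\BLTQ$ lands in $\LIQH$, and since $\LIQ\cap\LIQH=\C1$ the map $E|_{\LIQ}$ is a state-valued object: $E(x)=\mu(x)1$ for some state $\mu\in\LIQ^*$. The point is then to show $\mu$ is a left (equivalently, right) invariant mean. This should follow from the module property: for $x\in\LIQ$ and $\rho\in\TCQ$ with $f=\rho|_{\LIQ}$, the $\wh{\rhd}$-module identity gives $E(\rho\wh{\rhd}x)=\rho\wh{\rhd}E(x)=\rho\wh{\rhd}(\mu(x)1)=\mu(x)\la\rho,1\ra 1$, while on the other hand one computes $\rho\wh{\rhd}x$ in terms of the $\widehat{\G}$-module structure, and crucially — via \eqref{mod1} read in the dual picture, or directly from the commutation relation \eqref{e:commutationrelation} — one relates $\rho\wh{\rhd}x$ back to an expression involving the $\star$-action of $\LOQ$ on $\LIQ$. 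Taking the scalar coefficient of $1$ and using that $E$ is the identity on $\C1$ should yield exactly the invariance equation $\la\mu,x\star f\ra=\la f,1\ra\la\mu,x\ra$ (or its right-handed analogue), giving a left invariant mean and hence amenability of $\G$.

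The main obstacle I anticipate is keeping the bookkeeping of which quantum group's action lives on which Hilbert-space leg, and in particular verifying that the averaging construction in the forward direction actually produces a map that is $\CBTClhr$ rather than merely $\LIQH$-bimodular or in one of the other seven module categories; this is where Proposition \ref{p:rightimpliesleft} and Corollary \ref{c:inclusion} may be needed, since an a priori weaker module condition on $E$ could be upgraded to the required one. A secondary technical point is the passage between ``mean'' and ``net of states'' when building $E$ from $m$: one typically needs a weak* limit of an averaging net in $\mc{CB}(\BLTQ)$, and one must argue the limit remains completely positive, unital, idempotent, and module-linear — the module-linearity survives weak* limits because it is tested against the fixed dense set of products $\TCQ\wh{\rhd}\TCQ$ from \eqref{e:densityofproducts}. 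Once these are in place the equivalence falls out, and this also dovetails with the paper's stated goal of recasting amenability of $\G$ as a homological property of $\LIQH$.
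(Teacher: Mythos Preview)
Your forward direction is essentially the paper's: the paper cites \cite[Theorem~4.2]{CN} (amenability of $\G$ yields a conditional expectation onto $\LIQH$ in $\CBTCrr$) and then applies Proposition~\ref{p:rightimpliesleft} to land in $\CBTClhr$. Your averaging construction is the content of that citation, and you correctly anticipate that Proposition~\ref{p:rightimpliesleft} supplies the module upgrade.

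The converse, however, has two genuine gaps. First, your assertion that $E(x)\in\C1$ for $x\in\LIQ$ invokes $\LIQ\cap\LIQH=\C1$, but a priori you only know $E(x)\in\LIQH$; you have not shown $E(x)\in\LIQ$. The paper fills this using the module property itself: since $\h{V}\in\LIQ'\oten\LIQH$, for $x\in\LIQ$ one has $\h{V}(x\ten1)\h{V}^*=x\ten1$, hence $\rho\,\wh\rhd\,x=\la\rho,1\ra x$, and therefore $\rho\,\wh\rhd\,E(x)=E(\rho\,\wh\rhd\,x)=\la\rho,1\ra E(x)$; the commutant argument from Corollary~\ref{c:inclusion} then forces $E(x)\in\LIQ$. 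Second, and more seriously, your plan to extract the invariance equation $\la\mu,x\star f\ra=\la f,1\ra\la\mu,x\ra$ from the $\wh\rhd$-module property cannot succeed: as just noted, $\rho\,\wh\rhd\,x=\la\rho,1\ra x$ for $x\in\LIQ$, so computing $E(\rho\,\wh\rhd\,x)$ two ways yields a tautology, and there is no mechanism by which the $\LOQ$-action $\star$ on $\LIQ$ emerges from $\wh\rhd$ --- equation~\eqref{mod1} read in the dual picture relates $\wh\rhd$ to the $\LOQH$-action on $\LIQH$, not to any action on $\LIQ$. The paper therefore does not attempt to verify invariance of $\mu$ directly; it stops after establishing $E(\LIQ)\subseteq\C1$ and invokes \cite[Theorem~3]{SV}, whose proof of invariance relies on the $\LIQH$-bimodule property of the conditional expectation (Tomiyama) together with $W\in\LIQ\oten\LIQH$, not on the $\wh\rhd$-module structure.
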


\begin{proof} If $\G$ is amenable, then by \cite[Theorem 4.2]{CN} there exists a conditional expectation onto $\LIQH$ in $\CBTCrr$, which, thanks to Proposition \ref{p:rightimpliesleft}, lies in $\CBTClhr$.

On the other hand, if there exists such a conditional expectation $E$, then for any $x\in\LIQ$ and $\rho\in\TCQ$ we have
$$\rho \wh\rhd E(x)=E(\rho \wh\rhd  x)=\la\rho,1\ra E(x).$$
As in the proof of Corollary \ref{c:inclusion}, this implies $E(x)\in\LIQ\cap\LIQH=\C1$. Hence,
$\G$ is amenable by \cite[Theorem 3]{SV}.\end{proof}

\begin{prop}\label{p:rhl} Let $\G$ be a locally compact quantum group. There exists a conditional expectation $E:\BLTQ\rightarrow\LIQH$ in $\CBTCrhl$ if and only if $\G$ is amenable.\end{prop}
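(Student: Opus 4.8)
The plan is to deduce Proposition~\ref{p:rhl} from Proposition~\ref{p:leftdual} by conjugation with the extended unitary antipode $\h{R}$ of $\widehat{\G}$, exactly as the preceding proposition concerning $\CBTClhl$ was reduced to Proposition~\ref{p:rightdual}. Recall that $\h{R}$ extends to a normal, isometric $*$-anti-automorphism of $\BLTQ$; being the unitary antipode of $\widehat{\G}$ it maps $\LIQH$ onto $\LIQH$, and its pre-adjoint $\h{R}_*$ is a bijective isometry of $\TCQ$ satisfying $(\h{R}_*)^2=\id$.

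The first step is to record how $\h{R}$-conjugation acts on the dual module structures. Because $\h{R}$ is an \emph{anti}-automorphism it interchanges left and right actions, and because the generalized antipode relations~(\ref{R}) for $\widehat{\G}$ interchange $\h{\Gam}^r$ and $\h{\Gam}^l$ (up to the flip), a short computation with these two identities shows that, for $T\in\BLTQ$ and $\om\in\TCQ$,
$$\h{R}(T\,\wh{\lhd}\,\om)=\h{R}_*(\om)\,\wh{\rhd}\,\h{R}(T)\qquad\text{and, equivalently,}\qquad\h{R}(\om\,\wh{\rhd}\,T)=\h{R}(T)\,\wh{\lhd}\,\h{R}_*(\om),$$
where on each side the $\wh{\lhd}$-term refers to the right $(\TCQ,\wh{\lhd})$-action on $\BLTQ$ and the $\wh{\rhd}$-term to the left $(\TCQ,\wh{\rhd})$-action. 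In other words, $\h{R}$ intertwines the right $(\TCQ,\wh{\lhd})$-module structure with the left $(\TCQ,\wh{\rhd})$-module structure, up to the automorphism $\h{R}_*$ of $\TCQ$.

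Granting this, suppose $E\in\CBTCrhl$. Then for $S\in\BLTQ$ and $\om\in\TCQ$ one computes, using the displayed identities and the module property $E(T\,\wh{\lhd}\,\om')=E(T)\,\wh{\lhd}\,\om'$ (with $T=\h{R}(S)$, $\om'=\h{R}_*(\om)$), that
$$(\h{R}\circ E\circ\h{R})(\om\,\wh{\rhd}\,S)=\om\,\wh{\rhd}\,(\h{R}\circ E\circ\h{R})(S),$$
so $\h{R}\circ E\circ\h{R}\in\CBTClhr$; the same argument run in reverse shows the converse. Moreover $\h{R}\circ E\circ\h{R}$ is again a conditional expectation onto $\LIQH$: it is a norm-one idempotent whose range is $\h{R}(\LIQH)=\LIQH$ and which fixes $\LIQH$ pointwise, hence a conditional expectation by Tomiyama's theorem. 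Since $\h{R}$ is an involution, $E\mapsto\h{R}\circ E\circ\h{R}$ is thus a bijection between conditional expectations onto $\LIQH$ in $\CBTCrhl$ and those in $\CBTClhr$; in particular one exists in $\CBTCrhl$ if and only if one exists in $\CBTClhr$, which by Proposition~\ref{p:leftdual} holds if and only if $\G$ is amenable.

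The argument is essentially routine once Proposition~\ref{p:leftdual} is available; the only point that requires genuine care is verifying that $\h{R}$-conjugation sends the right $(\TCQ,\wh{\lhd})$-structure to the left $(\TCQ,\wh{\rhd})$-structure --- and not, say, to the left $(\TCQ,\wh{\lhd})$-structure, which would lead to the incorrect conclusion $\G=\C1$. This is forced by combining the fact that $\h{R}$ is an anti-automorphism (hence swaps left and right) with \emph{both} identities in~(\ref{R}), so I do not anticipate any real obstacle.
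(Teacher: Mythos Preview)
Your proposal is correct and follows exactly the paper's approach: the paper's proof is the single sentence ``This follows from Proposition~\ref{p:leftdual} using the extended unitary antipode $\h{R}$,'' and you have simply unpacked the conjugation argument in detail, correctly verifying that $\h{R}$-conjugation carries right $(\TCQ,\wh{\lhd})$-module maps to left $(\TCQ,\wh{\rhd})$-module maps and preserves conditional expectations onto $\LIQH$.
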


\begin{proof} This follows from Proposition \ref{p:leftdual} using the extended unitary antipode $\h{R}$.\end{proof}

We record the normal version of Proposition \ref{p:rhl} for later use. The proof is left to the reader to establish.

\begin{prop}\label{p:rhlnormal} Let $\G$ be a locally compact quantum group. There exists a normal conditional expectation $E:\BLTQ\rightarrow\LIQH$ in $\CBTCrhl$ if and only if $\G$ is compact.\end{prop}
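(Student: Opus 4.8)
The plan is to first establish the normal counterpart of Proposition~\ref{p:leftdual} --- that a \emph{normal} conditional expectation $E\colon\BLTQ\to\LIQH$ lying in $\CBTClhr$ exists if and only if $\G$ is compact --- and then transport it to $\CBTCrhl$ across the extended unitary antipode $\h R$ of $\widehat\G$, exactly as Proposition~\ref{p:rhl} is obtained from Proposition~\ref{p:leftdual}. The only thing to check in the transport step is that $\h R$ is weak$^*$-continuous; this is clear, since it is conjugation by a fixed anti-unitary composed with the adjoint operation, and both preserve the weak$^*$ topology of $\BLTQ$. Consequently $\h R\circ E\circ\h R$ is again a normal conditional expectation, now onto $\h R(\LIQH)=\LIQH$ and lying in $\CBTCrhl$, precisely when $E$ is.

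For the forward implication I would argue directly. If $\G$ is compact, its Haar state $h$ is a \emph{normal} state on $\LIQ$, and since $V\in\LIQHP\oten\LIQ$ one has $\Gam^r(\BLTQ)\subseteq\BLTQ\oten\LIQ$, so $E:=(\id\ten h)\circ\Gam^r\colon\BLTQ\to\BLTQ$ is a well-defined normal, unital, completely positive map. Because $\LIQH$ is the fixed-point algebra $\{T\in\BLTQ:\Gam^r(T)=T\ten 1\}$ of the coaction $\Gam^r$ and $(\id\ten h)\circ\Gam=h(\cdot)1$, the map $E$ is an idempotent onto $\LIQH$, i.e.\ a normal conditional expectation onto $\LIQH$. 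That $E\in\CBTCrr$ is a short computation from the coassociativity identity $(\Gam^r\ten\id)\Gam^r=(\id\ten\Gam)\Gam^r$ together with the invariance of $h$: for $T\in\BLTQ$ and $\om\in\TCQ$ one finds $E(T\rhd\om)=h(T\rhd\om)1=\la\om,E(T)\ra 1=E(T)\rhd\om$. By Proposition~\ref{p:rightimpliesleft} the expectation $E$ then also lies in $\CBTClhr$, and applying $\h R$ yields the required normal conditional expectation in $\CBTCrhl$. (This $E$ is the normal companion of the conditional expectation constructed in \cite[Theorem~4.2]{CN}.)

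For the converse I would follow the proof of Proposition~\ref{p:leftdual}. Let $E\colon\BLTQ\to\LIQH$ be a normal conditional expectation in $\CBTClhr$. Since $\h V\in\LIQ'\oten\LIQH$ we have $\rho\,\wh{\rhd}\,x=\la\rho,1\ra x$ for all $x\in\LIQ$ and $\rho\in\TCQ$, so the $\wh{\rhd}$-module property of $E$ forces $\h V(E(x)\ten 1)\h V^*=E(x)\ten 1$; arguing as in the proof of Corollary~\ref{c:inclusion} this gives $E(x)\in\LIQ''=\LIQ$, whence $E(x)\in\LIQ\cap\LIQH=\C1$. Thus $E(x)=m(x)1$ for $x\in\LIQ$, where $m$ is now a \emph{normal} state on $\LIQ$; keeping track of normality in the argument behind \cite[Theorem~3]{SV}, one concludes that $m$ is invariant, so that $\LIQ$ carries a normal invariant mean. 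Being normal, invariant and finite, such a mean coincides up to a scalar with the Haar weight $\vphi$ by uniqueness of Haar weights, forcing $\vphi$ to be bounded, i.e.\ $\G$ to be compact. Alternatively one can bypass the invariance discussion: a normal conditional expectation onto $\LIQH$ can exist only when $\LIQH$ --- equivalently $\LIQHP=\LIQH'$ --- is atomic, and $L^\infty(\widehat\G)$ is atomic exactly when $\widehat\G$ is discrete, i.e.\ when $\G$ is compact.

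The main obstacle will be the converse, specifically the passage from ``$E|_{\LIQ}$ takes values in $\C1$'' to ``$E|_{\LIQ}$ is a normal \emph{invariant} mean'': the proof of Proposition~\ref{p:leftdual} outsources the non-normal version of this to \cite[Theorem~3]{SV}, and one must verify that the same reasoning, applied to a normal $E$, delivers a \emph{normal} invariant mean, hence the Haar state. The atomicity route sidesteps this, at the cost of invoking the standard characterization of discreteness of $\widehat\G$ by atomicity of $L^\infty(\widehat\G)$. The forward direction presents no real difficulty: $(\id\ten h)\circ\Gam^r$ is manifestly normal, and verifying that it is a right $(\TCQ,\rhd)$-module map is the same routine computation with coassociativity and the invariance of $h$ that recurs throughout this section.
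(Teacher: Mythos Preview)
Your proposal is correct and follows precisely the route the paper intends: the proof is explicitly left to the reader as the normal analogue of Proposition~\ref{p:rhl}, i.e., the normal version of Proposition~\ref{p:leftdual} transported across~$\h R$. The obstacle you flag in the converse is not genuine: since any conditional expectation onto $\LIQH$ is an $\LIQH$-bimodule map and $W\in\LIQ\oten\LIQH$, one has $(\id\ten E)\Gam(x)=W^*(1\ten E(x))W=m(x)\,1\ten 1$, whence $m$ is invariant and---being normal---must be the Haar state, so the atomicity detour is unnecessary.
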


\section{Main Result \& Applications}

We are now in position to prove the main result of the paper -- the equivalence of amenability of $\h{\G}$ and 1-injectivity of $\LIQ$ as an operator $\LOQ$-module. The theorem also reveals a duality between the left and right $(\TCQ,\rhd)$-module structures on $\BLTQ$: amenability of $\G$ is captured by left injectivity of $\BLTQ$ \cite[Theorem 5.5]{CN}, while amenability of $\h{\G}$ is captured by right injectivity of $\BLTQ$.
\newpage

\begin{thrm}\label{t:amenabilityrightinjectivity} Let $\G$ be a locally compact quantum group. The following conditions are equivalent:
\begin{enumerate}
\item $\h{\G}$ is amenable;
\item $\BLTQ$ is $1$-injective in $\mathbf{mod}-(\TCQ,\rhd)$;
\item $\LIQ$ is $1$-injective in $\mathbf{mod}-\LOQ$.
\end{enumerate}
\end{thrm}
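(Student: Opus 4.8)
The plan is to prove $(1)\Rightarrow(2)\Rightarrow(3)\Rightarrow(1)$, using the homological machinery from Section 2 to reduce to relative injectivity, and the conditional-expectation characterizations of amenability from Section 4 to close the loop.

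For $(1)\Rightarrow(2)$: assume $\h{\G}$ is amenable. By Proposition \ref{p:leftdual} (or Proposition \ref{p:rhl}) applied to $\h{\G}$ — equivalently, by \cite[Theorem 4.2]{CN} applied to $\h{\G}$ — there is a conditional expectation $E:\BLTQ\rightarrow\LIQ$ that is a module map for the relevant $\TCQ$-action; the point is that amenability of $\h{\G}$ yields a $(\TCQ,\rhd)$-module projection of $\BLTQ$ onto $\LIQ$ (here $\LIQ = L^\infty(\widehat{\widehat{\mathbb{G}}})$ plays the role that $\LIQH$ played for $\G$). This projection, combined with the fact that $\BLTQ=\mc{CB}(\LTQ,\LTQ)$ is relatively $1$-injective in $\mathbf{mod}-(\TCQ,\rhd)$ (it is of the form $\mc{CB}$ of a Hilbert space, hence an injective operator space, and the module structure is the canonical one making $\mc{CB}$-objects relatively injective), should give relative $1$-injectivity of $\BLTQ$ in $\mathbf{mod}-(\TCQ,\rhd)$. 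But $\BLTQ$ is also a $1$-injective operator space, so Proposition \ref{p:rel+inj} upgrades relative $1$-injectivity to genuine $1$-injectivity in $\mathbf{mod}-(\TCQ,\rhd)$.

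For $(2)\Rightarrow(3)$: the map $\pi:\TCQ\rightarrow\LOQ$ of \eqref{pi} is a complete quotient algebra homomorphism, and by \eqref{mod1} the $(\TCQ,\rhd)$-module structure on $\LIQ\subseteq\BLTQ$ descends through $\pi$ to the canonical $\LOQ$-module structure. One then checks that $\LIQ$ is a $(\TCQ,\rhd)$-submodule of $\BLTQ$ which is in fact a $1$-complemented one — indeed $(2)$ gives, via Corollary \ref{c:inclusion} together with the fact that $\LIQ$ sits inside $\BLTQ$, an actual $\TCQ$-module projection $\BLTQ\to\LIQ$ — so $\LIQ$ inherits $1$-injectivity. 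The passage from $(\TCQ,\rhd)$-modules to $\LOQ$-modules should be handled by the standard "restriction of scalars along a quotient homomorphism with dense range" argument: since $\pi$ has dense range and $\LIQ_\perp$ acts as zero on $\LIQ$, a $1$-injective $(\TCQ,\rhd)$-module that is annihilated by $\LIQ_\perp$ is $1$-injective as an $\LOQ$-module. (Self-inducedness of $\LOQ$, Proposition \ref{p:self-induced}, and faithfulness/essentiality of $\LIQ$ over $\LOQ$, are the ancillary facts that make the relative-to-absolute bookkeeping go through.)

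For $(3)\Rightarrow(1)$: if $\LIQ$ is $1$-injective in $\mathbf{mod}-\LOQ$, apply the defining extension property to the completely isometric module inclusion $\LIQ\hookrightarrow\BLTQ$ (with $\BLTQ$ carrying the $\rhd$-action, restricted to $\LOQ$ via $\pi$, which by \eqref{mod1} extends the $\LOQ$-action on $\LIQ$) and the identity map $\id_{\LIQ}$. This yields a completely contractive $\LOQ$-module projection $E:\BLTQ\rightarrow\LIQ$. The expected obstacle — and the technical heart of the argument — is to promote this $\LOQ$-linearity to genuine $(\TCQ,\wh{\rhd})$-linearity (or the relevant dual-product linearity) so that the conditional-expectation criterion applies: this is exactly where the commutation relation \eqref{e:commutationrelation} and the "automatic module property" of Proposition \ref{p:rightimpliesleft} enter, letting one conclude $E\in\CBTClhr$-type membership and hence, by the argument in Proposition \ref{p:leftdual} (the computation $\rho\,\wh{\rhd}\,E(x)=E(\rho\,\wh{\rhd}\,x)=\la\rho,1\ra E(x)$ forcing $E(x)\in\LIQ\cap\LIQH=\C1$, then invoking \cite[Theorem 3]{SV}), that $\h{\G}$ is amenable. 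One must also verify $E$ can be taken to be a genuine (unital, positive) conditional expectation rather than merely a contractive idempotent module map — standard, since a norm-one projection onto a von Neumann subalgebra containing the unit is automatically a conditional expectation by Tomiyama's theorem.
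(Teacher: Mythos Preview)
Your overall cycle matches the paper's, and $(1)\Rightarrow(2)$ is essentially the paper's argument (relative $1$-injectivity of $\BLTQ$ from \cite[Proposition~5.8]{CN} plus Proposition~\ref{p:rel+inj}), but the other two steps have genuine gaps.

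In $(2)\Rightarrow(3)$ you claim that $(2)$ together with Corollary~\ref{c:inclusion} yields a $(\TCQ,\rhd)$-module projection $\BLTQ\to\LIQ$, whence $\LIQ$ inherits $1$-injectivity. But $1$-injectivity of $\BLTQ$ is an extension property for maps \emph{into} $\BLTQ$; it produces no retraction onto a submodule, and Corollary~\ref{c:inclusion} only says that module maps preserve $\LIQ$ once you already have one. The missing construction is the paper's: identifying $\mc{CB}(\TCQ,\BLTQ)\cong\BLTQ\oten\BLTQ$ so that the canonical $\Delta$ is $\Gam^r$, relative $1$-injectivity gives a right $(\TCQ,\rhd)$-module left inverse $\Phi:\BLTQ\oten\BLTQ\to\BLTQ$ to $\Gam^r$. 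The arguments of Proposition~\ref{p:rightimpliesleft} and Corollary~\ref{c:inclusion} then force $\Phi(\LIQ\oten\BLTQ)\subseteq\LIQ$; since $\Gam^l$ maps $\BLTQ$ into $\LIQ\oten\BLTQ$ and $\Gam^l|_{\LIQ}=\Gam^r|_{\LIQ}=\Gam$, the composite $\Phi\circ\Gam^l$ is a norm-one projection of $\BLTQ$ onto $\LIQ$ (so $\LIQ$ is $1$-injective in $\mathbf{mod}-\C$), while $\Phi|_{\LIQ\oten\LIQ}$ is an $\LOQ$-module left inverse to $\Gam$ (relative $1$-injectivity in $\mathbf{mod}-\LOQ$). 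Proposition~\ref{p:rel+inj} then gives $(3)$.

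In $(3)\Rightarrow(1)$ your proposed $\LOQ$-module structure on $\BLTQ$ via ``the $\rhd$-action restricted to $\LOQ$ via $\pi$'' is not well defined: since $V\in\LIQHP\oten\LIQ$, the first tensor leg of $\Gam^r(T)=V(T\ten1)V^*$ does not lie in $\LIQ$ for general $T$, so $T\rhd\om$ depends on all of $\om\in\TCQ$, not merely on $\pi(\om)$. In particular you never obtain a $(\TCQ,\rhd)$-module map, and Proposition~\ref{p:rightimpliesleft}, whose hypothesis is precisely $(\TCQ,\rhd)$-linearity, cannot be invoked to promote $\LOQ$-linearity to anything. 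The paper instead equips $\BLTQ$ with the $\lhd$-action $T\lhd f=(f\ten\id)W^*(1\ten T)W$; because $W\in\LIQ\oten\LIQH$ this \emph{does} factor through $\pi$ and defines a bona fide right $\LOQ$-action on $\BLTQ$ extending the canonical one on $\LIQ$. The resulting conditional expectation $E:\BLTQ\to\LIQ$ is then automatically in $\CBTCrl$, and Proposition~\ref{p:rhl} applied to $\h{\G}$ yields amenability of $\h{\G}$ directly---no promotion step is needed.
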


\begin{proof} $(1)\Rightarrow(2)$: By \cite[Proposition 5.8]{CN} amenability of $\h{\G}$ implies that $\BLTQ$ is relatively 1-injective in $\mathbf{mod}-(\TCQ,\rhd)$. Since $\BLTQ$ is a 1-injective operator space, the implication follows from Proposition \ref{p:rel+inj}.

$(2)\Rightarrow(3)$: If $\BLTQ$ is $1$-injective in $\mathbf{mod}-(\TCQ,\rhd)$, there exists a completely contractive morphism $\Phi:\BLTQ\oten\BLTQ\rightarrow\BLTQ$ which is a left inverse to $\Gam^r$, where the pertinent right $(\TCQ,\rhd)$-module structure on $\BLTQ\oten\BLTQ$ is defined by
$$X\RHD\rho=(\rho\ten\id\ten\id)(\Gam^r\ten\id)(X), \ \ \ X\in\BLTQ\oten\BLTQ, \ \rho\in\TCQ.$$
By the proof of Proposition \ref{p:rightimpliesleft} it follows that $\Phi(\rho \wh{\RHD}  X)=\rho \wh{\rhd} \Phi(X)$, where the left module action $\wh{\RHD}$ is given by
$$\rho \wh{\RHD} X=(\id\ten\rho\ten\id)(\h{\Gam}^r\ten\id)(X), \ \ \ X\in\BLTQ\oten\BLTQ, \ \rho\in\TCQ.$$
Furthermore, the proof of Corollary \ref{c:inclusion} entails the invariance
$$\Phi(\LIQ\oten\BLTQ)\subseteq\LIQ.$$
Since $\Gam^l:\BLTQ\rightarrow\LIQ\oten\BLTQ$, the composition $\Phi\circ\Gam^l$ therefore maps into $\LIQ$. Moreover, if $x\in\LIQ$ then $\Phi\circ\Gam^l(x)=\Phi\circ\Gam^r(x)=x$, so that $\Phi\circ\Gam^l$ is a projection of norm one from $\BLTQ$ onto $\LIQ$. Thus, $\LIQ$ is a $1$-injective operator space.

Next, consider the map $\Psi=\Phi|_{\LIQ\oten\LIQ}:\LIQ\oten\LIQ\rightarrow\LIQ$. Since the right $(\TCQ,\rhd)$-module action on $\BLTQ$ restricts to the canonical right $\LOQ$-module action on $\LIQ$, it follows that $\Psi$ is a completely contractive right $\LOQ$-module map such that $\Psi\circ\Gam=\id_{\LIQ}$. Since $\LIQ$ is faithful in $\mathbf{mod}-\LOQ$, this entails the relative $1$-injectivity of $\LIQ$ in $\mathbf{mod}-\LOQ$, and therefore the $1$-injectivity of $\LIQ$ in $\mathbf{mod}-\LOQ$ by Proposition \ref{p:rel+inj}.

$(3)\Rightarrow(1)$: Viewing $\BLTQ$ as a right operator $\LOQ$-module via
\begin{equation*}\label{rightaction}T \lhd f=(f\ten\id)\Gam^l(T)=(f\ten\id)W^*(1\ten T)W, \ \ \ f\in\LOQ, \ T\in\BLTQ,\end{equation*}
1-injectivity of $\LIQ$ in $\mathbf{mod}-\LOQ$ gives an extension of $\id_{\LIQ}$ to a completely contractive morphism $E:\BLTQ\rightarrow\LIQ$. Proposition \ref{p:rhl} then entails the amenability of $\widehat{\G}$.\end{proof}

Analogously, there is a left module version of Theorem \ref{t:amenabilityrightinjectivity} involving the left product $\lhd$, the proof of which follows similarly.

\begin{thrm}\label{t:amenabilityleftinjectivity} Let $\G$ be a locally compact quantum group. The following conditions are equivalent:
\begin{enumerate}
\item $\h{\G}$ is amenable;
\item $\BLTQ$ is $1$-injective in $(\TCQ,\lhd)-\mathbf{mod}$;
\item $\LIQ$ is $1$-injective in $\LOQ-\mathbf{mod}$.
\end{enumerate}
\end{thrm}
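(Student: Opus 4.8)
The plan is to deduce Theorem~\ref{t:amenabilityleftinjectivity} from Theorem~\ref{t:amenabilityrightinjectivity} via the unitary antipode, in the same spirit as the left--right dualities of Section~4. Recall that the extended antipode $R:\BLTQ\rightarrow\BLTQ$, $R(T)=\h{J}T^*\h{J}$, is a completely isometric linear involution which is a $*$-anti-automorphism mapping $\LIQ$ onto itself, and that by the generalized antipode relations $(R\ten R)\circ\Gam^r=\Sigma\circ\Gam^l\circ R$ and $(R\ten R)\circ\Gam^l=\Sigma\circ\Gam^r\circ R$ its pre-adjoint satisfies $R_*^2=\id$, $R_*(\om\rhd\tau)=R_*(\tau)\lhd R_*(\om)$ and $R_*(\om\lhd\tau)=R_*(\tau)\rhd R_*(\om)$ on $\TCQ$, and descends to a completely isometric involutive anti-automorphism of the convolution algebra $\LOQ$. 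I would begin by recording the elementary fact that, precisely because $R_*$ is a completely isometric involution with these anti-multiplicativity properties, the recipe $(X,m)\mapsto X$ with left action $\om\cdot x:=m(x\ten R_*(\om))$ defines an isomorphism of categories $\mathbf{mod}-(\TCQ,\rhd)\cong(\TCQ,\lhd)-\mathbf{mod}$, and likewise $\mathbf{mod}-\LOQ\cong\LOQ-\mathbf{mod}$; being the identity on underlying operator spaces and on morphisms, each such isomorphism preserves complete isometries, complete quotient maps, completely bounded norms, and hence $C$-injectivity.

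Under this category isomorphism the right $(\TCQ,\rhd)$-module $\BLTQ$ corresponds to the left $(\TCQ,\lhd)$-module on $\BLTQ$ with action $\om\cdot T=T\rhd R_*(\om)$, and the right $\LOQ$-module $\LIQ$ to the left $\LOQ$-module on $\LIQ$ with action $f\cdot x=x\star R_*(f)$. The next step is to identify these with the \emph{natural} left module structures appearing in Theorem~\ref{t:amenabilityleftinjectivity}, the identification being implemented by $R$ itself: using $R^2=\id$ and the antipode relations above, a short slice computation gives $R(\om\lhd T)=R(T)\rhd R_*(\om)$ for all $\om\in\TCQ$, $T\in\BLTQ$, and $R(f\star x)=R(x)\star R_*(f)$ for all $f\in\LOQ$, $x\in\LIQ$. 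Since $R$ is a completely isometric bijection, it is therefore a completely isometric module isomorphism from $\BLTQ$ with its natural left $(\TCQ,\lhd)$-structure onto the $R_*$-twist of the right module $\BLTQ$, and from $\LIQ$ with its natural left $\LOQ$-structure onto the $R_*$-twist of the right module $\LIQ$.

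Combining the two steps, the natural left $(\TCQ,\lhd)$-module $\BLTQ$ is completely isometrically isomorphic to the image of the right $(\TCQ,\rhd)$-module $\BLTQ$ under a category isomorphism preserving $1$-injectivity, and similarly for $\LIQ$ in place of $\BLTQ$; as the quantum group $\h{\G}$ plays no role in these transfers, condition~(2) (resp.~(3)) of Theorem~\ref{t:amenabilityleftinjectivity} is equivalent to condition~(2) (resp.~(3)) of Theorem~\ref{t:amenabilityrightinjectivity}, hence to condition~(1), amenability of $\h{\G}$, which is verbatim the same in both theorems. Thus all three conditions of Theorem~\ref{t:amenabilityleftinjectivity} are equivalent.

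The step I expect to be the main obstacle is the intertwining identity $R(\om\lhd T)=R(T)\rhd R_*(\om)$ and its $\LIQ$-analogue: one must apply the antipode relations with the correct flip $\Sigma$, keep careful track of which leg of the fundamental unitaries $W\in\LIQ\oten\LIQH$ and $V\in\LIQHP\oten\LIQ$ carries the module variable, and invoke $R^2=\id$ (equivalently $\h{J}^2=1$) at just the right moment, lest one land on an intertwiner differing from $R$ by an extra flip or by the scaling group. If one prefers to avoid the categorical language, an equivalent route is to transpose the chain $(1)\Rightarrow(2)\Rightarrow(3)\Rightarrow(1)$ of the proof of Theorem~\ref{t:amenabilityrightinjectivity} verbatim, interchanging $\rhd\leftrightarrow\lhd$, $V\leftrightarrow W$, $\Gam^r\leftrightarrow\Gam^l$ and $\wh{\rhd}\leftrightarrow\wh{\lhd}$ throughout, and using the left-handed versions of \cite[Proposition~5.8]{CN}, Proposition~\ref{p:rightimpliesleft}, Corollary~\ref{c:inclusion} and Proposition~\ref{p:leftdual} -- each itself obtained from its right-handed counterpart by the same antipode argument.
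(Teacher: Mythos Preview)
Your proposal is correct. The paper gives no detailed proof of Theorem~\ref{t:amenabilityleftinjectivity} at all, merely asserting that ``the proof of which follows similarly'' to Theorem~\ref{t:amenabilityrightinjectivity}; your second route---transposing the chain $(1)\Rightarrow(2)\Rightarrow(3)\Rightarrow(1)$ with $\rhd\leftrightarrow\lhd$, $V\leftrightarrow W$, $\Gam^r\leftrightarrow\Gam^l$ throughout---is precisely what the paper intends.

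Your primary route, deducing the left version from the right via the antipode-induced category isomorphism, is a genuinely cleaner alternative: it avoids rechecking each ingredient (the left analogues of \cite[Proposition~5.8]{CN}, Proposition~\ref{p:rightimpliesleft}, Corollary~\ref{c:inclusion}, Proposition~\ref{p:rhl}) and instead reduces everything to the single intertwining identity $R(\om\lhd T)=R(T)\rhd R_*(\om)$. Your concern about this identity is unwarranted---it follows immediately by duality from the predual relation $R_*(\rho\rhd\tau)=R_*(\tau)\lhd R_*(\rho)$ already recorded in the paper: for any $\tau\in\TCQ$,
\[
\la R(\om\lhd T),\tau\ra=\la T,R_*(\tau)\lhd\om\ra=\la T,R_*(R_*(\om)\rhd\tau)\ra=\la R(T),R_*(\om)\rhd\tau\ra=\la R(T)\rhd R_*(\om),\tau\ra,
\]
using $R_*^2=\id$. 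No leg-tracking of $W$ or $V$ is needed, and the scaling group does not enter. What the antipode route buys is economy and a transparent explanation of \emph{why} the left and right theorems are equivalent; what the direct transposition buys is independence from Theorem~\ref{t:amenabilityrightinjectivity}, should one want a self-contained proof.
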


In the co-commutative setting, we obtain a new characterization of amenable locally compact groups.

\begin{cor}\label{c:amenabilityinjectivitys} Let $G$ be a locally compact group. The following conditions are equivalent:
\begin{enumerate}
\item $G$ is amenable;
\item $\LG$ is $1$-injective in $\mathbf{mod}-A(G)$;
\item $\LG$ is $1$-injective in $A(G)-\mathbf{mod}$.
\end{enumerate}
\end{cor}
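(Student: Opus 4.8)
The plan is to derive Corollary \ref{c:amenabilityinjectivitys} as a direct specialization of Theorems \ref{t:amenabilityrightinjectivity} and \ref{t:amenabilityleftinjectivity} to the co-commutative quantum group $\G=\G_s$ associated with $G$. First I would recall from Section 3 that for this choice we have $\LIQ=\LG$ and $\LOQ=A(G)$, and that the dual quantum group is the commutative quantum group $\widehat{\G}=\G_a$, whose underlying locally compact group is $G$ itself. The key observation is that $\widehat{\G}=\G_a$ is amenable precisely when the underlying group $G$ is amenable (as noted in the excerpt just after the definition of $\G_a$), so condition (1) of each theorem, ``$\widehat{\G}$ is amenable,'' translates verbatim into ``$G$ is amenable.''

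With that dictionary in hand, condition (3) of Theorem \ref{t:amenabilityrightinjectivity}, namely ``$\LIQ$ is $1$-injective in $\mathbf{mod}-\LOQ$,'' becomes ``$\LG$ is $1$-injective in $\mathbf{mod}-A(G)$,'' which is exactly condition (2) of the corollary; and condition (3) of Theorem \ref{t:amenabilityleftinjectivity} becomes ``$\LG$ is $1$-injective in $A(G)-\mathbf{mod}$,'' which is condition (3) of the corollary. Hence the equivalences $(1)\Leftrightarrow(2)$ and $(1)\Leftrightarrow(3)$ of the corollary follow immediately from the respective theorems applied to $\G=\G_s$.

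I do not anticipate any genuine obstacle here; the only point requiring a word of care is making sure the identifications of the operator module structures are the canonical ones. Specifically, I would note that under the identifications $\LIQ=\LG$, $\LOQ=A(G)$, the canonical $\LOQ$-bimodule structure on $\LIQ$ given by $f\star x=(\id\ten f)\Gam(x)$ and $x\star f=(f\ten\id)\Gam(x)$ is precisely the standard (left and right) $A(G)$-module structure on $\LG$ coming from the $A(G)$-module structure on its dual $\LG^*=VN(G)_*$; this is a routine unwinding of the co-commutative co-multiplication $\Gam_s(\lm(t))=\lm(t)\ten\lm(t)$. Once this is recorded, the corollary is simply the co-commutative instance of the two main theorems, so the proof is a single short paragraph invoking Theorems \ref{t:amenabilityrightinjectivity} and \ref{t:amenabilityleftinjectivity} together with the fact that a co-commutative $\G_s$ has amenable dual $\G_a$ iff $G$ is amenable.
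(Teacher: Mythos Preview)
Your proposal is correct and matches the paper's approach exactly: the corollary is stated immediately after Theorems \ref{t:amenabilityrightinjectivity} and \ref{t:amenabilityleftinjectivity} as their co-commutative specialization, with no separate proof given. The identifications $\LIQ=\LG$, $\LOQ=A(G)$, and the equivalence of amenability of $\widehat{\G_s}=\G_a$ with amenability of $G$ are precisely the ingredients the paper relies on implicitly.
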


\begin{remark} Corollary \ref{c:amenabilityinjectivitys} highlights the significance of the non-relative homology of $\LG$ as an operator $A(G)$-module. In fact, \e{relative} 1-injectivity of $\LG$ in $\mathbf{mod}-A(G)$ is equivalent to inner amenability of $G$ \cite[Theorem 3.4]{CT}. Related results at the level of quantum groups will appear in subsequent work.\end{remark}

In \cite{Ha2} Haagerup provided an elegant characterization of injective von Neumann algebras via decomposability of completely bounded maps. More specifically, a von Neumann algebra $M$ is injective if and only if $\mc{CB}(M)=\textnormal{span} \ \mc{CP}(M)$, where $\mc{CP}(M)$ is the set of completely positive maps $\Phi:M\rightarrow M$. The next result provides a similar decomposition for $\LOQ$-module maps on $\LIQ$ when $\LIQ$ is $1$-injective in $\mathbf{mod}-\LOQ$.

\begin{prop}\label{p:decomposition} Let $\G$ be a locally compact quantum group. If $\LIQ$ is $1$-injective in $\mathbf{mod}-\LOQ$ (equivalently, $\wh{\G}$ is amenable) then
$$\mc{CB}_{\LOQ}(\LIQ)=\textnormal{span} \ \mc{CP}_{\LOQ}(\LIQ).$$
\end{prop}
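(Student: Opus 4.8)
Suppose $\LIQ$ is $1$-injective in $\mathbf{mod}-\LOQ$, so by Theorem~\ref{t:amenabilityrightinjectivity} and its proof we have a completely contractive right $\LOQ$-module projection $E:\BLTQ\rightarrow\LIQ$ (equivalently $E=\Phi\circ\Gam^l$ with $\Phi$ as in that proof), and moreover $E$ lies in $\CBTCrhl$ by Proposition~\ref{p:rhl}; in particular $E$ is an $\LIQH$-bimodule map onto $\LIQ$. The strategy is to reduce the decomposition problem for $\LOQ$-module maps on $\LIQ$ to Haagerup's decomposition theorem for $\mc{CB}(\BLTQ)$ by transporting maps back and forth across $E$ and the inclusion $\LIQ\hookrightarrow\BLTQ$, while checking at each stage that the relevant module property is preserved.

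\emph{Step 1: Reduce to $\BLTQ$.} Given $\Phi\in\mc{CB}_{\LOQ}(\LIQ)$, form $\widetilde{\Phi}:=\iota\circ\Phi\circ E\colon\BLTQ\rightarrow\BLTQ$, where $\iota$ is the inclusion. This is a completely bounded map on $\BLTQ$. By Haagerup's theorem \cite{Ha2}, since $\BLTQ$ is injective, $\widetilde{\Phi}=\sum_{k}\lambda_k\Psi_k$ with each $\Psi_k\in\mc{CP}(\BLTQ)$ and $\lambda_k\in\C$. Now compress back: set $\Phi_k:=E\circ\Psi_k\circ\iota\colon\LIQ\rightarrow\LIQ$. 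Each $\Phi_k$ is completely positive (compositions of c.p.\ maps, using that $E$ is a conditional expectation hence c.p.), and $\sum_k\lambda_k\Phi_k=E\circ\widetilde{\Phi}\circ\iota=E\circ\Phi\circ E\circ\iota=\Phi$, since $E\circ\iota=\id_{\LIQ}$ and $E|_{\LIQ}=\id$.

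\emph{Step 2: Fix the module property.} The maps $\Phi_k$ from Step~1 are c.p., but a priori they need not be $\LOQ$-module maps — this is the crux of the argument and the main obstacle. To repair this, the idea is to \emph{average} using the module structure. Observe that since $E\in\CBTCrhl$, for $f\in\LOQ$ (lifted to $\TCQ$) the action $x\mapsto x\star f=x\rhd\omega$ commutes with $E$ on $\LIQ$, and the right $(\TCQ,\rhd)$-module action restricts to the right $\LOQ$-action on $\LIQ$. The natural move is to replace each $\Phi_k$ by its ``$\LOQ$-invariant part'': because $\LOQ$ has a bounded approximate identity only when $\G$ is co-amenable, one cannot simply integrate, so instead one should exploit that amenability of $\wh\G$ gives an invariant mean $m\in\LIQH^*$ and use it to average the family $\{f\cdot\Phi_k\cdot f'\}$ over a suitable index. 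Concretely, one expects to define $\Phi_k^{\mathrm{mod}}(x)=(m\otimes\id)$ applied to an appropriate $\LIQH\oten\LIQ$-valued function built from $\Phi_k$, yielding a c.p.\ map that is now an $\LOQ$-module map and still sums (against the $\lambda_k$) to $\Phi$ — the last point because $\Phi$ was already a module map, so averaging it returns $\Phi$ unchanged.

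\emph{Step 3: Assemble.} Once Step~2 produces finitely many (or norm-convergent, after a further limiting argument over the net defining $m$) completely positive $\LOQ$-module maps $\Phi_k^{\mathrm{mod}}$ with $\sum_k\lambda_k\Phi_k^{\mathrm{mod}}=\Phi$, we conclude $\mc{CB}_{\LOQ}(\LIQ)\subseteq\textnormal{span}\,\mc{CP}_{\LOQ}(\LIQ)$; the reverse inclusion is trivial since $\mc{CP}_{\LOQ}(\LIQ)\subseteq\mc{CB}_{\LOQ}(\LIQ)$. I expect the genuinely delicate point to be Step~2: making the averaging procedure well-defined and module-equivariant in the absence of a bounded approximate identity in $\LOQ$, and ensuring the averaged maps remain completely positive while the decomposition survives the limit. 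A cleaner alternative worth trying is to carry out the decomposition one level up — decompose $\widetilde\Phi$ not in $\mc{CB}(\BLTQ)$ but in $\CBTCrhl(\BLTQ)$ directly, using a module version of Haagerup's theorem together with Corollary~\ref{c:inclusion}, and then compress; this would make the module property automatic and relegate the only real work to establishing the equivariant Haagerup decomposition on $\BLTQ$.
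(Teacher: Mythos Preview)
Your Step~1 is fine, but Step~2 is not a proof --- it is a hope. You correctly identify that the compressed maps $\Phi_k=E\circ\Psi_k\circ\iota$ need not be $\LOQ$-module maps, and you propose to repair this by ``averaging'' against an invariant mean on $\LIQH$. But you never define the averaging operation, and it is not at all clear how to build a linear map $\mc{A}$ on $\mc{CB}(\LIQ)$ that simultaneously (i) sends c.p.\ maps to c.p.\ maps, (ii) lands in $\mc{CB}_{\LOQ}(\LIQ)$, and (iii) fixes $\mc{CB}_{\LOQ}(\LIQ)$ pointwise. The invariant mean lives on $\LIQH$, not on $\LIQ$, and the construction you gesture at (``$(m\otimes\id)$ applied to an appropriate $\LIQH\oten\LIQ$-valued function'') is undefined. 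Without a concrete $\mc{A}$, Step~2 is a gap, and the alternative you float at the end --- an equivariant Haagerup decomposition on $\BLTQ$ --- is precisely the nontrivial statement you are trying to prove, just one level up.

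The paper's argument avoids averaging altogether by running the Paulsen off-diagonalisation trick \emph{inside the module category}. First one checks that $M_2(\LIQ)$ (with the amplified action) is again $1$-injective in $\mathbf{mod}-\LOQ$: the canonical $\Delta_2:M_2(\LIQ)\to\mc{CB}(\LOQ,M_2(\LIQ))=M_2(\mc{CB}(\LOQ,\LIQ))$ is the second amplification of $\Delta$, so amplifying a module left inverse of $\Delta$ gives one for $\Delta_2$, and then Proposition~\ref{p:rel+inj} applies. Now, for a completely contractive $\Phi\in\mc{CB}_{\LOQ}(\LIQ)$, the Paulsen system $\mc{S}\subseteq M_2(\LIQ)$ is an $\LOQ$-submodule, and the associated unital c.p.\ map $\Phi_{\mc{S}}:\mc{S}\to M_2(\LIQ)$ is an $\LOQ$-module map. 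By $1$-injectivity of $M_2(\LIQ)$ in $\mathbf{mod}-\LOQ$, $\Phi_{\mc{S}}$ extends to a unital completely contractive \emph{module} map $\widetilde{\Phi}:M_2(\LIQ)\to M_2(\LIQ)$, which is automatically c.p.; its diagonal corners $\Psi_1,\Psi_2$ lie in $\mc{CP}_{\LOQ}(\LIQ)$, and polarising the resulting c.p.\ map $x\mapsto\widetilde{\Phi}\bigl(\begin{smallmatrix}x&x\\x&x\end{smallmatrix}\bigr)$ writes $\Phi$ as a combination of four maps in $\mc{CP}_{\LOQ}(\LIQ)$. The point is that module $1$-injectivity gives a \emph{module} extension, so the module property is built into the decomposition from the start rather than imposed afterwards.
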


\begin{proof} Viewing $M_n(\LIQ)$ as an operator $\LOQ$-module under the amplified action:
$$[x_{ij}]\star f=[x_{ij}\star f], \ \ \ [x_{ij}]\in M_n(\LIQ), \ f\in\LOQ,$$
we claim that $M_n(\LIQ)$ is $1$-injective in $\mathbf{mod}-\LOQ$ for any $n\in\N$. Indeed, the canonical morphism $$\Delta_n:M_n(\LIQ)\rightarrow\mc{CB}(\LOQ,M_n(\LIQ))=M_n(\mc{CB}(\LOQ,\LIQ))$$
is nothing but the $n^{th}$ amplification of $\Delta:\LIQ\rightarrow\mc{CB}(\LOQ,\LIQ)$, so the $n^{th}$ amplification of a completely contractive module left inverse of $\Delta$ (which exists by $1$-injectivity of $\LIQ$) provides a completely contractive module left inverse to $\Delta_n$. Since $M_n(\LIQ)$ is $1$-injective in $\mathbf{mod}-\C$ \cite[Proposition XV.3.2]{T4}, the claim follows from Proposition \ref{p:rel+inj}.

Now, let $\Phi\in\mc{CB}_{\LOQ}(\LIQ)$ be a complete contraction, and consider the Paulsen system $\mc{S}\subseteq M_2(\LIQ)$ defined by
$$\mc{S}=\bigg\{\begin{pmatrix} \alpha1 & x\\ y & \beta1\end{pmatrix}\bigg| \ x,y\in\LIQ, \ \alpha,\beta\in\C\bigg\}.$$
Then $\mc{S}$ is an $\LOQ$-submodule of $M_2(\LIQ)$ and $\Phi$ gives rise to a unital completely positive $\LOQ$-module map $\Phi_{\mc{S}}:\mc{S}\rightarrow M_2(\LIQ)$ via off-diagonalization \cite{Paulsen}:
$$\Phi_{\mc{S}}\begin{pmatrix}\begin{pmatrix} \alpha1 & x\\ y & \beta1\end{pmatrix}\end{pmatrix}=\begin{pmatrix} \alpha1 & \Phi(x)\\ \Phi^*(y) & \beta1\end{pmatrix}, \ \ \ \begin{pmatrix} \alpha1 & x\\ y & \beta1\end{pmatrix}\in\mc{S},$$
where $\Phi^*(y)=\Phi(y^*)^*$, $y\in\LIQ$. By $1$-injectivity of $M_2(\LIQ)$ in $\mathbf{mod}-\LOQ$, the map $\Phi_{\mc{S}}$ extends to a completely contractive $\LOQ$-module map $\widetilde{\Phi}:M_2(\LIQ)\rightarrow M_2(\LIQ)$ such that
$$\widetilde{\Phi}\begin{pmatrix}\begin{pmatrix} 1 & 0\\0 & 1\end{pmatrix}\end{pmatrix}=\begin{pmatrix} 1 & 0\\0 & 1\end{pmatrix}.$$
Hence, $\widetilde{\Phi}$ is completely positive and is of the form
$$\widetilde{\Phi}\begin{pmatrix}\begin{pmatrix} x_{11} & x_{12}\\x_{21} & x_{22}\end{pmatrix}\end{pmatrix}=\begin{pmatrix} \Psi_1(x_{11}) & \Phi(x_{12})\\ \Phi^*(x_{21}) & \Psi_2(x_{22})\end{pmatrix}, \ \ \ [x_{ij}]\in M_2(\LIQ),$$
where $\Psi_i\in\mc{CP}_{\LOQ}(\LIQ)$ is associated to $P_{ii}\circ\widetilde{\Phi}\circ P_{ii}$, and
$$P_{ii}\in\mc{CP}_{\LOQ}(M_2(\LIQ))$$
is the diagonal projection onto the $(i,i)^{th}$ entry for $i=1,2$. By \cite[Proposition 5.4.2]{ER}, it follows that the map
$$\widetilde{\Phi}|_{\LIQ}:\LIQ\ni x\mapsto\begin{pmatrix} \Psi_1(x) & \Phi(x)\\ \Phi^*(x) & \Psi_2(x)\end{pmatrix}\in M_2(\LIQ)$$
is a completely positive $\LOQ$-module map. Thus, via polarization (as in \cite[Proposition 5.4.1]{ER}), it follows that $\Phi\in\textnormal{span} \ \mc{CP}_{\LOQ}(\LIQ)$.\end{proof}

\begin{remark} As the proof of Proposition \ref{p:decomposition} shows, when $\LIQ$ is $1$-injective in $\mathbf{mod}-\LOQ$ we can decompose any $\Phi\in\mc{CB}_{\LOQ}(\LIQ)$ into a linear combination of 4 completely positive $\LOQ$-module maps.\end{remark}


For a locally compact group $G$, it is well-known that $B(G)=\Mcb$ whenever $G$ is amenable \cite[Corollary 1.8]{dH}. Using Proposition \ref{p:decomposition} together with \cite[Theorem 5.2]{D} we can now generalize this implication to arbitrary locally compact quantum groups. We note that the same result was obtained under the a priori stronger assumption that $\G$ is co-amenable \cite[Theorem 4.2]{HNR2}.

\begin{cor}\label{B(G)=McbA(G)} Let $\G$ be a locally compact quantum group. If $\h{\G}$ is amenable then
$$C_u(\G)^*\cong\McbQr.$$\end{cor}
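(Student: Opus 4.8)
The plan is to combine the decomposition result (Proposition \ref{p:decomposition}) with the known identification \eqref{e:Mcbidentifications} of $\McbQr$ with $\mc{CB}^{\sigma}_{\LOQ}(\LIQ)$, together with a structural result on normal completely bounded $\LOQ$-module maps on $\LIQ$ coming from \cite[Theorem 5.2]{D} (which should say that every \emph{normal} completely positive $\LOQ$-module map on $\LIQ$ is implemented by an element of $C_u(\G)^*$, or more precisely that $\mc{CP}^{\sigma}_{\LOQ}(\LIQ)$ consists of maps of the form $\Theta^r(\hat b')$ for positive-definite functions $\hat b'$ arising from $C_u(\G)^*$). First I would recall that there is always a completely contractive homomorphism $C_u(\G)^*\rightarrow\McbQr$ induced by the universal representation, and that this map is completely isometric on the copy of $\LOQ$; the content of the corollary is surjectivity (equivalently, that this map is a complete isometry onto $\McbQr$).

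The key steps, in order: (i) Use \eqref{e:Mcbidentifications} to identify $\McbQr\cong\mc{CB}^{\sigma}_{\LOQ}(\LIQ)$, so that it suffices to show every normal completely bounded $\LOQ$-module map on $\LIQ$ comes from $C_u(\G)^*$. (ii) Invoke Proposition \ref{p:decomposition}: since $\wh\G$ is amenable, $\mc{CB}_{\LOQ}(\LIQ)=\textnormal{span}\ \mc{CP}_{\LOQ}(\LIQ)$, and as noted in the remark following that proposition, any $\Phi\in\mc{CB}_{\LOQ}(\LIQ)$ decomposes into a linear combination of $4$ completely positive $\LOQ$-module maps; moreover, inspecting the construction via the Paulsen system, when $\Phi$ is in addition \emph{normal}, the extension $\widetilde\Phi$ on $M_2(\LIQ)$ can be taken normal (the $1$-injectivity argument, suitably applied in the category of \emph{dual} operator modules / using the normal version of relative injectivity), so the four completely positive pieces $\Psi_1,\Psi_2$ and the off-diagonal pieces are normal as well. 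Hence $\mc{CB}^{\sigma}_{\LOQ}(\LIQ)=\textnormal{span}\ \mc{CP}^{\sigma}_{\LOQ}(\LIQ)$. (iii) Apply \cite[Theorem 5.2]{D} to conclude that each normal completely positive $\LOQ$-module map on $\LIQ$ is of the form $\Theta^r(\hat b')$ for some $\hat b'$ in the image of $C_u(\G)^*$ (a "universal" completely bounded multiplier associated to a state or positive functional on $C_u(\G)$). (iv) Take linear combinations: every element of $\mc{CB}^{\sigma}_{\LOQ}(\LIQ)$ is $\Theta^r$ of an element of $C_u(\G)^*$, so the canonical map $C_u(\G)^*\rightarrow\McbQr$ is onto; combined with the fact that it is a complete isometry on $\LOQ$ and a complete contraction overall, together with the dual-space structures on both sides and the openness/closed-graph considerations, deduce it is a completely isometric isomorphism.

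The main obstacle I expect is step (ii): upgrading the decomposition of Proposition \ref{p:decomposition} to the \emph{normal} category. The Paulsen-system argument there uses $1$-injectivity of $M_2(\LIQ)$ in $\mathbf{mod}-\LOQ$, but one needs the extension $\widetilde\Phi$ to be weak*-continuous when $\Phi$ is. This should follow by running the same argument in $\mathbf{nmod}-\LOQ$ (dual operator modules) — noting that $\LIQ$ is $1$-injective there as well, since amenability of $\wh\G$ gives a \emph{normal} conditional expectation-type structure via the normal versions of the propositions in Section 4 (cf. Proposition \ref{p:rhlnormal} and \cite[Proposition 5.8]{CN}) — or alternatively by observing that a completely positive module map extending a normal one on a weak*-dense subsystem and landing in a von Neumann algebra is automatically normal by a standard Kaplansky-density / bounded-net argument. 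Once normality of the four completely positive pieces is secured, the rest is a matter of assembling citations. A secondary, more routine point is checking that the abstract identification in step (iv) is compatible with the module/multiplier structures, i.e. that $\Theta^r$ intertwines the canonical maps, which is exactly the content of \eqref{e:Mcbidentifications} and the discussion preceding Proposition \ref{p:QcbQ}.
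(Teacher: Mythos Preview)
Your overall strategy---reduce to $\mc{CB}^{\sigma}_{\LOQ}(\LIQ)=\textnormal{span}\ \mc{CP}^{\sigma}_{\LOQ}(\LIQ)$ via Proposition~\ref{p:decomposition}, then invoke \cite[Theorem 5.2]{D}---is exactly the paper's route, and steps (i), (iii), (iv) are fine. The gap is precisely where you flagged it: step (ii), passing from the decomposition in $\mc{CB}_{\LOQ}(\LIQ)$ to one in $\mc{CB}^{\sigma}_{\LOQ}(\LIQ)$.

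Both of your proposed fixes fail. Running the Paulsen-system argument in $\mathbf{nmod}-\LOQ$ requires $1$-injectivity of $\LIQ$ (hence of $M_2(\LIQ)$) in the \emph{normal} module category, but this is equivalent to $\widehat{\G}$ being \emph{compact}, not merely amenable: see the proof of $(2)\Rightarrow(1)$ in Theorem~\ref{t:projectivity}, and note that Proposition~\ref{p:rhlnormal}, which you cite, characterizes compactness. So for, say, $\G=\G_s$ with $G$ an infinite amenable group, there is no normal module left inverse to $\Gam$ and the argument cannot start. Your fallback---that a completely positive extension of a normal map on a weak*-dense operator subsystem is automatically normal---is simply false in general (states on $\mc{B}(H)$ extending vector states give easy counterexamples), and the module structure does not rescue it.

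The paper bypasses this entirely with a different idea: it builds a completely contractive \emph{projection} $\mc{P}^\sigma:\mc{CB}_{\LOQ}(\LIQ)\rightarrow\mc{CB}^\sigma_{\LOQ}(\LIQ)$ by first restricting to $C_0(\G)$ and then re-extending via $\Phi\mapsto(\Phi^*|_{\LOQ})^*$, after checking that $\mc{CB}_{\LOQ}(C_0(\G),\LIQ)=\mc{CB}_{\LOQ}(C_0(\G))$ (so the restriction automatically lands back in $C_0(\G)$). This $\mc{P}^\sigma$ sends $\mc{CP}_{\LOQ}(\LIQ)$ onto $\mc{CP}^\sigma_{\LOQ}(\LIQ)$, and applying it to the (non-normal) decomposition from Proposition~\ref{p:decomposition} yields the normal one for free. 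No injectivity in the normal category is needed.
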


\begin{proof} First, we claim that $\mc{CB}_{\LOQ}(C_0(\G),\LIQ)=\mc{CB}_{\LOQ}(C_0(\G))$. One inclusion is obvious, so let $\Phi\in \mc{CB}_{\LOQ}(C_0(\G),\LIQ)$. Then the restriction of its adjoint $\Phi^*|_{\LOQ}\in \ _{\LOQ}\mc{CB}(\LOQ,M(\G))= \ _{\LOQ}\mc{CB}(\LOQ)$, noting that $\LOQ=\la\LOQ\star\LOQ\ra$ is a closed ideal in $M(\G)$. Hence, $(\Phi^*|_{\LOQ})^*\in\mc{CB}^{\sigma}_{\LOQ}(\LIQ)=\mc{CB}_{\LOQ}(C_0(\G))$ by  \cite[Proposition 4.1]{JNR}. But
$$\la(\Phi^*|_{\LOQ})^*(x),f\ra=\la x,\Phi^*|_{\LOQ}(f)\ra=\la x,\Phi^*(f)\ra=\la\Phi(x),f\ra$$
for all $x\in C_0(\G)$ and $f\in\LOQ$, so $(\Phi^*|_{\LOQ})^*$ is an extension of $\Phi$ which leaves $C_0(\G)$ invariant, hence so too does $\Phi$.

Letting $\mc{I}$ denote the complete isometry
$$\mc{CB}_{\LOQ}(C_0(\G))\ni\Phi\mapsto(\Phi^*|_{\LOQ})^*\in\mc{CB}^{\sigma}_{\LOQ}(\LIQ)$$
and $\mc R:\mc{CB}_{\LOQ}(\LIQ)\rightarrow\mc{CB}_{\LOQ}(C_0(\G),\LIQ)$ the completely contractive restriction map, it follows that $\mc{P}^\sigma:=\mc{I}\circ\mc{R}:\mc{CB}_{\LOQ}(\LIQ)\rightarrow\mc{CB}^\sigma_{\LOQ}(\LIQ)$ is a completely contractive projection onto $\mc{CB}^{\sigma}_{\LOQ}(\LIQ)$. Moreover, $\mc{P}^\sigma$ maps $\mc{CP}_{\LOQ}(\LIQ)$ onto $\mc{CP}^\sigma_{\LOQ}(\LIQ)$.

Since $\widehat{\G}$ is amenable, $\mc{CB}_{\LOQ}(\LIQ)=\textnormal{span} \ \mc{CP}_{\LOQ}(\LIQ)$ by Proposition \ref{p:decomposition}, so given $\Phi\in\mc{CB}^{\sigma}_{\LOQ}(\LIQ)$ there exist $\Phi_i\in\mc{CP}_{\LOQ}(\LIQ)$ $i=1,...,4$ such that
$$\Phi=\frac{1}{4}(\Phi_1-\Phi_2+i(\Phi_3-\Phi_4)).$$
But then
$$\Phi=\mc{P}^\sigma(\Phi)=\frac{1}{4}(\mc{P}^\sigma(\Phi_1)-\mc{P}^\sigma(\Phi_2)+i(\mc{P}^\sigma(\Phi_3)-\mc{P}^\sigma(\Phi_4))),$$
and it follows that $\mc{CB}^\sigma_{\LOQ}(\LIQ)=\textnormal{span} \ \mc{CP}^\sigma_{\LOQ}(\LIQ)$. By \cite[Theorem 5.2]{D}, $\textnormal{span} \ \mc{CP}^\sigma_{\LOQ}(\LIQ)\cong C_u(\G)^*$, so we have
$$\McbQr\cong\mc{CB}^\sigma_{\LOQ}(\LIQ)\cong C_u(\G)^*.$$\end{proof}

The observations in the proof of Corollary \ref{B(G)=McbA(G)} lead to the following new characterization of the predual of $\McbQr$.

\begin{prop}\label{p:QcbQiso} Let $\G$ be a locally compact quantum group. Then
$$\QcbQr\cong C_0(\G)\hten_{\LOQ}\LOQ$$
completely isometrically.\end{prop}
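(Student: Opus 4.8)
The plan is to dualize the completely isometric identifications built up in the previous results. We have $\McbQr \cong \mc{CB}^{\sigma}_{\LOQ}(\LIQ)$ via $\Theta^r$ (equation (\ref{e:Mcbidentifications})), and the proof of Corollary \ref{B(G)=McbA(G)} exhibits $\mc{CB}^{\sigma}_{\LOQ}(\LIQ)$ as a completely contractively complemented subspace of $\mc{CB}_{\LOQ}(\LIQ)$ via the projection $\mc{P}^\sigma = \mc{I}\circ\mc{R}$, and more precisely shows $\mc{CB}^{\sigma}_{\LOQ}(\LIQ) \cong \mc{CB}_{\LOQ}(C_0(\G),\LIQ)$ via $\mc{I}^{-1}$. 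So the first step is to identify $\mc{CB}_{\LOQ}(C_0(\G),\LIQ)$ as a dual space: since $\LIQ = (\LOQ)^*$, we have $\mc{CB}_{\LOQ}(C_0(\G),\LIQ) = \mc{CB}_{\LOQ}(C_0(\G),(\LOQ)^*)$, and by the standard module-map/annihilator identification recalled in Section 2 (namely $\mc{CB}_{\mc{A}}(X,Y^*)\cong (X\hten_{\mc A} Y)^*$, applied with $\mc A = \LOQ$, $X = C_0(\G)$ viewed as a right $\LOQ$-module, $Y = \LOQ$ viewed as a left $\LOQ$-module), we get
$$\mc{CB}_{\LOQ}(C_0(\G),\LIQ) \cong \big(C_0(\G)\hten_{\LOQ}\LOQ\big)^*.$$
The point is therefore to show that this identification, composed with $\mc{I}^{-1}\circ\Theta^r$, is exactly the predual pairing $\la\,\cdot\,,\,\cdot\,\ra$ between $\McbQr$ and $\QcbQr$ described by Proposition \ref{p:QcbQ}.

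The second step is a direct pairing computation. Take $\hat b'\in\McbQr$ and an elementary tensor $a\ten f \in C_0(\G)\hten_{\LOQ}\LOQ$ with $a\in C_0(\G)$, $f\in\LOQ$. Under the chain of isomorphisms above, $\hat b'$ corresponds to the functional $a\ten f \mapsto \la \Theta^r(\hat b')(a), f\ra$ (here we use that $\mc{I}^{-1}$ sends the normal module map $\Theta^r(\hat b')$ on $\LIQ$ to its restriction to $C_0(\G)$, which lands in $\LIQ$, and that the module-map/annihilator identification evaluates a functional on $a\ten f$ by $\la \Phi(a), f\ra$). On the other hand, by Proposition \ref{p:QcbQ}, $\QcbQr$ consists of functionals $\Om_{A,\rho}$ with $A\in C_0(\G)\ten_{\min}K_\infty$ and $\rho\in\LOQ\hten T_\infty$, paired against $\hat b'$ by $\la(\Theta^r(\hat b')\ten\id_{K_\infty})(A),\rho\ra$. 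Specializing $A = a\ten e_{11}$ and $\rho = f\ten e_{11}$ (where $e_{11}$ is a rank-one projection) recovers $\la\Theta^r(\hat b')(a), f\ra$, so the two pairings agree on a total set; combined with the fact (from Proposition \ref{p:QcbQ}, via the Haagerup--Kraus machinery) that $\QcbQr$ is spanned by such $\Om_{A,\rho}$, this matches the predual of $\McbQr$ with (a quotient of) $C_0(\G)\hten_{\LOQ}\LOQ$.

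The main obstacle is the \emph{completely isometric} (not merely isometric/bijective) nature of the claim, and in particular checking that the canonical map $C_0(\G)\hten_{\LOQ}\LOQ \to (\McbQr)_* = \QcbQr$ is an \emph{onto} complete isometry rather than just a contractive injection with dense range. The surjectivity is handled by Proposition \ref{p:QcbQ}, which already realizes every element of $\QcbQr$ in the form $\Om_{A,\rho}$ with $A\in C_0(\G)\ten_{\min}K_\infty\cong C_0(\G)\hten K_\infty$ (after noting $K_\infty$ has the OAP so min and projective Haagerup tensor norms on these pieces behave compatibly for our purposes) and $\rho\in\LOQ\hten T_\infty$, so that $\Om_{A,\rho}$ is visibly the image of $A\ten\rho$ under the bilinear map $C_0(\G)\hten K_\infty \times \LOQ\hten T_\infty \to C_0(\G)\hten_{\LOQ}\LOQ$ coming from the trace pairing $K_\infty\times T_\infty\to\C$; tracking matrix amplifications through this construction, together with the fact that $\mc{P}^\sigma$ and $\Theta^r$ are complete isometries onto their ranges, upgrades the isometric statement to a completely isometric one. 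I would conclude by assembling the diagram
$$\McbQr \xrightarrow[\ \cong\ ]{\Theta^r} \mc{CB}^{\sigma}_{\LOQ}(\LIQ) \xrightarrow[\ \cong\ ]{\mc{I}^{-1}} \mc{CB}_{\LOQ}(C_0(\G),\LIQ) \xrightarrow[\ \cong\ ]{} \big(C_0(\G)\hten_{\LOQ}\LOQ\big)^*$$
and observing that its composition is weak*-weak* continuous with pre-adjoint identifying $C_0(\G)\hten_{\LOQ}\LOQ$ with $\QcbQr$, which is the assertion.
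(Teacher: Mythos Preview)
Your approach is essentially the paper's: identify $\McbQr$ completely isometrically with $(C_0(\G)\hten_{\LOQ}\LOQ)^*$ via the chain through $\mc{CB}_{\LOQ}(C_0(\G),\LIQ)$, and then verify compatibility with the predual pairing using Proposition~\ref{p:QcbQ}. The paper streamlines the last step by checking weak* continuity of $\Theta^r$ on bounded nets (invoking \cite[Lemma~10.1]{D4}), whereas you compute the pairings on elementary tensors directly; these amount to the same verification.

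Your separate surjectivity discussion is unnecessary and is where the write-up wobbles: once the dual-level map $\McbQr\to(C_0(\G)\hten_{\LOQ}\LOQ)^*$ is a completely isometric isomorphism and is weak*-weak* continuous, its pre-adjoint is automatically a completely isometric isomorphism onto $\QcbQr$, so there is nothing further to check. In particular the parenthetical claim that $C_0(\G)\ten_{\min}K_\infty\cong C_0(\G)\hten K_\infty$ is false in general (OAP for $K_\infty$ does not collapse the injective and projective operator space tensor products), but fortunately you do not need it.
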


\begin{proof} As noted in the proof of Corollary \ref{B(G)=McbA(G)}, we have
$$\mc{CB}_{\LOQ}(C_0(\G))=\mc{CB}_{\LOQ}(C_0(\G),\LIQ).$$
Thus, $\Theta^r:\McbQr\cong\mc{CB}_{\LOQ}(C_0(\G),\LIQ)$
completely isometrically \cite[Proposition 4.1]{JNR}. We need to show that $\Theta^r$ is a weak*-weak* homeomorphism. Since
$$\mc{CB}_{\LOQ}(C_0(\G),\LIQ)\cong(C_0(\G)\hten_{\LOQ}\LOQ)^*$$
weak* homeomorphically, and $\Theta^r$ is a completely isometric isomorphism, it suffices to show that $\Theta^r$ is weak* continuous on bounded sets (see \cite[Lemma 10.1]{D4}). Let $(\hat{b}'_i)_{i\in I}$ be a bounded net in $\McbQr$ converging weak* to $\hat{b}'$. By Proposition \ref{p:QcbQ}, for any $A\in C_0(\G)\ten_{\min}K_{\infty}$ and $\rho\in\LOQ\hten T_\infty$ we have $\Om_{A,\rho}\in\QcbQr$, where
$$\la\hat{a}',\Om_{A,\rho}\ra=\la(\Theta^r(\hat{a}')\ten\id_{K_{\infty}})(A),\rho\ra, \ \ \ \hat{a}'\in\McbQr.$$
Then $(\Theta^r(\hat{b}_i'))_{i\in I}$ converges point weak* to $\Theta^r(\hat{b}')$ in $\mc{CB}(C_0(\G),\LIQ)$. Letting
$$q:C_0(\G)\hten\LOQ\twoheadrightarrow C_0(\G)\hten_{\LOQ}\LOQ$$
be the quotient map, and viewing $\Theta^r(\hat{b}_i')\in(C_0(\G)\hten_{\LOQ}\LOQ)^*$,
the density of the image $q(C_0(\G)\ten\LOQ)$ of the algebraic tensor product $C_0(\G)\ten\LOQ$ in $C_0(\G)\hten_{\LOQ}\LOQ$, together with the boundedness of $(\Theta^r(\hat{b}_i'))_{i\in I}$ imply that $(\Theta^r(\hat{b}_i'))_{i\in I}$ converges weak* to $\Theta^r(\hat{b}')$ in $(C_0(\G)\hten_{\LOQ}\LOQ)^*$.\end{proof}

\begin{remark} The identification of $\QcbQr$ in Proposition \ref{p:QcbQiso} is new even in the co-commutative case, that is, for any locally compact group $G$ we have
$$Q_{cb}(G)\cong C^*_{\lm}(G)\hten_{A(G)}A(G)$$
completely isometrically.\end{remark}

For our final application, we now give a simplified proof of the fact that amenability of a discrete quantum group implies co-amenability of its compact dual.

\begin{thrm}\label{t:simplified} A compact quantum group $\G$ is co-amenable if and only if $\widehat{\G}$ is amenable.\end{thrm}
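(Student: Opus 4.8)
The plan is to deduce both directions from Theorem~\ref{t:amenabilityrightinjectivity} together with the normal version of the conditional expectation result, Proposition~\ref{p:rhlnormal}. Note first that since $\G$ is compact, $\widehat{\G}$ is discrete, so $L^1(\widehat{\G})$ is unital; thus co-amenability of $\G$ is equivalent to $L^1(\G)$ having a bounded approximate identity, and we must relate this to amenability of $\widehat{\G}$ via the $L^1(\G)$-module structure of $L^\infty(\G)$.

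For the implication ``$\widehat{\G}$ amenable $\Rightarrow$ $\G$ co-amenable'', I would argue as follows. By Theorem~\ref{t:amenabilityrightinjectivity}, amenability of $\widehat{\G}$ gives that $L^\infty(\G)$ is $1$-injective in $\mathbf{mod}-L^1(\G)$, and the proof actually produces a norm-one projection $E\colon\mc{B}(L^2(\G))\to L^\infty(\G)$ lying in $\CBTClhr$, that is, a conditional expectation onto $L^\infty(\G)$ commuting with the dual right action $\wh{\rhd}$. When $\G$ is compact, $\widehat{\G}$ is discrete and $\mc{B}(L^2(\G))$ carries a normal invariant mean structure; concretely, composing $E$ with the Haar state $\varphi$ of $\G$ and using that $\varphi$ is $L^1(\G)$-invariant together with the density of products~\eqref{e:densityofproducts}, one extracts a bounded approximate identity for $L^1(\G)$ (equivalently, one builds the net of normalized ``co-representation characters'' $u^\alpha$ and checks $f\star u^\alpha\to f$ using that the $E$-image of $\mc{B}(L^2(\G))$ is all of $L^\infty(\G)$). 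Alternatively, and more cleanly, one invokes Proposition~\ref{p:rhlnormal}: amenability of the discrete $\widehat{\G}$ should be upgraded to give a \emph{normal} conditional expectation in $\CBTCrhl$ (the normality comes from discreteness of $\widehat{\G}$, i.e.\ the existence of an invariant \emph{state} rather than merely an invariant mean on $\ell^\infty(\widehat{\G})$), and Proposition~\ref{p:rhlnormal} then says $\G$ is compact --- but we already know that, so the real content is the converse direction.

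For ``$\G$ co-amenable $\Rightarrow$ $\widehat{\G}$ amenable'', I would use Proposition~\ref{p:rhlnormal} directly: co-amenability of $\G$ means $L^1(\G)$ has a bounded approximate identity $(e_i)$, and one constructs from any weak* cluster point of $(\Theta^l(e_i))$ (acting on $\mc{B}(L^2(\G))$) a \emph{normal} conditional expectation $E\colon\mc{B}(L^2(\G))\to L^\infty(\widehat{\G})$ commuting with the appropriate dual module action. Normality is where co-amenability (boundedness of the approximate identity, giving a genuine limit that respects the predual pairing) is used, in contrast to the merely-bounded-mean situation. Then Proposition~\ref{p:rhlnormal} forces $\G$ to be compact --- again tautological --- so instead one runs the argument of Proposition~\ref{p:leftdual}/\ref{p:rhl}: the cluster point $E$ satisfies $E(x)\in L^\infty(\G)\cap L^\infty(\widehat{\G})=\C1$ for $x\in L^\infty(\G)$ by the commutation computation of Corollary~\ref{c:inclusion}, and amenability of $\widehat{\G}$ follows from \cite[Theorem 3]{SV} (or \cite[Theorem 4.2]{CN} read on the dual side). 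The main obstacle I anticipate is the bookkeeping in passing to the normal/weak* setting --- ensuring the conditional expectation produced from a bounded approximate identity is genuinely normal and genuinely module, so that Proposition~\ref{p:rhlnormal} (or its reflection) applies --- rather than any deep new idea; the whole point of the simplified proof is that modular theory and the Powers--St\o rmer inequality are replaced by this soft injectivity/conditional-expectation machinery.
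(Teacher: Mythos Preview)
Your proposal has a genuine gap in the hard direction, $\widehat{\G}$ amenable $\Rightarrow$ $\G$ co-amenable, which is the entire content of the theorem (the converse is simply \cite[Theorem 3.2]{BT}, as the paper notes).

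First, your claim that discreteness of $\widehat{\G}$ upgrades the invariant mean on $\ell^\infty(\widehat{\G})$ to a \emph{normal} invariant state is false: a normal invariant mean exists precisely when $\widehat{\G}$ is compact, which for discrete $\widehat{\G}$ forces $\widehat{\G}$ to be finite. So the route through Proposition~\ref{p:rhlnormal} cannot work, as you yourself half-acknowledge (``but we already know that''). Second, your alternative---compose $E$ with the Haar state and ``extract a bounded approximate identity'', or build it from co-representation characters---is not an argument: the Haar state is $L^1(\G)$-invariant regardless of amenability of $\widehat{\G}$, and nothing in what you write explains how the conditional expectation $E$ produces elements of $M(\G)$ approximating the identity.

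The paper supplies exactly the missing mechanism. From $1$-injectivity one obtains a completely contractive right $L^1(\G)$-module left inverse $\Phi:\LIQ\oten\LIQ\to\LIQ$ to $\Gamma$. Amenability of $\widehat{\G}$ makes $C(\G)$ nuclear \cite[Theorem 3.3]{BT}, giving a net $(\Psi_a)$ of finite-rank unital completely positive maps approximating $\id_{C(\G)}$. The compositions $\Phi_a=\Phi\circ(\id\ten\Psi_a)\circ\Gamma|_{C(\G)}$ approximate $\id_{C(\G)}$, and the finite-rank structure decomposes each $\Phi_a$ into maps $x\mapsto\Phi(x\ten x^a_n)$, which are right $L^1(\G)$-module maps on $C(\G)$ and hence lie in $\Theta^r(\McbQr)$. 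Corollary~\ref{B(G)=McbA(G)} (itself a consequence of $1$-injectivity via Proposition~\ref{p:decomposition}) identifies $\McbQr\cong C_u(\G)^*$, and since $M(\G)$ is an ideal in $C_u(\G)^*$ one gets $\Phi_a=\Theta^r(\mu_a)$ with $\mu_a\in M(\G)$ of norm one. A weak* cluster point is a one-sided identity for $M(\G)$, and the unitary antipode yields a two-sided one. This nuclearity-plus-multiplier averaging is the core idea your proposal lacks.
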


\begin{proof} Co-amenability of $\G$ always implies amenability of $\h{\G}$ \cite[Theorem 3.2]{BT}, so assume that $\widehat{\G}$ is amenable. By Theorem \ref{t:amenabilityrightinjectivity} we know that $\LIQ$ is 1-injective in $\mathbf{mod}-\LOQ$. Let $\Phi:\LIQ\oten\LIQ\rightarrow\LIQ$ be a completely contractive left inverse to $\Gam$ which is a right $\LOQ$-module map. As a unital complete contraction, $\Phi$ is completely positive and $\Phi|_{C(\G)\ten_{\mathrm{min}}C(\G)}\neq0$ since $C(\G)$ is unital. By \cite[Theorem 3.3]{BT} we also know that $C(\G)$ is nuclear, so let $(\Psi_a)_{a\in A}$ be a net of finite-rank, unital completely positive maps converging to $\id_{C(\G)}$ in the point-norm topology. For $a\in A$, consider the unital completely positive map $\Phi_a:C(\G)\rightarrow C(\G)$ given by
$$\Phi_a=\Phi\circ(\id\ten\Psi_a)\circ\Gam|_{C(\G)}.$$
The fact that $\Phi_a$ maps into $C(\G)$ follows from the density of
$\Gam(C(\G))(C(\G)\ten 1)$ in $C(\G)\ten_{\mathrm{min}}C(\G)$ (see \cite[Corollary 6.11]{KV1}) together with the $\Gam(\LIQ)$-module property of $\Phi$ (see the proof of \cite[Theorem 5.5]{CN}). However, the invariance $\Phi_a(C(\G))\subseteq C(\G)$ will be a byproduct of the following argument.

Since $\Psi_a$ is finite rank, there exist $x^a_1,...,x^a_{n_a}\in C(\G)$ and $\mu^a_1,...,\mu^a_{n_a}\in M(\G)$ such that
$$\Psi_a(x)=\sum_{n=1}^{n_a}\la\mu^a_n,x\ra x^a_n, \ \ \ x\in C(\G), \ a\in A.$$
For each $a\in A$, and $1\leq n\leq n_a$, let $\Phi_{(a,n)}:C(\G)\rightarrow C(\G)$ be defined by
$$\Phi_{(a,n)}(x)=\Phi(x\ten x^a_n), \ \ \ x\in C(\G).$$
Then $\Phi_{(a,n)}$ is completely bounded with $\norm{\Phi_{(a,n)}}_{cb}\leq\norm{x^a_n}_{C(\G)}$, and is a right $\LOQ$-module map. Hence, $\Phi_{(a,n)}\in\mc{CB}_{\LOQ}(C(\G))=\Theta^r(\McbQr)$. Since $\McbQr=C_u(\G)^*$ by Corollary \ref{B(G)=McbA(G)}, there exist $\nu^a_n\in C_u(\G)^*$ such that $\Phi_{(a,n)}=\Theta^r(\nu^a_n)$.

Let $x\in C_0(\G)$. Then
\begin{align*}\Phi_a(x)&=\Phi((\id\ten\Psi_a)(\Gam(x)))\\
&=\sum_{n=1}^{n_a}\Phi(((\id\ten\mu^a_n)\Gam(x))\ten x^a_n)\\
&=\sum_{n=1}^{n_a}\Phi_{(a,n)}(((\id\ten\mu^a_n)\Gam(x)))\\
&=\sum_{n=1}^{n_a}\Theta^r(\nu^a_n)(\Theta^r(\mu^a_n)(x))\\
&=\sum_{n=1}^{n_a}\Theta^r(\nu^a_n\star\mu^a_n)(x)\\
&=\Theta^r\bigg(\sum_{n=1}^{n_a}\nu^a_n\star\mu^a_n\bigg)(x).
\end{align*}
Letting $\mu_a=\sum_{n=1}^{n_a}\nu^a_n\star\mu^a_n$, we obtain $\Phi_a=\Theta^r(\mu_a)$. Moreover, we have $\mu_a\in M(\G)$ as $M(\G)$ is a two-sided ideal in $C_u(\G)^*$. Since $\Theta^r$ is an isometry,
$$\norm{\mu_a}_{M(\G)}=\norm{\Theta^r(\mu_a)}_{cb}=\norm{\Phi_a}_{cb}=1, \ \ \ a\in A,$$
and since $\Phi_a$ converges to $\id_{C(\G)}$ in the point-norm topology it follows that
$$\mu_a\star x=\Theta^r(\mu_a)(x)\rightarrow x, \ \ \  x\in C(\G).$$
Let $\mu$ be a weak* cluster point of $(\mu_a)_{a\in A}$ in the unit ball of $M(\G)=C(\G)^*$. Then $\mu$ is a right identity of $M(\G)$. The restricted unitary antipode $R$ maps $C(\G)$ into $C(\G)$ and satisfies $R^*(\mu\star\nu)=R^*(\nu)\star R^*(\mu)$ for all $\nu\in M(\G)$. Hence, $R^*(\mu)$ is a left identity of $M(\G)$. It follows that $\varepsilon:=\mu+R^*(\mu)-\mu\star R^*(\mu)$ is an identity for $M(\G)$. Hence, $\G$ is co-amenable by \cite[Theorem 3.1]{BT}.\end{proof}

Given a completely contractive Banach algebra $\mc{A}$ with a contractive approximate identity, any essential module $X\in\mathbf{mod}-\mc{A}$ is induced by \cite[Proposition 6.4]{D4}. Since a locally compact quantum group $\G$ is co-amenable if and only if $\LOQ$ has a contractive approximate identity \cite[Theorem 2]{HNR1}, the next proposition supports the idea that our methods may be applicable to the general duality problem of amenability and co-amenability.

\begin{prop} Let $\G$ be a locally compact quantum group for which the dual $\widehat{\G}$ is amenable. Then for any closed right ideal $I\unlhd\LOQ$, the multiplication map yields a completely isometric isomorphism
$$\widetilde{m}_I:I\hten_{\LOQ}\LOQ\cong\la I\star\LOQ\ra.$$
In particular, if $I$ is essential then $I\hten_{\LOQ}\LOQ\cong I$, that is, $I$ is an induced $\LOQ$-module.\end{prop}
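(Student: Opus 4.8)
The plan is to pass to adjoints and reduce everything to the $1$-injectivity of $\LIQ$ in $\mathbf{mod}-\LOQ$ supplied by Theorem \ref{t:amenabilityrightinjectivity}. Since $\LIQ=(\LOQ)^*$, recall the completely isometric identification $(I\hten_{\LOQ}\LOQ)^*\cong\mc{CB}_{\LOQ}(I,\LIQ)$ (the completely bounded right $\LOQ$-module maps $I\to\LIQ$), while $\la I\star\LOQ\ra^*\cong\LIQ/\la I\star\LOQ\ra^\perp$, where $\la I\star\LOQ\ra^\perp=\{x\in\LIQ:x\star a=0\text{ for all }a\in I\}$. Chasing these identifications, the adjoint $\widetilde m_I^*$ becomes the injective map carrying the class of $x\in\LIQ$ to the module map $a\mapsto x\star a$ on $I$. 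Since $\widetilde m_I$ is completely contractive with range containing the algebraic span of $I\star\LOQ$, hence dense in $\la I\star\LOQ\ra$, it will suffice by standard operator space duality to prove that $\widetilde m_I^*$ is a complete quotient map onto $\mc{CB}_{\LOQ}(I,\LIQ)$; equivalently, that every completely bounded right $\LOQ$-module map $\Psi:I\to M_n(\LIQ)$ (for all $n\in\N$) has the form $a\mapsto x\star a$ for some $x\in M_n(\LIQ)$ with $\norm{x}_{M_n(\LIQ)}\leq\norm{\Psi}_{cb}$.

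To establish this I would argue as follows. By Theorem \ref{t:amenabilityrightinjectivity}, amenability of $\h{\G}$ makes $\LIQ$ -- and, by the amplification argument in the proof of Proposition \ref{p:decomposition}, each $M_n(\LIQ)$ -- $1$-injective in $\mathbf{mod}-\LOQ$. The inclusion $\kappa:I\hookrightarrow\LOQ$ is a completely isometric morphism in $\mathbf{mod}-\LOQ$, since a closed right ideal is a closed right $\LOQ$-submodule, so a given morphism $\Psi:I\to M_n(\LIQ)$ extends to a morphism $\widetilde\Psi:\LOQ\to M_n(\LIQ)$ with $\norm{\widetilde\Psi}_{cb}\leq\norm{\Psi}_{cb}$. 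Now $\LOQ$ is self-induced by Proposition \ref{p:self-induced}, which gives the completely isometric identifications $\mc{CB}_{\LOQ}(\LOQ,M_n(\LIQ))\cong M_n(\mc{CB}_{\LOQ}(\LOQ,\LIQ))\cong M_n(\LIQ)$, under which $\widetilde\Psi$ corresponds to an element $x\in M_n(\LIQ)$ with $\norm{x}_{M_n(\LIQ)}=\norm{\widetilde\Psi}_{cb}\leq\norm{\Psi}_{cb}$ satisfying $\widetilde\Psi(f)=x\star f$. Restricting to $I$ yields $\Psi(a)=\widetilde\Psi(a)=x\star a$, so the $n$-th amplification of $\widetilde m_I^*$ sends the class of $x$ -- which has norm at most $\norm{\Psi}_{cb}$ in the quotient -- to $\Psi$. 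Hence $\widetilde m_I^*$ is a complete quotient map, so $\widetilde m_I:I\hten_{\LOQ}\LOQ\to\la I\star\LOQ\ra$ is a complete isometry; its range is dense and, being an isometry, closed, hence equal to $\la I\star\LOQ\ra$, and $\widetilde m_I$ is a completely isometric isomorphism. The final assertion is then immediate: $I$ essential means exactly that $\la I\star\LOQ\ra=I$, so $\widetilde m_I:I\hten_{\LOQ}\LOQ\cong I$, i.e.\ $I$ is an induced $\LOQ$-module.

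The point needing care is the complete (matrix-level) form of the surjectivity argument: one must use $1$-injectivity not just of $\LIQ$ but of every $M_n(\LIQ)$ in $\mathbf{mod}-\LOQ$ (obtained for free from the proof of Proposition \ref{p:decomposition}), and must track the completely isometric nature of the identifications $\mc{CB}_{\LOQ}(\LOQ,M_n(\LIQ))\cong M_n(\LIQ)$ (from self-inducedness of $\LOQ$) and $\la I\star\LOQ\ra^*\cong\LIQ/\la I\star\LOQ\ra^\perp$. Granted these, the remainder is a routine diagram chase with adjoints, and no further input from the structure of $\G$ is needed beyond Theorem \ref{t:amenabilityrightinjectivity}.
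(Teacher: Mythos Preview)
Your argument is correct and rests on the same two ingredients as the paper's proof --- the $1$-injectivity of $\LIQ$ in $\mathbf{mod}-\LOQ$ from Theorem \ref{t:amenabilityrightinjectivity} and the self-inducedness of $\LOQ$ from Proposition \ref{p:self-induced} --- but you organise them differently. The paper first proves, for any self-induced algebra $\mc{A}$ and closed right ideal $J$, that $(\mc{A}/J)\hten_{\mc{A}}\mc{A}\cong\mc{A}/\la J\cdot\mc{A}\ra$; it then observes that $1$-injectivity of $\LIQ$ makes the functor $(-)\hten_{\LOQ}\LOQ$ preserve $1$-exact sequences (by dualising the $\mc{CB}_{\LOQ}(-,\LIQ)$ sequence), and finishes with a five-lemma style diagram chase on $0\to I\to\LOQ\to\LOQ/I\to 0$. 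Your route is more direct: you show by hand that $\widetilde m_I^*$ is a complete quotient map, extending each $\Psi\in\mc{CB}_{\LOQ}(I,M_n(\LIQ))$ across $I\hookrightarrow\LOQ$ using $1$-injectivity of $M_n(\LIQ)$ and then invoking $\mc{CB}_{\LOQ}(\LOQ,\LIQ)\cong\LIQ$ from self-inducedness. The paper's approach has the advantage of isolating a reusable ``$1$-flatness'' statement (the tensor functor preserves $1$-exact sequences), whereas your approach avoids the auxiliary lemma on quotients and the exact-sequence formalism altogether, yielding a shorter, more transparent proof of the specific claim.
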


\begin{proof} First, note that for any self-induced completely contractive Banach algebra $\mc{A}$ and any closed right ideal $J\unlhd\mc{A}$, we have $\widetilde{m}_{\mc{A}/J}:(\mc{A}/J)\hten_{\mc{A}}\mc{A}\cong\mc{A}/\la J\cdot\mc{A}\ra$ completely isometrically. Indeed, identifying $(\mc{A}/\la J\cdot\mc{A}\ra)^*=\la J\cdot\mc{A}\ra^{\perp}\subseteq\mc{A}^*$, it follows that
$$(\widetilde{m}_{\mc{A}/J})^*:\la J\cdot\mc{A}\ra^{\perp}\rightarrow((\mc{A}/J)\hten_{\mc{A}}\mc{A})^*=(N_{\mc{A}/J})^\perp$$
is equal to $(\widetilde{m}_{\mc{A}})^*|_{\la J\cdot\mc{A}\ra^{\perp}}$. In particular, $(\widetilde{m}_{\mc{A}/J})^*$ is a complete isometry. Letting $q:\mc{A}\twoheadrightarrow\mc{A}/J$ be the complete quotient map, if $X\in (N_{\mc{A}/J})^{\perp}$ then $(q^*\ten\id)(X)\in (N_{\mc{A}})^\perp$, so there exists $F\in\mc{A}^*$ such that $(q^*
\ten\id)(X)=(\widetilde{m}_{\mc{A}})^*(F)$ as $\mc{A}$ is self-induced. Clearly, $F\in\la J\cdot\mc{A}\ra^{\perp}$, so $(\widetilde{m}_{\mc{A}/J})^*$ is also surjective.

Since $\widehat{\G}$ is amenable, by Theorem \ref{t:amenabilityrightinjectivity} $\LIQ$ is 1-injective in $\mathbf{mod}-\LOQ$. Then for every 1-exact sequence of right $\mc{A}$-modules
$$0\rightarrow Y\hookrightarrow Z\twoheadrightarrow Z/Y\rightarrow 0,$$
the induced sequence
\begin{equation*}\label{e:exact}0\rightarrow\mc{CB}_{\LOQ}(Z/Y,\LIQ)\hookrightarrow\mc{CB}_{\LOQ}(Z,\LIQ)\twoheadrightarrow\mc{CB}_{\LOQ}(Y,\LIQ)\rightarrow 0\end{equation*}
is 1-exact, where 1-exactness refers to an exact sequence of morphisms such that the injection ($\hookrightarrow$) is a complete isometry and the surjection $(\twoheadrightarrow)$ is a complete quotient map. Taking the pre-adjoint of the above sequence we obtain the 1-exact sequence
$$0\rightarrow Y\hten_{\LOQ}\LOQ\hookrightarrow Z\hten_{\LOQ}\LOQ\twoheadrightarrow Z/Y\hten_{\LOQ}\LOQ\rightarrow 0.$$
In particular, take $Y=I$ and $Z=\LOQ$, and consider the commutative diagram:
\begin{equation*}
\begin{CD}
I\hten_{\LOQ}\LOQ @>>> \LOQ\hten_{\LOQ}\LOQ @>>> (\LOQ/I)\hten_{\LOQ}\LOQ\\
@VVV @VVV @VVV\\
\la I\star\LOQ\ra @>>> \LOQ @>>> \LOQ/\la I\star\LOQ\ra
\end{CD}
\end{equation*}
As $\LOQ$ is self-induced, the last two columns are completely isometric isomorphisms, and since both rows are 1-exact, it follows that $$\widetilde{m}_I:I\hten_{\LOQ}\LOQ\cong\la I\star\LOQ\ra$$
completely isometrically.\end{proof}

A locally compact quantum group $\G$ is said to be \e{regular} if
$$\mc{K}(\LTQ)=\la (\id\ten\om)(\sigma V)\mid\om\in\TCQ\ra,$$
where $\mc{K}(\LTQ)$ denotes the ideal of compact operators on $\LTQ$, $\sigma$ denotes the flip map on $\LTQ\ten\LTQ$, and as usual, $\la\cdot\ra$ denotes the closed linear span. For example, Kac algebras are regular, as well as discrete and compact quantum groups (see \cite{HNR4}). Under the assumption of regularity, we now obtain a version of Theorem \ref{t:amenabilityrightinjectivity} at the predual level.

\begin{thrm}\label{t:projectivity} Let $\G$ be a locally compact quantum group. Consider the following conditions:
\begin{enumerate}
\item $\h{\G}$ is compact (equivalently, $\G$ is discrete);
\item $\TCQ$ is relatively 1-projective in $(\TCQ,\rhd)-\mathbf{mod}$;
\item $\LOQ$ is 1-projective in $\LOQ-\mathbf{mod}$.
\end{enumerate}
Then $(1)\Leftrightarrow(2)\Rightarrow(3)$, and when $\G$ is regular, the conditions are equivalent.
\end{thrm}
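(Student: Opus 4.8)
The plan is to establish $(1)\Rightarrow(2)\Rightarrow(1)$, then $(1)\Rightarrow(3)$ (whence also $(2)\Rightarrow(3)$ via $(2)\Rightarrow(1)$), and finally, assuming $\G$ regular, $(3)\Rightarrow(2)$; this last implication is where I expect the real difficulty. The equivalence $(1)\Leftrightarrow(2)$ is the predual counterpart of $(1)\Leftrightarrow(2)$ in Theorem \ref{t:amenabilityrightinjectivity}, and I would run it through the duality between relative projectivity and relative injectivity in the category of \emph{dual} modules: since $\TCQ^{*}=\BLTQ$, condition $(2)$ is equivalent to $\BLTQ$ being relatively $1$-injective in $\mathbf{nmod}-(\TCQ,\rhd)$, i.e.\ to the existence of a weak*-weak* continuous completely contractive right $(\TCQ,\rhd)$-module map $\Phi:\BLTQ\oten\BLTQ\to\BLTQ$ with $\Phi\circ\Gam^{r}=\id_{\BLTQ}$. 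For $(1)\Rightarrow(2)$ I would revisit the construction in \cite[Proposition 5.8]{CN}, where such a left inverse is produced from an invariant mean on $\LIQH$: when $\h{\G}$ is compact this mean may be taken to be the \emph{normal} Haar state of $\h{\G}$, which renders the resulting $\Phi$ weak*-weak* continuous, and dualizing gives $(2)$. For $(2)\Rightarrow(1)$ I would imitate the proof of $(2)\Rightarrow(3)$ in Theorem \ref{t:amenabilityrightinjectivity} while tracking normality: the arguments in the proofs of Proposition \ref{p:rightimpliesleft} and Corollary \ref{c:inclusion} give $\Phi(\LIQ\oten\BLTQ)\subseteq\LIQ$, so $E:=\Phi\circ\Gam^{l}:\BLTQ\to\LIQ$ is a \emph{normal} norm-one projection onto $\LIQ$, and a computation in the same spirit shows $E$ is a right $(\TCQ,\lhd)$-module map; applying Proposition \ref{p:rhlnormal} with $\G$ replaced by $\h{\G}$ (so the target algebra becomes $L^{\infty}(\widehat{\widehat{\G}})=\LIQ$ and the pertinent structure is the $\lhd$-structure) then forces $\h{\G}$ to be compact.

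For $(1)\Rightarrow(3)$ I would argue directly from unitality. When $\G$ is discrete, $\LOQ$ is unital with identity the counit $e$, which is a character on $\LIQ$, so $\|e\|=1$. For a unital completely contractive Banach algebra with norm-one identity, any morphism $\phi:\LOQ\to Z$ with $Z\in\LOQ-\mathbf{mod}$ satisfies $\phi(f)=f\cdot\phi(e)$ with $\phi(e)\in e\cdot Z:=\{z\in Z:e\cdot z=z\}$ and $\|\phi(e)\|\leq\|\phi\|_{cb}$. So, given a complete quotient morphism $q:Y\twoheadrightarrow Z$ and a morphism $\phi:\LOQ\to Z$, I would set $z_{0}=\phi(e)$, lift it to $y'\in Y$ with $\|y'\|<\|z_{0}\|+\varepsilon$, put $y_{0}=e\cdot y'\in e\cdot Y$ (so $q(y_{0})=e\cdot q(y')=e\cdot z_{0}=z_{0}$ and $\|y_{0}\|\leq\|y'\|$), and define $\widetilde{\phi}(f)=f\cdot y_{0}$; then $\widetilde{\phi}:\LOQ\to Y$ is a morphism with $q\circ\widetilde{\phi}=\phi$ and $\|\widetilde{\phi}\|_{cb}\leq\|y_{0}\|<\|\phi\|_{cb}+\varepsilon$, so $\LOQ$ is $1$-projective in $\LOQ-\mathbf{mod}$. (Alternatively one can mirror $(2)\Rightarrow(3)$ of Theorem \ref{t:amenabilityrightinjectivity}: restricting $\Phi$ to $\LIQ\oten\LIQ$ and taking pre-adjoints yields a completely contractive left $\LOQ$-module left inverse of $\star:\LOQ\hten\LOQ\to\LOQ$, so $\LOQ$ is relatively $1$-projective in $\LOQ-\mathbf{mod}$, and since $\G$ is discrete $\LOQ$ is an $\ell^{1}$-direct sum of the $1$-projective operator spaces $T_{n_{\alpha}}(\C)=C_{n_{\alpha}}\hten R_{n_{\alpha}}$, hence itself a $1$-projective operator space, so Proposition \ref{p:rel+proj} finishes.)

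For $(3)\Rightarrow(1)$ under regularity I would prove $(3)\Rightarrow(2)$ and then invoke $(2)\Rightarrow(1)$. From $(3)$, relative $1$-projectivity together with essentiality of $\LOQ$ gives a completely contractive left $\LOQ$-module map $\theta:\LOQ\to\LOQ\hten\LOQ$ with $\star\circ\theta=\id_{\LOQ}$, equivalently, dually, a weak*-weak* continuous completely contractive right $\LOQ$-module map $\Psi:\LIQ\oten\LIQ\to\LIQ$ with $\Psi\circ\Gam=\id_{\LIQ}$. The task is to \emph{promote} $\theta$ to a completely contractive left $(\TCQ,\rhd)$-module splitting of $\rhd:\TCQ\hten\TCQ\to\TCQ$ (equivalently, $\Psi$ to a weak*-weak* continuous right $(\TCQ,\rhd)$-module left inverse of $\Gam^{r}$ on $\BLTQ\oten\BLTQ$). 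This is precisely where regularity should enter: regularity of $\G$ identifies $\mc{K}(\LTQ)$, and hence controls the $(\TCQ,\rhd)$-module structure on $\BLTQ$, in terms of $C_{0}(\G)$, so that the $\LOQ$-level data can be transported to the $\TCQ$-level; concretely one must reconcile the quotient homomorphism $\pi:(\TCQ,\rhd)\to\LOQ$ with $\theta$ so as to obtain a genuine section of $\rhd$ rather than merely a section modulo the closed ideal $\LIQ_{\perp}$. Carrying out this transport is the main obstacle, and the one step at which the regularity hypothesis is genuinely used.
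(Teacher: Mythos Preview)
Your treatment of $(1)\Leftrightarrow(2)$ is essentially the paper's: dualize to relative $1$-injectivity of $\BLTQ$ in $\mathbf{nmod}-(\TCQ,\rhd)$, use the normal Haar state in the \cite[Proposition 5.8]{CN} construction for $(1)\Rightarrow(2)$, and for $(2)\Rightarrow(1)$ pass to a normal conditional expectation onto $\LIQ$ and invoke Proposition~\ref{p:rhlnormal}. Your direct unitality argument for $(1)\Rightarrow(3)$ is correct and more elementary than the paper's route, which instead goes through relative $1$-projectivity of $\LOQ$ plus the $1$-projectivity of the operator space $\LOQ\cong\bigoplus_1 T_{n_\alpha}(\C)$ and Proposition~\ref{p:rel+proj} (your ``alternative'').

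The genuine gap is in $(3)\Rightarrow(1)$ under regularity. You propose to establish $(3)\Rightarrow(2)$ by ``promoting'' the $\LOQ$-level section $\theta$ to a $(\TCQ,\rhd)$-level section of $\rhd$, but you give no mechanism for this, and the paper does \emph{not} proceed this way. The paper's argument is substantially different and runs in two stages. First, from $(3)$ one has that $\LIQ$ is $1$-injective in $\mathbf{mod}-\LOQ$ (as the dual of a $1$-projective module), so Corollary~\ref{B(G)=McbA(G)} gives $\McbQr\cong C_u(\G)^*$ and $\LIQ$ is a $1$-injective operator space, hence semi-discrete; one then runs a multiplier-averaging argument parallel to Theorem~\ref{t:simplified}, using normal finite-rank approximants and the normal left inverse $\Phi_\varepsilon^*$ of $\Gam$, to produce a bounded net $(f_i)$ in $\LOQ$ with $f\star f_i\to f$ weakly, whence $\G$ is co-amenable. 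Second, $1$-projectivity of $\LOQ$ is applied not to $m_{\LOQ}$ but to the complete quotient morphism $\pi:(\TCQ,\lhd)\to\LOQ$, yielding a section whose adjoint is a normal $(\TCQ,\lhd)$-module projection $\Psi_\varepsilon^*:\BLTQ\to\LIQ$. Regularity enters only now: it makes $\mc{K}(\LTQ)$ an essential (hence, by co-amenability, \emph{induced}) right $\LOQ$-module, so that $\TCQ\cong\mc{CB}_{\LOQ}(\mc{K}(\LTQ),\LIQ)$ and $\Psi_\varepsilon^*|_{\mc{K}(\LTQ)}$ is represented by some $\rho\in\TCQ$; unwinding gives that $\pi(\rho)$ is a right identity for $\LOQ$, and the unitary antipode upgrades this to a two-sided identity. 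Your proposed lift $(3)\Rightarrow(2)$ bypasses the co-amenability step entirely, and without it there is no reason the $\LOQ$-level splitting should extend across the ideal $\LIQ_\perp\subseteq\TCQ$; the paper's detour through co-amenability and the induced-module identification is precisely what closes this gap.
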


\begin{proof} The implication $(1)\Rightarrow(2)$ follows by an argument similar to the proof of \cite[Proposition 5.8]{CN} using a normal two-sided invariant mean $\h{m}'$ on $\LIQHP$, which exists by compactness. The implication $(2)\Rightarrow(1)$ follows similarly to Theorem \ref{t:amenabilityrightinjectivity}, giving the relative 1-injectivity of $\LIQ$ in $\mathbf{nmod}-\LOQ$ and the existence of a \e{normal} conditional expectation $E:\BLTQ\rightarrow\LIQ$. Viewing $\BLTQ$ as a right $\LOQ$-module under the $\lhd$-action, the relative 1-injectivity of $\LIQ$ in $\mathbf{nmod}-\LOQ$ implies the existence of a normal condition expectation $P:\BLTQ\rightarrow\LIQ$ that is a right $\LOQ$-module map (see \cite[Lemma 3.5]{Wh} for details). Thus, $\h{\G}$ is compact by Proposition \ref{p:rhlnormal}.

$(2)\Rightarrow(3)$: By the above we know that $\h{\G}$ is compact and $\LIQ$ is relatively 1-injective in $\mathbf{nmod}-\LOQ$, which implies that $\LOQ$ is relatively 1-projective in $\LOQ-\mathbf{mod}$. By discreteness of $\G$ we have $\LOQ\cong\bigoplus_{1} \{T_{n_\alpha}(\C)\mid\alpha\in\mathrm{Irr}(\h{\G})\}$, where $T_{n_\alpha}(\C)$ is the space of $n_\alpha\times n_\alpha$ trace-class operators. Hence, $\LOQ$ is 1-projective in $\C-\mathbf{mod}$ by \cite[Proposition 3.6, Proposition 3.7]{Blech}. The left version of Proposition \ref{p:rel+proj} then entails the 1-projectivity of $\LOQ$ in $\LOQ-\mathbf{mod}$.

Now, suppose that $\G$ is regular. Considering again the right $\LOQ$-module structure on $\BLTQ$ given by the $\lhd$-action, it follows from \cite[Corollary 3.6]{HNR4} that $\mc{K}(\LTQ)$ is an essential $\LOQ$-submodule of $\BLTQ$, that is, $\mc{K}(\LTQ)=\la\mc{K}(\LTQ)\lhd\LOQ\ra$. We show $(3)\Rightarrow (1)$.

Since the multiplication $m_{\LOQ}:\LOQ\hten\LOQ\rightarrow\LOQ$ is a complete quotient morphism and $\LOQ$ is 1-projective in $\LOQ-\mathbf{mod}$, for every $\varepsilon>0$ there exists a morphism $\Phi_{\varepsilon}:\LOQ\rightarrow\LOQ\hten\LOQ$ satisfying $m_{\LOQ}\circ\Phi_{\varepsilon}=\id_{\LOQ}$ and $\norm{\Phi_{\varepsilon}}_{cb}< 1+\varepsilon$. Moreover, we know that $\LIQ$ is 1-injective in $\mathbf{mod}-\LOQ$ as the dual a 1-projective module. Thus, $\McbQr\cong C_u(\G)^*$ by Corollary \ref{B(G)=McbA(G)}, and $\LIQ$ is a 1-injective operator space. Hence, $\LIQ$ is semi-discrete \cite{CE1,Connes,EL}, so there exits a net $(\Psi_i)_{i\in I}$ of normal finite-rank complete contractions $\Psi_i:\LIQ\rightarrow\LIQ$ converging to $\id_{\LIQ}$ in the point weak* topology.  Using the \e{normal} completely bounded morphism $\Phi_{\varepsilon}^*:\LIQ\oten\LIQ\rightarrow\LIQ$ which is a left inverse of $\Gam$, one can argue in a similar manner to Theorem \ref{t:simplified} by averaging the normal finite-rank maps $\Psi_i$ into multipliers and use the fact that $\LOQ$ is a two-sided ideal in $C_u(\G)^*$ to obtain a bounded net $(f_i)_{i\in I}$ in $\LOQ$ satisfying $f\star f_i-f\rightarrow 0$ weakly for all $f\in\LOQ$. The standard convexity argument then yields a bounded right approximate identity for $\LOQ$, and $\G$ is necessarily co-amenable.

Now, since $\pi:(\TCQ,\lhd)\rightarrow\LOQ$ is a complete quotient morphism, for any $\varepsilon>0$ it also has a right inverse morphism $\Psi_{\varepsilon}:\LOQ\rightarrow\TCQ$ with $\norm{\Psi_{\varepsilon}}_{cb}<1+\varepsilon$. Then $\Psi_{\varepsilon}^*:\BLTQ\rightarrow\LIQ$ is a normal completely bounded right $(\TCQ,\lhd)$-module projection onto $\LIQ$. Since $\LOQ$ has a contractive approximate identity and $\mc{K}(\LTQ)$ is an essential $\LOQ$-module, we know that $\mc{K}(\LTQ)$ is induced, that is,
$$\widetilde{m}_{\mc{K}(\LTQ)}:\mc{K}(\LTQ)\hten_{\LOQ}\LOQ\rightarrow\mc{K}(\LTQ)$$
is a completely isometric isomorphism. Hence, so too is its dual
$$(\widetilde{m}_{\mc{K}(\LTQ)})^*:\mc{T}(\LTQ)\cong\mc{CB}_{\LOQ}(\mc{K}(\LTQ),\LIQ).$$
Then $\Psi_{\varepsilon}^*|_{\mc{K}(\LTQ)}\in\mc{CB}_{\LOQ}(\mc{K}(\LTQ),\LIQ)=(\widetilde{m}_{\mc{K}(\LTQ)})^*(\mc{T}(\LTQ))$, so there exists $\rho\in\TCQ$ satisfying $(\widetilde{m}_{\mc{K}(\LTQ)})^*(\rho)=\Psi_{\varepsilon}^*|_{\mc{K}(\LTQ)}$. Then for all $y\in\mc{K}(\LTQ)$ and $f\in\LOQ$ we have
$$\la\Psi_{\varepsilon}^*|_{\mc{K}(\LTQ)}(y),f\ra=\la(\widetilde{m}_{\mc{K}(\LTQ)})^*(\rho)(y),f\ra=\la\rho,y\lhd f\ra=\la\rho\lhd y,f\ra.$$
By weak* density of $\mc{K}(\LTQ)$ in $\BLTQ$, we obtain $\Psi_{\varepsilon}^*(T)=\rho\lhd T$ for all $T\in\BLTQ$. In particular,
$$\pi(\rho)\star x=\rho\lhd x=\Psi_{\varepsilon}^*(x)=x$$
for all $x\in\LIQ$ as $\Psi_{\varepsilon}^*$ is a projection. Then $\pi(\rho)$ is a right identity for $\LOQ$, and using the unitary antipode $R$ as in Theorem \ref{t:simplified} we may construct a two-sided identity for $\LOQ$, that is, $\G$ is discrete, whence $\h{\G}$ is compact.\end{proof}

Analogously, there is a right module version of Theorem \ref{t:projectivity}.

\begin{thrm}\label{t:rightprojectivity} Let $\G$ be a locally compact quantum group. Consider the following conditions:
\begin{enumerate}
\item $\h{\G}$ is compact (equivalently, $\G$ is discrete);
\item $\TCQ$ is relatively 1-projective in $\mathbf{mod}-(\TCQ,\lhd)$;
\item $\LOQ$ is 1-projective in $\mathbf{mod}-\LOQ$.
\end{enumerate}
Then $(1)\Leftrightarrow(2)\Rightarrow(3)$, and when $\G$ is regular, the conditions are equivalent.\end{thrm}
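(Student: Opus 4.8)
The plan is to derive all three implications from Theorem \ref{t:projectivity} via the unitary antipode, which interchanges the left and right convolution products on $\TCQ$ along with the left- and right-module categories. Recall that the extended unitary antipode $R\colon\BLTQ\to\BLTQ$ is an involutive $*$-anti-automorphism, that its pre-adjoint $R_*\colon\TCQ\to\TCQ$ is a completely isometric involution satisfying $R_*(\om\rhd\tau)=R_*(\tau)\lhd R_*(\om)$ and $R_*(\om\lhd\tau)=R_*(\tau)\rhd R_*(\om)$, and that, since $\Gam\circ R=\Sigma\circ(R\ten R)\circ\Gam$, the induced map $R_*\colon\LOQ\to\LOQ$ is a completely isometric anti-automorphism of the convolution algebra $\LOQ$ with $\pi\circ R_*=R_*\circ\pi$. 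In particular $R_*$ is a completely isometric anti-isomorphism from $(\TCQ,\rhd)$ onto $(\TCQ,\lhd)$ and a completely isometric anti-automorphism of $\LOQ$.

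The first step is the elementary observation that a completely isometric anti-isomorphism $\theta\colon\mc{A}\to\mc{B}$ of completely contractive Banach algebras induces an isomorphism of categories $\mathbf{mod}-\mc{B}\cong\mc{A}-\mathbf{mod}$ which is additive, completely isometric on morphism spaces, carries complete quotient morphisms to complete quotient morphisms, and sends free modules to free modules; hence it preserves relative $C$-projectivity and $C$-projectivity for every $C\geq 1$. Under conjugation by $\theta=R_*$ the left $(\TCQ,\rhd)$-module $\TCQ$ is carried onto the right $(\TCQ,\lhd)$-module $\TCQ$, so the equivalence $(1)\Leftrightarrow(2)$ of Theorem \ref{t:projectivity} yields $(1)\Leftrightarrow(2)$ of the present statement, discreteness of $\G$ being plainly invariant under $R$. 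Conjugating instead by $\theta=R_*$ on $\LOQ$ transfers ``$\LOQ$ is $1$-projective in $\LOQ-\mathbf{mod}$'' to ``$\LOQ$ is $1$-projective in $\mathbf{mod}-\LOQ$'', so $(2)\Rightarrow(3)$ follows by the chain: $(2)$ implies $\G$ is discrete, which implies $(3)$ of Theorem \ref{t:projectivity}, which implies $(3)$. Equivalently, one re-runs the proof of $(2)\Rightarrow(3)$ of Theorem \ref{t:projectivity} with $\rhd$, $\lhd$ and the module sides interchanged, invoking the right-module version of Proposition \ref{p:rel+proj}.

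It remains, in the regular case, to prove $(3)\Rightarrow(1)$; here the plan is to conjugate the corresponding part of the proof of Theorem \ref{t:projectivity} by $R$, the key point being that regularity of $\G$ is intrinsic and hence unchanged. Concretely, one works with the $\rhd$-action of $\LOQ$ on $\BLTQ$ in place of the $\lhd$-action; uses that $\mc{K}(\LTQ)$ is then an essential $\LOQ$-submodule of $\BLTQ$ (from \cite[Corollary 3.6]{HNR4}, or by conjugating the $\lhd$-statement with $R$, noting $R(\mc{K}(\LTQ))=\mc{K}(\LTQ)$); uses the completely contractive module lifting of $\id_{\LOQ}$ provided by $1$-projectivity together with the complete quotient morphism $\pi\colon(\TCQ,\rhd)\to\LOQ$; and uses semidiscreteness of $\LIQ$ — which holds because $\LIQ$, being the dual of a $1$-projective module, is a $1$-injective operator space and $\McbQr\cong C_u(\G)^*$ by Corollary \ref{B(G)=McbA(G)} (applicable since the same $1$-injectivity forces $\h{\G}$ amenable by Theorem \ref{t:amenabilityleftinjectivity}). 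Averaging the resulting normal finite-rank maps into multipliers, and using that $\LOQ$ is a two-sided ideal in $C_u(\G)^*$, produces a bounded approximate identity for $\LOQ$, hence co-amenability of $\G$; extracting a one-sided identity and symmetrising with $R$ as in Theorem \ref{t:simplified} gives a two-sided identity for $\LOQ$, i.e.\ $\G$ is discrete. I expect the only delicate point to be the left-versus-right bookkeeping in this last step: one must check that each ingredient of the regular part of Theorem \ref{t:projectivity} — essentiality of $\mc{K}(\LTQ)$, the induced-module identification $\mc{T}(\LTQ)\cong\mc{CB}_{\LOQ}(\mc{K}(\LTQ),\LIQ)$, and the multiplier-averaging — possesses a consistently-sided analogue, which the anti-isomorphism $R_*$ guarantees but should be spelled out carefully.
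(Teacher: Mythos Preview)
Your proposal is correct and takes essentially the same approach as the paper: the paper gives no separate proof of Theorem~\ref{t:rightprojectivity}, merely stating that it is the right-module analogue of Theorem~\ref{t:projectivity}, and your argument makes this symmetry precise by conjugating with the extended unitary antipode $R$ (or, equivalently, re-running the proof with $\rhd$ and $\lhd$ and the module sides interchanged). The bookkeeping you flag as delicate does go through, since $\Gam^r$ maps $\BLTQ$ into $\BLTQ\oten\LIQ$, so $\TCQ$ carries a genuine right $\LOQ$-module structure for which $\pi:(\TCQ,\rhd)\to\LOQ$ is a complete quotient morphism, and the remaining ingredients transfer under $R$ as you indicate.
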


\begin{remark} It is not clear at this time whether we can replace relative 1-projectivity of $\TCQ$ with 1-projectivity of $\TCQ$ in condition (2) of Theorems \ref{t:projectivity} and \ref{t:rightprojectivity}. However, one \e{cannot} replace 1-projectivity of $\LOQ$ with relative 1-projectivity of $\LOQ$ in condition (3) of Theorems \ref{t:projectivity} and \ref{t:rightprojectivity}, as, for example, $\LO$ is always relatively 1-projective for any locally compact group $G$ (see \cite[Theorem 2.4]{DP}).\end{remark}

\begin{remark} Combining \cite[Theorem 3.12]{Blech} with \cite[Corollary 7]{Kal}, it follows that a regular quantum group $\G$ is discrete if and only if $\LOQ$ is a 1-projective operator space. Theorem \ref{t:projectivity} therefore yields the following equivalence for regular quantum groups: $\LOQ$ is 1-projective in $\C-\mathbf{mod}$ if and only if $\LOQ$ is 1-projective in $\LOQ-\mathbf{mod}$.\end{remark}



\end{document}